\newtheorem{Th}{Theorem}[section]
\newtheorem{Lem}[Th]{Lemma}
\newtheorem{Cor}[Th]{Corollary}
\newtheorem{Prop}[Th]{Proposition}
\newtheorem{Def-Prop}[Th]{Definition-Proposition}
\newtheorem{problem}[Th]{Problem}
\theoremstyle{definition}
\newtheorem{Def}[Th]{Definition}
\newtheorem{Exa}[Th]{Example}
\newtheorem{Rem}[Th]{Remark}
\newcommand{\Z}{\mathbb{Z}}
\newcommand{\C}{\mathbb{C}}
\newcommand{\al}{\alpha}
\newcommand{\la}{\lambda}
\newcommand{\om}{\omega}
\newcommand{\eps}{\varepsilon}
\newcommand{\qu}[1]{\quad{\text{#1}}\quad}
\newcommand{\wh}[1]{\widehat{#1}}
\newcommand{\scal}[2]{\langle#1,#2\rangle}
\newcommand{\se}{\mathsf{e}}
\newcommand{\sv}{\mathsf{v}}
\newcommand{\sx}{\mathsf{x}}
\newcommand{\sE}{\mathsf{E}}
\newcommand{\sI}{\mathsf{I}}
\newcommand{\sV}{\mathsf{V}}
\newcommand{\fs}{\mathsf{s}}
\newcommand{\wP}{\widetilde{\mathcal{P}}}
\newcommand{\hmu}{\widehat{\mu}}
\newcommand{\hla}{\widehat{\la}}
\newcommand{\cC}{\mathcal{C}}
\newcommand{\cP}{\mathcal{P}}
\newcommand{\bX}{\boldsymbol{X}}
\newcommand{\ba}{\boldsymbol{a}}
\newcommand{\bmu}{\boldsymbol{\mu}}
\newcommand{\bnu}{\boldsymbol{\nu}}
\newcommand{\bm}{\boldsymbol{m}}
\newcommand{\bk}{\boldsymbol{k}}
\newcommand{\tc}{{\tilde{c}}}
\newcommand{\tb}{{\tilde{b}}}
\newcommand{\w}{\mathrm{w}}
\newcommand{\hw}{\mathrm{hw}}
\newcommand{\sh}{\mathrm{sh}}
\newcommand{\wt}{\text{\rm wt}}
\newcommand{\fg}{\mathfrak{g}}
\DeclareMathOperator{\Char}{char}
\newcommand{\lra}{\longrightarrow}
\newcommand{\indic}{\mathds{1}}
\definecolor{asse}{RGB}{3,192,94}
\newcommand{\de}{\dot{e}}
\newcommand{\df}{\dot{f}}
\newcommand{\mJ}{\mathrm{J}}
\newcommand*\samethanks[1][\value{footnote}]{\footnotemark[#1]}
\let\svthefootnote\thefootnote
\newcommand\freefootnote[1]{%
  \let\thefootnote\relax%
  \footnotetext{#1}%
  \let\thefootnote\svthefootnote%
}
\begin{document}

\sloppy

\title{Generalised Howe duality
and injectivity of induction: the symplectic case}
\author{
Thomas Gerber\thanks{\'Ecole Polytechnique F\'ed\'erale de Lausanne, Route cantonale, 1015 Lausanne, Switzerland. Supported by an \textit{Ambizione} fellowship of the Swiss National Science Foundation.
Email address: {\tt thomas.gerber@epfl.ch}
},
J\'er\'emie Guilhot\thanks{Institut Denis Poisson, Universit\'e de Tours, Parc de Grandmont, 37200 Tours, France.
Email addresses: {\tt jeremie.guilhot@lmpt.univ-tours.fr}, {\tt cedric.lecouvey@lmpt.univ-tours.fr}}
and
C\'edric Lecouvey\samethanks
}
\maketitle
\freefootnote{}


\hrule 

\begin{abstract}

We study the symplectic Howe duality using two new and independent combinatorial methods:
via determinantal formulae on the one hand, and via (bi)crystals on the other hand.
The first approach allows us to establish
a generalised version where weight multiplicities are replaced by
branching coefficients.
In turn, this generalised Howe duality is used to prove the
injectivity of induction for Levi branchings as previously
conjectured by the last two authors.
\end{abstract}

\hrule

\section{Introduction}

Let $n$ and $m$ be two positive integers and let $\mu$ be a partition whose
Young diagram is contained in the rectangle~$n\times m$ with $n$ rows and $m$
columns. Then the conjugate partition~$\mu ^{\prime }$ of $\mu $ can be
written~$\mu ^{\prime }=(\mu _{1}^{\prime },\ldots ,\mu _{m}^{\prime })$.\ A
classical result in the representation theory of linear Lie algebras
states that the multiplicitiy of the irreducible $\mathfrak{gl}_{n}(\mathbb{C})$-module
 $V(\la)$ in the tensor product 
\begin{equation*}
{\sf \Lambda} _{n,m}^{\mu ^{\prime }}={\sf \Lambda}^{\mu'_1}(\C^{n})\otimes \ldots \otimes {\sf \Lambda}^{\mu'_m}(\C^{n})
\end{equation*}%
is equal to the dimension of the $\mu'$-weight space
in the irreducible $\mathfrak{gl}_{m}(\mathbb{C})$-modules $V(\la')$.\ 
There exist
numerous proofs of this duality in type $A$ (sometimes referred as the Schur
duality in the literature) mostly based on computations on Schur functions
or on purely combinatorial arguments using semistandard tableaux~\cite
{FH91}.

\medskip

For the other classical Lie algebras (or classical Lie groups), there exist
similar constructions due to Howe~\cite{H95}. In this paper,
we restrict ourselves to the symplectic case (i.e. to the root systems of type 
$C$).\ Then, the Howe duality states that the multiplicity of the irreducible $%
\mathfrak{sp}_{2n}(\mathbb{C})$-module $V(\la)$ in the tensor product 
\begin{equation}
{\sf \Lambda} _{2n,m}^{\mu ^{\prime }}={\sf \Lambda} ^{\mu _{1}^{\prime }}(\mathbb{C}^{2n})
\otimes \cdots \otimes {\sf \Lambda} ^{\mu _{m}^{\prime }}(\mathbb{C}^{2n})
\label{TensorHowe}
\end{equation}%
is equal to the dimension of the $\wh{\mu}$-weight space
in the irreducible $\mathfrak{gl}_{m}(\mathbb{C})$-module $V(\wh{\la})$. 
In contrast with the type $A$ case, the conjugate partitions $\mu'$ and $\la'$ are here replaced by the partitions $\wh{\mu}$ and $\wh{\la}$ defined as the conjugates of the 
complements of $\mu$ and $\la$ in the rectangle $n\times m$.

\medskip

The goal of this paper is three-fold. Firstly, we give a simple combinatorial
proof of the Howe duality in the symplectic case based on the determinantal formulae for Weyl characters\ (analogue to the Jacobi-Trudi formulae for
the Schur polynomials). The tools and computations that we use in our proof
generalise those developed in \cite{Le06} and extend naturally to the case where the
fundamental $\mathfrak{gl}_{2n}(\mathbb{C})$-modules appearing in the tensor
products~(\ref{TensorHowe}) are replaced by tensor products of simple $%
\mathfrak{gl}_{2n}(\mathbb{C})$-modules (restricted to $\mathfrak{sp}_{2n}(\mathbb{C})$) or 
simple~$\mathfrak{sp}_{2n}(\mathbb{C})$-
modules.\ We prove that the corresponding tensor product multiplicities are
equal this time to branching coefficients corresponding to the
restriction of the simple $\mathfrak{sp}_{2m}(\mathbb{C})$-modules to a block
diagonal subalgebras $\mathfrak{s}$ of~$\mathfrak{sp}_{2m}(\mathbb{C})$.
This means that we need to consider restrictions to subalgebras $\mathfrak{s}%
\simeq \mathfrak{g}_{1}\oplus \cdots \oplus \mathfrak{g}_{r}$ where each $%
\mathfrak{g}_{i}$ is a Lie algebra isomorphic to $\mathfrak{sp}_{2m_{i}}$ or 
$\mathfrak{gl}_{m_{i}}$ with $m_{1}+\cdots +m_{r}=m$. This generalises the original Howe
duality which corresponds to the case~$r=m$ and  $m_{i} = 1 $ for all $i$.

\medskip

Secondly, the previous Schur and Howe dualities can be generalised when $\mu'$
is replaced by any $m$-tuple $\beta $ of nonnegative integers.\ Then the
spaces
\begin{equation*}
{\sf \Lambda}_{n,m}^{A}=\bigoplus_{\substack{\beta=(\beta_{1},\ldots,\beta_{m}
		)\in\mathbb{Z}_{\geq0}^{m}\\\text{\textrm{max}}(\beta)\leq n}}
		{\sf \Lambda}_{n,m}^{\beta}\text{\quad and \quad}
{\sf \Lambda}_{2n,m}^{C}=\bigoplus
_{\substack{\beta=(\beta_{1},\ldots,\beta_{m})\in\mathbb{Z}_{\geq0}
		^{m}\\\text{\textrm{max}}(\beta)\leq2n}}{\sf \Lambda}_{2n,m}^{\beta}
\end{equation*}
admit a structure of $\mathfrak{gl}_{n}\times \mathfrak{gl}_{m}$-bimodule and of 
$\mathfrak{sp}_{2n}\times \mathfrak{sp}_{2m}$-bimodule respectively. It is well known
 that a lot of information about simple modules associated
to a simple Lie algebras is encoded by particular combinatorial structures studied by Lusztig, Kashiwara and Littelmann: their crystal graph. 
It then makes sense to look for bicrystal structures associated to ${\sf \Lambda}
_{n,m}^{A}$ and ${\sf \Lambda} _{2n,m}^{C}$. The second main result of the paper uses
the combinatorial duality techniques developed in \cite{GL20} to get a
simple bijection between the highest weight vertices in the $\mathfrak{sp}_{2n}$-crystal
$B({\sf \Lambda} _{2n,m}^{C})$ 
associated to ${\sf \Lambda} _{2n,m}^{C}$ and
the King tableaux (a particular model of tableaux counting the weight
multiplicities in type $C_{m},$ see \cite{K76}).\ 
This is reminiscent of a result by Lee \cite{Lee19}
expressed in a different combinatorial language.\ 
In contrast to \cite{Lee19}, 
where a type $\mathfrak{sp}_{2n}\times \mathfrak{sp}_{2m}$-bicrystal
structure is proposed, we then study the action of the type $A_{2m-1}$-crystal operators on $B({\sf \Lambda}_{2n,m}^{C})$.\ 
These are indeed the operators which are in connection with the charge statistics defined in \cite%
{LL18}. In particular, the actions of the $A_{2m-1}$-crystal operators
associated to nodes with unbarred label correspond to contraction operations
on columns of type $C_{n}$ in ${\sf \Lambda}_{2n,m}^{C}$ whereas the action
corresponding to nodes with barred label yield jeu de taquin operations on the
positive or negative part of these columns. This leads to an intriguing statistics on tensor products of type $C_n$ columns 
which does not coincide with the intrinsic energy defined from their affine crystal structure.

\medskip

Finally, our third objective is to use the generalised Howe duality to prove
a conjecture by the last two authors. Consider  a Levi
subalgebra $\mathfrak{l}$ of $\mathfrak{sp}_{2m}(\mathbb{C})$ and $\nu ^{(1)},\nu ^{(2)}$
two dominant weights for $\mathfrak{l}$. The conjecture claims that the two $%
\mathfrak{sp}_{2m}(\mathbb{C})$-modules obtained by induction from $\nu
^{(1)}$ and $\nu ^{(2)}$ are isomorphic if and only if $\nu ^{(1)}$ and $\nu
^{(2)}$ coincide up to an automorphism of the Dynkin diagram associated to $%
\mathfrak{l}$ (or equivalently up to permutation of the components in $\nu
^{(1)}$ and $\nu ^{(2)}$ associated to isomorphic simple subalgebras). It was 
proved in \cite{GL} under restrictive conditions on $\nu ^{(1)}$ and $\nu^{(2)}$. Here we
prove this conjecture in full generality and obtain in fact a more general result in which $\mathfrak{l}$ 
can also be replaced by any direct sum $\mathfrak{s}$ of subalgebras of type $C$.\
The proof uses our generalised Howe duality and some elegant results by Rajan \cite%
{Ra14} on the irreducibility of Weyl characters.

\medskip

Most of the techniques and results developed in this paper can be extended
to the orthogonal types.\ There are nevertheless complications due to the
existence of the spin representations and the lack of a natural analogue of
King tableaux in the duality context which is relevant for the paper. This
will be addressed in \cite{GGL}. We also tried to make the paper more accessible by starting with proofs of some known results using methods 
that will be central for the generalisations that we propose here and in \cite{GGL}.

\medskip 

The paper is organised as follows. In Section 2, we review some well-known results on the combinatorics of root systems and Lie algebras, mainly to set up the notations that we will use. In Section 3, we use the Jacobi-Trudi
formula for Schur functions to (re)prove the Schur duality. This allows us to
introduce the main tools and methods which will be reinvested in Section 4
where Howe duality is derived similarly from determinantal identities for
the Weyl characters of type $C_{n}$. In Section 5, we use crystals to define a natural
combinatorial duality between highest weight vertices in $B({\sf \Lambda} _{2n,m}^{C})$ and King tableaux, thereby giving a bijective proof of the Howe duality. 
We then study in Section 6 the behavior of $B({\sf \Lambda} _{2n,m}^{C})$ under some crystal operators of type $A_{2m-1}$. 
Section 7 is devoted to establishing the generalised Howe duality using determinantal techniques similar to that of Section 4.
Finally, we prove the generalised version of the conjecture of \cite{GL} in Section 8.


\section{Generalities and settings}
\label{settings}
Let $\frak{g}$ be a Lie algebra with root system $R$ with triangular decomposition 
\begin{equation*}
\mathfrak{g=}\bigoplus\limits_{\alpha \in R^+}\mathfrak{g}_{\alpha }\oplus 
\mathfrak{h}\oplus \bigoplus\limits_{\alpha \in R^+}\mathfrak{g}_{-\alpha }
\end{equation*}%
so that $\mathfrak{h}$ is the Cartan subalgebra of $\mathfrak{g}$ and $R^{+}$
its set of positive roots. 

\medskip

We assume that $R$ is realised in a real Euclidean space $V$ with inner product $\langle\cdot,\cdot \rangle$. Let $\Delta$ be the simple system associated to $R$.
For $\al\in R$, let $\al^\vee = \dfrac{2\al}{\langle \al,\al\rangle}$ be the coroot associated to $\al$.  The Weyl group $W$ of $R$ is the group generated by the orthogonal reflections $s_{\al}$ with respect to the hyperplanes 
$$H_{\al} := \{x\in V\mid \scal{x}{\al^\vee} = 0\}.$$
The group $W$ is a finite Coxeter group with distinguished set of generators $S = \{s_\al\mid \al\in \Delta\}$. 
The closure of the connected components of the set $V\setminus \cup_{\al\in R} H_\al$ are called the Weyl chambers. The fundamental Weyl chamber is defined by 
$${\cal C}_0 = \{x\in V\mid \scal{x}{\al}> 0 \text{ for all $\al\in R^+$}\}.$$
For all $\al\in \Delta$, we define the fundamental weight $\om_\al\in V$ by the relations 
$$\scal{\om_\al}{\beta^\vee} = \delta_{\al,\beta}\quad\text{for all $\beta\in \Delta$}. $$
Let $P =\oplus_{\al\in \Delta} \Z \om_{\al}$ be the weight lattice and $P^+$ be the set of dominant weights:
$$P^+ =\{\la\in P\mid \scal{\la}{\al^\vee}\geq 0 \text{ for all $\al\in \Delta$}\} = P\cap {\cal C}_0.$$

\begin{Exa}[Root system of type $A$] 
\label{type_A}
Let $E$ be the Euclidean space of dimension $m$ with basis $(\eps_1,\ldots,\eps_m)$ and let $V$ be the quotient  of $E$ by the subspace generated by $\sum_{i=1}^m\eps_i$. The space $V$ is of dimension $m-1$ and can be endowed with an euclidean structure as it is isomorphic to the hyperplane of $E$ orthogonal to $\eps_1+\cdots+\eps_m$.
The set 
$$R^+ = \{\eps_{i}-\eps_j\mid i,j\in \{1,\ldots,m\}, i<j\}$$
is a positive root system of type $A_{m-1}$. For all $i\in \{1,\ldots,m-1\}$ we set $\al_i = \eps_i- \eps_{i+1}$ and we denote by $s_i$ the reflection with respect to $\al_i$. The simple system associated to $R_+$ is 
$$\Delta = \{\al_i\mid i=1,\ldots,m-1\}.$$
The reflection $s_{\eps_i-\eps_j}$ acts on $V$ by permuting the $i$-th and $j$-th coordinate. We have
$${\cal C}_0 = \{(x_1,\ldots,x_m)\in V\mid x_i - x_{i+1}\geq 0 \text{ for all $i$}\}.$$
For all $i\in \{1,\ldots,m-1\}$, the fundamental weight associated to $\al_i$ is $\om_i = \eps_1+\ldots+\eps_i$ and 
$$P^+ = \{(\la_1,\ldots,\la_m)\in \Z^m\mid \la_i - \la_{i+1}\geq 0 \text{ for all $i$}\}.$$
The set $P^+$ is in bijection with the set of partition of length at most $m-1$. Indeed 
$$(\la_1,\ldots,\la_m) = (\la_{1}-\la_m,\ldots,\la_{m-1} -\la_m,0) \text{ in $V$}.$$

\end{Exa}

\begin{Exa}[Root system of type $C$] 
\label{type_C}Let $V$ be the Euclidean space of dimension $m$ with basis $(\eps_1,\ldots,\eps_m)$.
The set 
$$R^+ = \{\eps_{i}-\eps_j\mid i,j\in \{1,\ldots,m\}, i<j\} \cup \{2\eps_i\mid i =1,\ldots,m\}$$
is a positive root system of type $C_{m}$. The associated simple system is 
$$\Delta = \{\al_i\mid i=1,\ldots,m-1\}\cup  \{2\eps_m\}\qu{where} \al_i = \eps_i - \eps_{i+1}.$$
We denote by $s_i$ the reflection $s_{\al_i}$ and by $s_m$ the reflection $s_{2\eps_m}$. 
Note that $s_{\eps_i-\eps_j}$ acts on $V$ by permuting the $i$-th and $j$-th coordinate and the reflection $s_{2\eps_i}$ changes the sign of the $i$-th coordinate. 
We have
$${\cal C}_0 = \{(x_1,\ldots,x_m)\in V\mid x_i - x_{i+1}\geq 0 \text{ for all $i$ and } x_m\geq 0\}.$$
and 
$$P^+ = \{(\la_1,\ldots,\la_m)\in \Z^m\mid \la_i - \la_{i+1}\geq 0 \text{ for all $i$ and }\la_m \geq 0\}.$$
The set $P^+$ is in bijection with the set of partitions of length at most $m$. 
\end{Exa}

\medskip

We now turn to the representation theory of $\mathfrak{g}$.
The \textit{representation ring} of $\fg$ is the ring with basis indexed by the isomorphism classes $[V]$ of irreducible representations $V$ of $\fg$ over $\C$. 
The addition is defined such that $[V] + [V'] =[V'']$ whenever $V''\simeq V\oplus V'$ and the multiplication is defined by $[V]\times [W]= [V\otimes W]$. 
We will denote it by $\mathcal{R}(\fg)$.
\medskip

Let $\Z[P]$ be the integral group ring on the abelian group $P$. We will write $e^{\la}$ for the element associated to $\la\in P$ so that we have $e^{\la}\cdot e^{\la'} = e^{\la+\la'}$. The Weyl group $W$ of $R$ acts naturally on $\Z[P]$ by setting $w\cdot e^{\la} = e^{w(\la)}$.  We denote by $\Z[P]^W$ the set of fix points:
$$\Z[P]^{W} = \{f\in \Z[P] \mid w\cdot f = f\}.$$ Let $\Char$ be the injective ring homomorphism  from the representation ring of $\fg$ to $\Z[P]$  defined by 
$$\Char([V]) = \sum \dim(V_\mu) e^{\mu}$$
where $V_\mu$ is the $\mu$-weight space in $V$. 
For $\la\in P$, let $V(\la)$ be the irreducible module of highest weight $\la$ in~$\frak{g}$. 
\begin{Rem}
\label{notation}
In this paper, we will deal with modules for Lie algebras of type $A$ or $C$ of various rank. When necessary, we will add a superscript to the notation $V(\la)$ to indicate the type and the rank of the Lie algebra we are working with. For instance, we will write $V^{C}(\la)$ for the irreducible module of highest weight $\la$ in a Lie algebra of type $C$ and $V^{C_m}(\la)$ if we further want to indicate the rank $m$ of the Lie algebra. 
\end{Rem}
\begin{Th}[{\cite[Theorem 23.24]{FH91}}]
The ring $\mathcal{R}(\fg)$ is a polynomial ring in the variables $(\Char([V(\om_\al)]))_{\al\in \Delta}$. The homomorphism $\Char:\mathcal{R}(\fg)\rightarrow \Z[P]^W$ is an isomorphism. 
\end{Th}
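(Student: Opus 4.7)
The plan is to prove the two statements in reverse order: first establish that $\Char$ is a $\Z$-module isomorphism $\mathcal{R}(\fg) \xrightarrow{\sim} \Z[P]^W$, and then deduce the polynomial ring structure by a triangularity argument on dominant weights.

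First I would check that $\Char$ takes values in $\Z[P]^W$. This reduces to the classical fact that for any finite-dimensional $\fg$-module $V$ the Weyl group acts on the set of weights, with $\dim V_\mu = \dim V_{w\mu}$ for all $w \in W$. For the homomorphism properties on $\mathcal{R}(\fg)$, additivity is clear and multiplicativity $\Char([V \otimes W]) = \Char([V]) \cdot \Char([W])$ follows from $(V \otimes W)_{\la} = \bigoplus_{\mu + \nu = \la} V_\mu \otimes W_\nu$. Injectivity then follows because by complete reducibility $\mathcal{R}(\fg)$ is free as a $\Z$-module with basis $\{[V(\la)] \mid \la \in P^+\}$, and the irreducibles are distinguished by their highest weights, hence by their characters.

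Next I would prove surjectivity. The natural $\Z$-basis of $\Z[P]^W$ is given by the orbit sums $m_\la = \sum_{\mu \in W\la} e^\mu$ for $\la \in P^+$. The key step is the triangularity relation
\begin{equation*}
\Char([V(\la)]) = m_\la + \sum_{\substack{\mu \in P^+ \\ \mu < \la}} a_{\la,\mu}\, m_\mu, \qquad a_{\la,\mu} \in \Z_{\geq 0},
\end{equation*}
where $<$ denotes the dominance order on $P^+$. This relies on two standard facts from highest weight theory: $\la$ is a weight of $V(\la)$ with multiplicity one, and every weight $\mu$ of $V(\la)$ satisfies $\mu \leq \la$ in dominance order (its $W$-conjugates lie in the convex hull of $W\la$). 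Since the change-of-basis matrix between $\{\Char([V(\la)])\}_{\la \in P^+}$ and $\{m_\la\}_{\la \in P^+}$ is unitriangular with integer entries, the former is also a $\Z$-basis of $\Z[P]^W$, giving the isomorphism.

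Finally, for the polynomial ring statement, write $\la = \sum_{\al \in \Delta} n_\al \om_\al \in P^+$ and consider
\begin{equation*}
\bigotimes_{\al \in \Delta} V(\om_\al)^{\otimes n_\al}.
\end{equation*}
Its highest weight is $\la$ with multiplicity one (every vector obtained by tensoring highest weight vectors has weight $\la$, and no larger weight can appear). Thus, again by the dominance order triangularity on irreducible decompositions,
\begin{equation*}
\prod_{\al \in \Delta} \Char([V(\om_\al)])^{n_\al} = \Char([V(\la)]) + \sum_{\mu < \la} b_{\la,\mu}\, \Char([V(\mu)])
\end{equation*}
with $b_{\la,\mu} \in \Z_{\geq 0}$. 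Since $\{\Char([V(\la)])\}_{\la \in P^+}$ is a $\Z$-basis of $\mathcal{R}(\fg)$, this unitriangular transition shows that the monomials $\prod_\al \Char([V(\om_\al)])^{n_\al}$ also form a $\Z$-basis of $\mathcal{R}(\fg)$. Consequently the $\Char([V(\om_\al)])$ are algebraically independent and generate $\mathcal{R}(\fg)$ as a polynomial ring. The main technical obstacle is the dominance order triangularity for $\Char([V(\la)])$ versus orbit sums, which is the classical input from highest weight theory; once established, both conclusions follow by elementary change-of-basis arguments.
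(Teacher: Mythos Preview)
Your proof is correct and follows the standard approach (triangularity against orbit sums for the isomorphism, then triangularity of products of fundamental characters for the polynomial structure). Note, however, that the paper does not give its own proof of this statement: it is quoted as a background result with a reference to \cite[Theorem 23.24]{FH91}, so there is no in-paper argument to compare against. Your argument is essentially the one found in Fulton--Harris.
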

Let $\rho$ be the half sum of positive roots. Then the Weyl character formula asserts that, for $\la\in P^+$, the character of $V(\la)$ is
$$\fs_{\la} = \dfrac{a_{\la+\rho}}{a_\rho}\qu{where} a_{\la} = \sum_{w\in W}\eps(w)e^{w\la}.$$
The set $\{\fs_{\la}\mid \la\in P^+\}$ is a basis of $\Z[P]^W$. The Kostant partition function ${\cal P}$ is defined by the formula
$$\dfrac{1}{\prod_{\al\in R^+}(1-e^\al)} = \sum_{\beta\in \Z^m} \mathcal{P}(\beta)e^\beta.$$
The dot action of the Weyl group on $P$ is defined by 
$$w\circ \la = w(\la+\rho) - \rho = t_{-\rho}wt_{\rho} (\la),$$
where, for all $\gamma\in P$, $t_\gamma$ denotes the translation by $\gamma$.
\begin{Th}[Kostant mutliplicity formula]\label{Kostant}
Let $\la,\mu\in P^+$. Let $K_{\la,\mu}$ be the dimension of the $\mu$-weight space in the irreducible representation $V(\la)$ of highest weight $\la$. We have 
$$K_{\la,\mu} = \sum_{w\in W} \eps(w)\cP(w\circ \la - \mu).$$
\end{Th}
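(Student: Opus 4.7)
My plan is to derive Kostant's formula directly from the Weyl character formula by expanding $1/a_\rho$ via the generating function of $\cP$ and then reading off the coefficient of $e^\mu$. Starting from $\fs_\la=a_{\la+\rho}/a_\rho$ together with the definition $\fs_\la=\sum_\mu K_{\la,\mu}e^\mu$, the problem reduces entirely to expanding the rational expression $a_{\la+\rho}/a_\rho$ as a $\Z$-linear combination of exponentials.

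To do this I would invoke the standard Weyl denominator identity $a_\rho=e^\rho\prod_{\al\in R^+}(1-e^{-\al})$. Substituting $\al\mapsto-\al$ in the defining series $\dfrac{1}{\prod_{\al\in R^+}(1-e^\al)}=\sum_\beta\cP(\beta)e^\beta$ yields
$$\frac{1}{\prod_{\al\in R^+}(1-e^{-\al})}=\sum_\beta\cP(\beta)\,e^{-\beta},$$
and hence $\dfrac{1}{a_\rho}=e^{-\rho}\sum_\beta\cP(\beta)\,e^{-\beta}$. Expanding $a_{\la+\rho}=\sum_{w\in W}\eps(w)e^{w(\la+\rho)}$ and using the dot action $w\circ\la=w(\la+\rho)-\rho$, the product becomes
$$\fs_\la=\sum_{w\in W}\sum_\beta\eps(w)\,\cP(\beta)\,e^{w\circ\la-\beta}.$$
Comparing this with $\sum_\mu K_{\la,\mu}e^\mu$ and setting $\beta=w\circ\la-\mu$ gives exactly $K_{\la,\mu}=\sum_{w\in W}\eps(w)\,\cP(w\circ\la-\mu)$.

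The only delicate point is that $1/a_\rho$ does not lie in $\Z[P]$ but in a completion along the cone of non-negative combinations of positive roots, so the intermediate manipulations must be carried out in that completion. However, for fixed $\la$ and $\mu$, only finitely many $w\in W$ make $w\circ\la-\mu$ land in the support of $\cP$, so the resulting sum is finite and the identity descends safely to the coefficient of $e^\mu$ in the Laurent polynomial $\fs_\la$. The main ``obstacle'', then, is really just this bookkeeping check on the formal series; everything else is a direct algebraic consequence of the Weyl character formula, the denominator identity, and the definition of $\cP$.
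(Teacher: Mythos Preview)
Your derivation is correct and is precisely the standard proof of Kostant's multiplicity formula from the Weyl character formula and the Weyl denominator identity. There is nothing to compare against here: the paper states this theorem as a classical result without proof, using it only as an input for later arguments (e.g.\ in the proof of \Cref{schur_duality}); your argument supplies the omitted justification in the expected way.
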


\begin{Exa} 
Assume that $\fg=\mathfrak{sl}_{n}(\C)$.  Then the root system  $R$ of $\fg$  is of type $A_{n-1}$ as in Example \ref{type_A}, and we have $W=\mathfrak{S}_n$. 
We set $x_i = e^{\eps_i}\in \Z[P]$. Since we realised $R$ in the quotient space $V$ we have the relation $x_1\cdots x_n = 1$ in $\Z[P]$. As a consequence, one can show that
$$\Z[P]^W\simeq \Z[x_1,\ldots,x_n]^{\mathfrak{S}_n}\slash \langle x_1\cdots x_n - 1\rangle$$
where $\Z[x_1,\ldots,x_n]^{\mathfrak{S}_n}$ is the ring of symmetric polynomials in $n$ variables. The representations $V(\om_k)$ for~$k=1,\ldots,n-1$ are isomorphic to ${\sf \Lambda}^{k}(\C^{n})$, the $k$-th exterior power of the natural representation $\C^n$ of $\fg$. Then we have $\Char(V(\om_k)) = \se_k(x_1,\ldots,x_n)\in \Z[P]^W$ where $\se_k$ is $k$-th elementary symmetric function in $n$ variables:
$$\se_i(x_1,\ldots,x_n) = \sum_{1\leq i_1<i_2<\ldots<i_k\leq n} x_{i_1}\cdots x_{i_k}.$$
\end{Exa}


\section{Schur duality in type A}
In this section, we prove the Schur duality for $\mathfrak{sl}_n(\C)$ using the same methods that we will use to prove the Howe duality in type $C$. The root system associated to $\mathfrak{sl}_n(\C)$ is of type $A_{n-1}$ and we keep the notations of Example \ref{type_A} and Section \ref{settings}. 

\medskip

For all integers $n,m\in\Z_{\geq 1}$, we denote by $\cP_n$ the set of partitions 
of the form $(\la_1,\ldots,\la_n)$,
and by $\cP_{n,m}$ the set of partitions in $\cP_n$ such that $\la_1\leq n$
(i.e. the Young diagram of a partition in $\cP_{n,m}$ is included in a rectangle with sides $n\times m$). 
For any partition $\la\in \cP_{n,m}$, we denote by $(\la_1,\ldots,\la_n)$ the parts of~$\la$. The conjugate partition is denoted by $\la'$ and $\la'= (\la'_1,\ldots,\la'_m)$. Note that $\la\in \cP_{n,m}$ if and only if $\la'\in \cP_{m,n}$.
 
\medskip
  
We have seen in Example \ref{type_A}  that the set of dominant weights for a root system of type $A_{n-1}$ is in bijection with $\cP_{n-1}$. We will freely identify those sets. 
 
\medskip

For $N\in\Z_{\geq 1}$ and  $\gamma,\lambda\in \cP_N$ we write $K^{A_N}_{\gamma,\lambda}$ for the dimension of the $\gamma$-weight space in the irreducible $\mathfrak{sl}_N(\C)$-module $V(\lambda)$ of highest weight $\lambda$. For any $\mathfrak{sl}_N(\C)$-module $M$, we write $[M:V(\lambda)]$ for the multiplicity of $V(\lambda)$ in~$M$.

\begin{Th}[Schur duality]\label{schur_duality}
Let $\mu\in \cP_{n,m}$ and let $\mu' = (\mu'_1,\ldots,\mu'_m)$. For all $\la\in \cP_{n-1}$, we have
$$\left[{\sf \Lambda}^{\mu'_1}(\C^{n})\otimes \cdots \otimes {\sf \Lambda}^{\mu'_m}(\C^{n}):V(\la)\right] = K^{A_m}_{\la',\mu'}$$
where ${\sf \Lambda}^{k}(\C^{n})$ is the $k$-th exterior power of the natural representation $\C^n$ of $\mathfrak{sl}_n(\C)$.
\end{Th}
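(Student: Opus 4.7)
The plan is to pass to characters and establish a symmetric function identity via the Jacobi-Trudi formula, packaged through the dual Cauchy identity.

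Since $\Char\Lambda^{k}(\C^{n}) = e_{k}(x_{1},\ldots,x_{n})$ and $\Char V(\la) = s_{\la}(x_{1},\ldots,x_{n})$, taking characters on both sides of the theorem reduces the claim to the identity
\begin{equation*}
\prod_{j=1}^{m} e_{\mu'_{j}}(x_{1},\ldots,x_{n}) = \sum_{\la} K^{A_{m}}_{\la',\mu'}\, s_{\la}(x_{1},\ldots,x_{n}).
\end{equation*}
This is the coefficient of the monomial $y_{1}^{\mu'_{1}}\cdots y_{m}^{\mu'_{m}}$ in the dual Cauchy identity
\begin{equation*}
\prod_{i=1}^{n}\prod_{j=1}^{m}(1+x_{i}y_{j}) = \sum_{\la} s_{\la}(x_{1},\ldots,x_{n})\,s_{\la'}(y_{1},\ldots,y_{m}),
\end{equation*}
since the coefficient of $y_{1}^{\mu'_{1}}\cdots y_{m}^{\mu'_{m}}$ in $s_{\la'}(y_{1},\ldots,y_{m})$ equals $K^{A_{m}}_{\la',\mu'}$ by definition of Kostka numbers as weight-space multiplicities.

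To prove the dual Cauchy identity via Jacobi-Trudi, I would multiply both sides by the Weyl denominator $a_{\rho}(x) = \det(x_{i}^{n-j})_{i,j=1}^{n} = \prod_{i<j}(x_{i}-x_{j})$. Since $\prod_{k}(1+x_{i}y_{k}) = \sum_{l\geq 0} e_{l}(y)\,x_{i}^{l}$ depends only on $x_{i}$, distributing it into row $i$ of the Vandermonde gives
\begin{equation*}
a_{\rho}(x)\prod_{i,j}(1+x_{i}y_{j}) = \det\!\left( \sum_{l\geq 0} e_{l}(y)\,x_{i}^{n-j+l}\right)_{i,j=1}^{n}.
\end{equation*}
Expanding the determinant by multilinearity in the columns and sorting each surviving tuple of distinct exponents into strictly decreasing order produces, for every partition $\la$ of length at most $n$, a factor $a_{\la+\rho}(x) = s_{\la}(x)\,a_{\rho}(x)$ times a signed sum over $\sigma\in S_{n}$ of products $\prod_{i}e_{\la_{i}-i+\sigma(i)}(y)$. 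That signed sum is precisely the Jacobi-Trudi determinant $\det(e_{\la_{i}-i+j}(y))_{i,j=1}^{n}$, which evaluates to $s_{\la'}(y)$. Dividing by $a_{\rho}(x)$ yields the dual Cauchy identity.

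The main obstacle is the sign and indexing bookkeeping in the column-sort, matching the permutations of exponents in the Vandermonde with the permutations appearing in the Jacobi-Trudi expansion of $s_{\la'}$; this is standard symmetric function calculus once the right indexing is chosen. I expect this computation to serve as the template for the more intricate type-$C$ Howe duality in Section~4, where the Vandermonde and the Jacobi-Trudi formula are both replaced by their symplectic analogues.
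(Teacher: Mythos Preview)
Your proof is correct: extracting the coefficient of $y^{\mu'}$ from the dual Cauchy identity is a clean and standard route, and your Vandermonde/Jacobi--Trudi derivation of dual Cauchy is fine (the sign bookkeeping you flag is routine).

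However, your route is genuinely different from the paper's, and the difference matters for the rest of the paper. The paper does \emph{not} pass through a Cauchy identity in two alphabets. Instead it introduces a linear map $\sE:\Z[P]\to\Z[P]^{W}$ sending $\sx^{\beta}\mapsto \se_{\beta}$, proves the key identity $\sE(\Delta^{A}_{m}\cdot \sx^{\beta})=\sv_{\beta}$ (the Jacobi--Trudi determinant), and then writes
\[
\sE(\sx^{\mu'})=\sE\Bigl(\Delta^{A}_{m}\cdot\frac{1}{\Delta^{A}_{m}}\cdot \sx^{\mu'}\Bigr)=\sum_{\beta}\cP(\beta)\,\sv_{\beta+\mu'},
\]
expanding $1/\Delta^{A}_{m}$ via the Kostant partition function. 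A straightening lemma $\sv_{w\circ\beta}=\eps(w)\sv_{\beta}$ then regroups the sum so that the Kostant multiplicity formula $K^{A_{m}}_{\gamma,\mu'}=\sum_{w}\eps(w)\cP(w\circ\gamma-\mu')$ appears directly. Your approach buys elegance in type $A$; the paper's buys a mechanism (the $\sE$ map, the $1/\Delta$ trick, and determinant straightening) that transports verbatim to type $C$ in Section~4 and, crucially, to the generalised Howe duality of Section~7, where $\Delta^{C}_{m}$ is replaced by $\Delta_{(\bX,\bm)}$ and the partition function by $\cP_{(\bX,\bm)}$. Your closing remark that your computation will serve as the template for Section~4 is therefore off: a symplectic dual-Cauchy argument could perhaps be made to work for the basic Howe duality, but it is the $\sE$/partition-function/straightening machinery that the paper actually reuses and that carries the generalisation to arbitrary block-diagonal subalgebras.
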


The character of ${\sf \Lambda}^{k}(\C^{n})$ is $\se_k(x_1,\ldots,x_n)$ and so the character of the tensor product 
${\sf \Lambda}^{\mu'_1}(\C^{n})\otimes \ldots \otimes {\sf \Lambda}^{\mu'_m}(\C^{n})$ is  $\se_{\mu'_1}\ldots\se_{\mu'_m}$. If we define $u_{\la,\mu'}$ by the relation
$$\se_{\mu'_1}\ldots\se_{\mu'_m} = \sum_{\la\in \cP_{n-1,m}} u_{\la,\mu'} {\sf s}_\la$$
the theorem states that $u_{\la,\mu'}=K^{A_m}_{\la',\mu'}$ for all $\la\in \cP_{n-1,m}$. 

\medskip

For $\beta = (\beta_1,\ldots,\beta_m)\in \Z^m$ we define the matrix $\sV(\beta)$ by 
$$[\sV(\beta)]_{i,j} = \se_{\beta_i + j - i}$$
where $\se_k = 0$ whenever $k\notin \{0,\ldots,n\}$. We set $\sv_\beta = \det(\sV(\beta))$. More explicitely we have:
 $$\sv_{\beta}  = \left|
\begin{array}{ccccccccc}
\se_{\beta_1} & \se_{\beta_1+1}   &\ldots & \se_{\beta_1+m-1}   \\
\se_{\beta_2 -1} &\se_{\beta_2 }&\ldots & \se_{\beta_2+m-2}   \\
\vdots & \vdots & \ddots & \vdots\\
\se_{\beta_m - m +1} &\se_{\beta_m - m +2}  &\ldots & \se_{\beta_m}  \\ 
\end{array}\right|.
$$
The well-known Jacobi-Trudi formula \cite[Appendix A]{FH91} tells us that for all $\la \in \cP_{n-1}$, we have ${\sf s}_{\la} = \sv_{\la'}$.
 
\begin{Rem}
\label{simplify}
If $\beta = (n,\ldots,n,\beta_k,\ldots,\beta_1) \in \Z^m$ then since $\se_{n}(x_1,\ldots,x_n) = x_1\cdots x_n = 1$ in $\Z[P]^W$ we have 
$$\sv_{\beta} = \sv_{\beta^\dag} \qu{where} \beta^\dag = (\beta_k,\ldots,\beta_1,0,\ldots,0)\in \Z^m.$$
\end{Rem}
\medskip

Given $\beta\in \Z^m$ we set $\sx^\beta = x_1^{\beta_1}\cdots x_m^{\beta_m}$ and $\se_{\beta} = \se_{\beta_1}\cdots \se_{\beta_m}$. We define $\sE$ to be the linear map
$$\begin{array}{ccccc}
\sE : & \Z[P] &\rightarrow & \Z[P]^W\\
&\sx^\beta &\mapsto & \se_{\beta} \end{array}$$
This map can be extended to the set of formal series $\Z[[P]]$. Indeed, there are only finitely many $\beta\in \Z^m$ such that $\se_\beta\neq 0$. The map $\sE$ satisfies the following useful lemma.

\begin{Lem}
\label{Lem_useful} Let $\beta$ and $\gamma$ in $\mathbb{Z}^{m}$ be such that there exists an integer $1\leq s\leq m$ with $\beta_{i}=0$ for $s+1\leq i\leq m$ and
$\gamma_{i}=0$ for $1\leq i\leq s$. Then \footnote{The map $E$ is not a morphism of algebras, the equality only holds when the variables are separated in a given monomial.}
\[
\sE(\sx^{\beta+\gamma})=\sE(\sx^{\beta}\cdot \sx^{\gamma})=\sE(\sx^{\beta})\cdot
\sE(\sx^{\gamma}).
\]

\end{Lem}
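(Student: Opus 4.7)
The plan is to deduce the lemma directly from the monomial-level definition of $\sE$. The first equality $\sE(\sx^{\beta+\gamma})=\sE(\sx^{\beta}\cdot \sx^{\gamma})$ is immediate, since $\sx^{\beta+\gamma}=\sx^{\beta}\cdot \sx^{\gamma}$ holds in $\Z[P]$ by the very definition $\sx^{\delta}=x_1^{\delta_1}\cdots x_m^{\delta_m}$. All the actual content lies in the second equality.

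For that, I would use the hypothesis on supports to write everything out explicitly. Under the assumption, $\sx^{\beta}=x_1^{\beta_1}\cdots x_s^{\beta_s}$ (the remaining exponents being zero) and $\sx^{\gamma}=x_{s+1}^{\gamma_{s+1}}\cdots x_m^{\gamma_m}$, so their product is the single monomial
$$\sx^{\beta}\cdot \sx^{\gamma}=x_1^{\beta_1}\cdots x_s^{\beta_s}\,x_{s+1}^{\gamma_{s+1}}\cdots x_m^{\gamma_m},$$
namely $\sx^{\beta+\gamma}$ with $i$-th exponent equal to $\beta_i$ for $i\leq s$ and to $\gamma_i$ for $i>s$. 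Applying the definition of $\sE$ to this monomial yields
$$\sE(\sx^{\beta}\cdot \sx^{\gamma})=\se_{\beta_1}\cdots \se_{\beta_s}\cdot \se_{\gamma_{s+1}}\cdots \se_{\gamma_m}.$$
On the other hand, using $\se_0=1$ and applying the definition of $\sE$ to $\sx^{\beta}$ and $\sx^{\gamma}$ separately gives $\sE(\sx^{\beta})=\se_{\beta_1}\cdots \se_{\beta_s}$ and $\sE(\sx^{\gamma})=\se_{\gamma_{s+1}}\cdots \se_{\gamma_m}$. Multiplying these two identities in $\Z[P]^W$ yields exactly the right-hand side displayed above, which concludes the argument.

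There is really no obstacle in this proof: the whole point of the disjoint-support hypothesis is to prevent collisions of the type $x_i^a\cdot x_i^b=x_i^{a+b}$, which would produce $\se_a\cdot \se_b$ on one side and $\se_{a+b}$ on the other, these being unequal in general — this is precisely why $\sE$ fails to be an algebra morphism, as the footnote in the statement emphasises. Once the variables are separated, the exponents simply concatenate into a single tuple $(\beta_1,\ldots,\beta_s,\gamma_{s+1},\ldots,\gamma_m)$, and the lemma reduces to a direct reading of the definition.
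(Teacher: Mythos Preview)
Your proof is correct and follows essentially the same approach as the paper: both argue that the first equality is trivial from $\sx^{\beta+\gamma}=\sx^{\beta}\cdot\sx^{\gamma}$, and then use the disjoint-support hypothesis together with $\se_0=1$ to compute each side of the second equality explicitly as $\se_{\beta_1}\cdots\se_{\beta_s}\cdot\se_{\gamma_{s+1}}\cdots\se_{\gamma_m}$. Your closing remark on why the hypothesis is needed is a nice addition but not present in the paper's version.
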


\begin{proof}
The equality $\sE(\sx^{\beta+\gamma})=\sE(\sx^{\beta}\cdot \sx^{\gamma})$ is clear since
we have $\sx^{\beta+\gamma}=\sx^{\beta}\cdot \sx^{\gamma}$. On the one hand,
we have 
$$\sE(\sx^{\beta+\gamma})=\se_{\beta_{1}}\cdots \se_{\beta_{s}}\cdot \se_{\gamma_{s+1}}\cdots\se_{\gamma_{m}}$$ by the hypothesis
on $\beta$ and $\gamma$.\ On the other hand, the same hypothesis implies that
$$\sE(x^{\beta})=\se_{\beta_{1}}\cdots \se_{\beta_{s}}\qu{and} \sE(\sx^{\gamma
})=\se_{\gamma_{s+1}}\cdots\se_{\gamma_{m}}$$ since $\se_{\beta_{i}}=1$
for any $s+1\leq i\leq m$ and $\se_{\gamma_{i}}=1$ for any $1\leq i\leq s$. This
yields the desired equality.
\end{proof}

We set 
$$\Delta^A_m = \prod_{\al\in R^+} (1 - e^{\al}) = \prod_{1\leq i < j\leq n}(1-\dfrac{x_i}{x_j}).$$
\begin{Prop}
\label{delta_A}
For all $\beta\in \Z^m$ we have $\sE(\Delta^A_m\cdot \sx^{\beta}) = \sv_{\beta}$.
\end{Prop}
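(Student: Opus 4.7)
The plan is to expand $\Delta^A_m$ as a signed sum of Laurent monomials via the Vandermonde identity, multiply by $\sx^\beta$, and recognise the image under $\sE$ as the Leibniz expansion of the determinant defining $\sv_\beta$.

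First I would establish the identity
\[
\Delta^A_m = \sum_{\sigma\in\mathfrak{S}_m}\eps(\sigma)\prod_{i=1}^m x_i^{\sigma(i)-i},
\]
obtained by clearing denominators in $\prod_{1\leq i<j\leq m}(1-x_i/x_j) = \prod_{i<j}(x_j-x_i)/\prod_{j=1}^m x_j^{j-1}$ and applying the classical Vandermonde formula $\prod_{i<j}(x_j-x_i) = \det(x_i^{j-1})_{1\leq i,j\leq m}$.

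Multiplying by $\sx^\beta$ and distributing the sum yields
\[
\Delta^A_m \cdot \sx^\beta = \sum_{\sigma\in\mathfrak{S}_m}\eps(\sigma)\prod_{i=1}^m x_i^{\beta_i+\sigma(i)-i}.
\]
Applying $\sE$ termwise, the defining rule $\sE(\sx^\gamma)=\prod_i \se_{\gamma_i}$ (which can also be seen as an iteration of Lemma \ref{Lem_useful}, since the monomial on the right-hand side above has the variables $x_1,\dots,x_m$ appearing with pairwise disjoint supports) gives
\[
\sE(\Delta^A_m \cdot \sx^\beta) = \sum_{\sigma\in\mathfrak{S}_m}\eps(\sigma)\prod_{i=1}^m \se_{\beta_i+\sigma(i)-i},
\]
which is precisely the Leibniz expansion of $\det[\se_{\beta_i+j-i}]_{1\leq i,j\leq m} = \sv_\beta$.

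The argument presents no real obstacle: the only substantive ingredient is the Vandermonde expansion of $\Delta^A_m$, and the subsequent application of $\sE$ is immediate from its definition. One minor point worth mentioning: exponents $\beta_i+\sigma(i)-i$ falling outside $\{0,\ldots,n\}$ contribute zero on both sides, thanks to the convention $\se_k=0$ in that range, which matches the vanishing of the corresponding entries of $\sV(\beta)$.
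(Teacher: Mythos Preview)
Your proof is correct and follows essentially the same route as the paper: both arguments amount to recognising $\Delta^A_m\cdot\sx^\beta$ as the determinant $\det(x_i^{\beta_i+j-i})$ via the Vandermonde identity, expanding by Leibniz, and then applying $\sE$ monomial by monomial to recover the Leibniz expansion of $\sv_\beta$. The only cosmetic difference is that the paper first writes the matrix $\bigl(x_i^{\beta_i+j-i}\bigr)$ and factors it to obtain $\Delta^A_m\cdot\sx^\beta$, whereas you expand $\Delta^A_m$ first and then multiply by $\sx^\beta$; the underlying computation is identical.
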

\begin{proof}
Let $\beta = (\beta_1,\ldots,\beta_m)$. We have 
\begin{align*}
\left|
\begin{array}{ccccccccc}
x_1^{\beta_1} & x_1^{\beta_1+1}   &\ldots & x_1^{\beta_1+m-1}   \\
x_2^{\beta_2 -1} &x_2^{\beta_2 }&\ldots & x_2^{\beta_2+m-2}   \\
\vdots & \vdots & \ddots & \vdots\\
x_m^{\beta_m - m +1} &x_m^{\beta_m - m +2}  &\ldots & x_m^{\beta_m}  \\ 
\end{array}\right|
&= x_1^{\beta_1}\ldots x_m^{\beta_m-m+1}\cdot 
\left|
\begin{array}{ccccccccc}
1 & x_1   &\ldots & x_1^{m-1}   \\
1 & x_2   &\ldots & x_2^{m-1}   \\
\vdots & \vdots & \ddots & \vdots\\
1 & x_m   &\ldots & x_m^{m-1}   \\
\end{array}\right|\\
&= x_1^{\beta_1}\ldots x_m^{\beta_m-m+1}\cdot \prod_{1\leq i<j \leq m} (x_j - x_i)\\
&= x_1^{\beta_1}\ldots x_m^{\beta_m}\cdot \prod_{1\leq i<j \leq m} (1 -\frac{x_i}{x_j})\\
&= \Delta_m^A\cdot \sx^\beta.
\end{align*}
By expanding the determinant, we get
\begin{align*}
\sE(\Delta^A_m\cdot \sx^{\beta}) &= \sE\left(\sum_{\sigma\in {\frak S}_m} x_{1}^{\beta_{1} - (\sigma(1) - 1)}\ldots x_{m}^{\beta_{m} - (\sigma(m) - m)}   \right)\\
&= \sum_{\sigma\in {\frak S}_m} \sE(x_{1}^{\beta_{1} - (\sigma(1) - 1)})\ldots \sE(x_{m}^{\beta_{m} - (\sigma(m) - m)} )\\
&= \sum_{\sigma\in {\frak S}_m} \se_{\beta_{1} - (\sigma(1) - 1)}\ldots \se_{\beta_{m} - (\sigma(m) - m)}\\
&= \sv_\beta
\end{align*}
as required. 
\end{proof}
Recall the definition of the dot action in \Cref{settings}. 
\begin{Prop}
\label{straight_A}
Let $\beta\in \Z^m$. For all $w\in W$, we have $\sv_{w\circ \beta} = \eps(w)\sv_{\beta}$. Further either $\sv_\beta = 0$ or there exists a partition $\gamma\in \cP_{m,n-1}$ and $w\in W$ such that $\sv_\beta = \eps(w)\sv_{\gamma}$.
\end{Prop}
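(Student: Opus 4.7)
The plan is to derive both assertions from a direct row-permutation analysis of the Jacobi--Trudi matrix $\sV(\beta)$, rather than going through $\sE(\Delta_m^A \cdot \sx^\beta)$ again.

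\textbf{First assertion.} I would work with the lift $\rho = (m-1, m-2, \ldots, 0) \in \Z^m$ of the half-sum of positive roots. For $w = \sigma \in \mathfrak{S}_m$ a short computation gives $(w \circ \beta)_i = \beta_{\sigma^{-1}(i)} + i - \sigma^{-1}(i)$, and plugging this into the definition of $\sV$ yields
\[
[\sV(w \circ \beta)]_{i, j} = \se_{\beta_{\sigma^{-1}(i)} + j - \sigma^{-1}(i)} = [\sV(\beta)]_{\sigma^{-1}(i), j}.
\]
In other words $\sV(w \circ \beta)$ is $\sV(\beta)$ with its rows permuted by $\sigma^{-1}$, so that $\sv_{w \circ \beta} = \eps(\sigma^{-1}) \sv_\beta = \eps(w) \sv_\beta$.

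\textbf{Second assertion.} Assume $\sv_\beta \neq 0$. The coordinates of $\beta + \rho$ must be pairwise distinct: if $(\beta + \rho)_i = (\beta + \rho)_j$ for $i \neq j$, the transposition $s = (i, j)$ satisfies $s \circ \beta = \beta$, and the first assertion forces $\sv_\beta = -\sv_\beta = 0$. Hence there exists a unique $w \in W$ such that $w(\beta + \rho)$ is strictly decreasing; setting $\gamma := w \circ \beta$ gives $\sv_\gamma = \eps(w) \sv_\beta \neq 0$ and $\gamma$ weakly decreasing. I would then extract the bounds $0 \leq \gamma_m$ and $\gamma_1 \leq n$ from the fact that otherwise the last (resp.\ first) row of $\sV(\gamma)$ consists entirely of entries $\se_k$ with $k$ outside $\{0, \ldots, n\}$ and vanishes, contradicting $\sv_\gamma \neq 0$. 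Thus $\gamma$ is already a partition with $\gamma_1 \leq n$.

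The final step pushes $\gamma_1$ down from $n$ to at most $n - 1$. If $\gamma_1 \leq n - 1$, set $\gamma^\dag := \gamma$. Otherwise let $p$ be maximal with $\gamma_1 = \cdots = \gamma_p = n$ and invoke Remark~\ref{simplify} to write $\sv_\gamma = \sv_{\gamma^\dag}$ with $\gamma^\dag := (\gamma_{p+1}, \ldots, \gamma_m, 0, \ldots, 0)$; the conditions $\gamma_{p+1} < n$ and $\gamma_m \geq 0$ ensure $\gamma^\dag \in \cP_{m, n-1}$, and combining gives $\sv_\beta = \eps(w) \sv_{\gamma^\dag}$. I expect this trimming step to be the only mild obstacle: it is the unique place where the type-$A$ identity $\se_n = 1$ in $\Z[P]^W$ is used, and one must check that removing leading $n$'s via Remark~\ref{simplify} preserves the partition condition, which is automatic from $\gamma_m \geq 0$.
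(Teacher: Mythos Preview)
Your proof is correct and follows essentially the same strategy as the paper's: the first assertion is handled by showing that the dot action permutes the rows of $\sV(\beta)$ (you do this for a general $w$ at once, the paper only for simple reflections), and the second assertion is the same straightening-to-a-dominant-chamber argument followed by the row-vanishing bounds $0\le\gamma_m$, $\gamma_1\le n$ and the trimming via Remark~\ref{simplify}. The only cosmetic difference is that the paper phrases the existence of the straightening $w$ in terms of the shifted Weyl chamber $t_{-\rho}(\cC_0)$, whereas you argue directly that $\beta+\rho$ has distinct entries and sort them.
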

\begin{proof}
We prove the first assertion. To do so, it is enough to show that for all $i\in \{1,\ldots,n-1\}$, we have $\sv_{s_i\circ \beta} = -\sv_{\beta}$.  The matrix $\sV(s_i\circ \beta)$ only differs from $\sV(\beta)$ on rows $i$ and $i+1$. Let $\beta =(\beta_1,\ldots,\beta_m)$. We have 
\begin{center}
\begin{minipage}{0.45\linewidth}
\begin{align*}
[\sV(s_i\circ \beta)]_{i,j} &= \se_{(s_i\circ \beta)_i-i+j}\\
&= \se_{\beta_{i+1}-1-i+j}\\
&= \se_{\beta_{i+1}-(i+1)+j}\\
& = [\sV(\beta)]_{i+1,j} 
\end{align*}
\end{minipage}
\begin{minipage}{0.45\linewidth}
\begin{align*}
[\sV(s_i\circ \beta)]_{i+1,j} &= \se_{(s_i\circ \beta)_{i+1}-(i+1)+j}\\
&= \se_{\beta_{i}+1-(i+1)+j}\\
&= \se_{\beta_{i+1}-i+j}\\
& = [\sV(\beta)]_{i,j} 
\end{align*}
\end{minipage}
\end{center}
It follows that 
$$\sv_{s_i\circ \beta} = \det(\sV(s_i\circ \beta)) = - \det(\sV(\beta)) =  - \sv_{\beta}.$$
as required for the first assertion. 

\medskip

We prove the second assertion. Let $\beta\in \Z^m$ be such that $\sv_{\beta} \neq 0$. Using the first relation and the fact that the dot action is simply the action of the Weyl group translated by $-\rho$, we see that there exists $w\in W$ such that $\gamma = w\circ \beta$ lies in the chamber $t_{-\rho}(\cC_0)$. This means that for all $\al_i=\eps_i-\eps_{i+1}\in \Delta$, we have 
$$\scal{\gamma}{\al_i} \geq \scal{-\rho}{\al_i} = -1.$$
If there exists $\al_i$ such that $\scal{\gamma}{\al}=-1$, then $s_i\circ \gamma = \gamma$ and this forces $\sv_{\gamma}$ and $\sv_{\beta}$ to be $0$. This is impossible since we assumed that $\sv_{\beta}\neq 0$ hence we must have $\scal{\gamma}{\al_i}\geq 0$ for all $i$, i.e. $\gamma_i \geq \gamma_{i+1}$. Next we must have $\gamma_1\leq n$ otherwise the first row of $\sV(\gamma)$ is $0$ and  $\gamma_m\geq 0$  otherwise the last row of $\sV(\gamma)$ is $0$. Finally by Remark~\ref{simplify}, we can remove all the parts of $\gamma$ that are equal to $n$ to obtain a partition $\gamma^\dag$ such that $\gamma^\dag_1<n$ and $\gamma^\dag_0\geq 0$ and $\sv_{\beta} = \pm\sv_{\gamma^\dag}$, as required.  
\end{proof}
We are now ready to prove the Schur duality.

\medskip

\textit{Proof of  \Cref{schur_duality}}. 
Let $\mu\in\cP_{n,m}$.  We have
\begin{align*}
 \sE(\sx^{\mu'} ) &=  \sE(\Delta^A_m \cdot\dfrac{1}{\Delta^A_m}\cdot  \sx^{\mu'}) \\
 &= \sE(\Delta^A_m \cdot\sum_{\beta\in \Z^m} \cP(\beta)\sx^{\beta+\mu'}) \tag{by definition of ${\cal P}$}\\
 &= \sum_{\beta\in \Z^m} \cP(\beta)\sE(\Delta^A_m \cdot\sx^{\beta+\mu'}) \\
  &= \sum_{\beta\in \Z^m} \cP(\beta)\sv_{\beta+\mu'} \tag{by Proposition \ref{delta_A}}\\
  &= \sum_{\gamma\in \cP_{m,n-1}} \sum_{w\in W}\eps(w)\mathcal{P}(w\circ \gamma - \mu' ) \sv_{\gamma} \tag{by Proposition \ref{straight_A}}\\
    &= \sum_{\gamma\in \cP_{m,n-1}} K^{A_m}_{\gamma,\mu'} {\sf s}_{\gamma'}\tag{by \Cref{Kostant}}\\
        &= \sum_{\la\in \cP_{n-1,m}} K^{A_m}_{\la',\mu'} {\sf s}_{\la}.
\end{align*} 
Since $\sE(\sx^{\mu'}) =\se_{\mu'}$
we get the result.  
\hfill
$\square$


\section{Howe duality in type $C$}
\label{Sec_HoweDuality}
In this section, we prove the Howe duality for $\mathfrak{sp}_{2n}(\C)$. The root system associated to $\mathfrak{sp}_{2n}(\C)$ is of type $C_{n}$ and we keep the notation of Example \ref{type_C} and Section \ref{settings}. Recall that the set of dominant weights of $\mathfrak{sp}_{2n}(\C)$ is in bijection with $\cP_n$. We will freely identify those two sets.  

\medskip

Let $V$ be the Euclidean space of dimension $m$ with basis $(\eps_1,\ldots,\eps_m)$. We define the involution $\sI$ on $V$ by
$$\sI(\beta_1,\ldots,\beta_m) = (-\beta_m,\ldots,-\beta_1).$$

\begin{Def}
\label{def_hat}
Let $\beta\in \cP_{n,m}$. We define the partition $\wh{\beta}\in \cP_{m,n}$ by
$$\wh{\beta} := \sI(\beta') + n\cdot (1,\ldots,1).$$
\end{Def}
The partition $\wh{\beta}$ is obtained by taking the conjugate of the complement of $\beta$ in the rectangle $n\times m$. Note that the map $\wh{\ \cdot\ }$ depends on the integers $n$ and $m$. It sends a weight of $C_n$ to a weight of $C_m$.  
\begin{Exa}
Let $\beta = (5,4,2,1)\in 4\times 5$. Then we have $\beta' = (4,3,2,2,1)$ and
$$\wh{\beta} = \sI(\beta') + 4\cdot (1,1,1,1,1) = (3,2,2,1,0).$$
In the figure below, we represent the partition $\wh{\beta}$ (in green) as the conjugate complement of the partition $\beta$ (in white) in the rectangle $4 \times 5$:
$$
\ydiagram[*(white)]{5,4,2,1}*[*(green!50)]{5,5,5,5}
$$
\end{Exa}
The map $\wh{\ \cdot \ }$ will play in the Howe duality a role analogue to the conjugation in the Schur duality. For $N\in\Z_{\geq 1}$ and $\gamma,\lambda\in \cP_N$, we write $K^{C_N}_{\gamma,\lambda}$ for the dimension of the
$\gamma$-weight space in the irreducible representation~$V(\lambda)\in \mathfrak{sp}_{2N}(\C)$ of highest weight $\lambda$.

\begin{Th}[Howe Duality \cite{H95}]
\label{Thm_Howe_duality}
Let $\mu\in \cP_{n,m}$ and let $\mu' = (\mu'_1,\ldots,\mu'_m)$. For all $\la\in \cP_n$, we have
$$\left[{\sf \Lambda}^{\mu'_1}(\C^{2n})\otimes \ldots \otimes {\sf \Lambda}^{\mu'_m}(\C^{2n}):V(\la)\right] = K^{C_m}_{\wh{\la},\wh{\mu}}$$ 
where ${\sf \Lambda}^{k}(\C^{2n})$ is the $k$-th exterior power of the natural representation $\C^{2n}$ of $\mathfrak{sp}_{2n}(\C)$.
\end{Th}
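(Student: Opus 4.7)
The plan is to mirror the proof of Schur duality (\Cref{schur_duality}), replacing type-$A$ data throughout with type-$C$ data. First, I set $\se_k := e_k(y_1, y_1^{-1}, \ldots, y_n, y_n^{-1})$: these satisfy $\se_k = 0$ for $k \notin [0, 2n]$, $\se_0 = \se_{2n} = 1$, and the palindromic symmetry $\se_k = \se_{2n-k}$. I define $\sE : \Z[[x_1^{\pm 1}, \ldots, x_m^{\pm 1}]] \to \Z[y_1^\pm, \ldots, y_n^\pm]^{W(C_n)}$ by $\sE(\sx^\beta) = \prod_i \se_{\beta_i}$, for which the analogue of \Cref{Lem_useful} carries over verbatim. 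Note that the character of $\Lambda^{\mu'_1}(\C^{2n}) \otimes \cdots \otimes \Lambda^{\mu'_m}(\C^{2n})$ is precisely $\sE(\sx^{\mu'}) = \se_{\mu'}$.

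Next I set $\Delta^C_m := \prod_{\al \in R^+(C_m)}(1 - e^\al) = \prod_i(1 - x_i^2) \prod_{i<j}(1 - x_i x_j^{-1})(1 - x_i x_j)$ and $\sv_\beta := \sE(\Delta^C_m \cdot \sx^\beta)$. Expanding $\Delta^C_m$ via the Weyl denominator identity for $W(C_m) = \mathfrak{S}_m \ltimes (\Z/2)^m$ and applying the variable-separation lemma (the analogue of \Cref{Lem_useful}) yields
$$\sv_\beta = \sum_{w \in W(C_m)} \eps(w) \prod_i \se_{(\beta + \rho^C - w\rho^C)_i},$$
an $m \times m$ signed sum that can be organised as a determinant whose entries are differences of $\se_k$'s (reflecting both the short roots $\eps_i \pm \eps_j$ and the long roots $2\eps_i$); this is the analogue of \Cref{delta_A}. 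The analogue of \Cref{straight_A} asserts $\sv_{w \circ \beta} = \eps(w) \sv_\beta$ for $w \in W(C_m)$: one checks the simple dot-reflections $s_1,\ldots,s_{m-1}$ row-swap the determinant as in type $A$, and the additional long-root reflection $s_m$ is handled using the palindromic symmetry $\se_k = \se_{2n-k}$ on the $\se$-side. As a consequence, each nonzero $\sv_\beta$ reduces to $\eps(w)\sv_{\gamma_\lambda}$ for a unique dominant $\gamma_\lambda$ parametrised by $\lambda \in \cP_{n,m}$, together with a symplectic Jacobi--Trudi identity $\sv_{\gamma_\lambda} = \fs_\lambda^{C_n}$ (the bijection $\gamma_\lambda \leftrightarrow \hat\lambda$ being the type-$C$ analogue of $\sv_{\lambda'} = \fs_\lambda$).

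The concluding computation now copies that of \Cref{schur_duality}:
\begin{align*}
\sE(\sx^{\mu'})
&= \sE\bigl(\Delta^C_m \cdot (\Delta^C_m)^{-1} \cdot \sx^{\mu'}\bigr)
= \sum_{\beta \in \Z^m} \cP^C(\beta)\, \sv_{\beta + \mu'} \\
&= \sum_{\lambda \in \cP_{n,m}} \Big(\sum_{w \in W(C_m)} \eps(w)\, \cP^C(w \circ \gamma_\lambda - \mu')\Big) \fs_\lambda^{C_n}
= \sum_{\lambda \in \cP_{n,m}} K^{C_m}_{\gamma_\lambda,\, \mu'}\, \fs_\lambda^{C_n}
\end{align*}
using \Cref{Kostant} applied to $\mathfrak{sp}_{2m}(\C)$. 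A final identification $K^{C_m}_{\gamma_\lambda, \mu'} = K^{C_m}_{\hat\lambda, \hat\mu}$---converting the multiplicity produced by the determinantal straightening into the Howe multiplicity on the right---then yields the stated formula.

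The principal novelty relative to the type-$A$ case is twofold. First, the straightening of $\sv_\beta$ under the enlarged Weyl group $W(C_m) = \mathfrak{S}_m \ltimes (\Z/2)^m$ requires the palindromic symmetry $\se_k = \se_{2n-k}$ in order to cope with the sign-change generators, which have no counterpart in type $A$. Second, the last matching step---identifying $K^{C_m}_{\gamma_\lambda, \mu'}$ with the Howe coefficient $K^{C_m}_{\hat\lambda, \hat\mu}$, and hence the appearance of the conjugate-complement $\hat{\,\cdot\,}$ rather than the transpose $'$ in the final formula---rests on a character identity for symplectic groups with no type-$A$ analogue. Arranging the determinantal formula so that both steps go through cleanly is the computational heart of Section~\ref{Sec_HoweDuality}.
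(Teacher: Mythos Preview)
Your overall strategy---use a map $\sE$, a Weyl-denominator factor $\Delta^C_m$, a determinantal $\sv_\beta$, a straightening relation, and Kostant's formula---matches the paper's. But the execution has a genuine gap at the two places you flag as ``novelties'', and the gap cannot be repaired without the very twist the paper introduces.

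You take $\Delta^C_m = \prod_{\alpha\in R^+}(1-e^\alpha)$ (the \emph{untwisted} denominator) and assert that $\sv_\beta=\sE(\Delta^C_m\sx^\beta)$ satisfies $\sv_{w\circ\beta}=\eps(w)\sv_\beta$ for the \emph{standard} dot action $w\circ\beta=w(\beta+\rho)-\rho$, with the long-root reflection $s_m$ handled by the palindrome $\se_k=\se_{2n-k}$. This fails already for $m=n=1$: then $\sv_{\beta_1}=\se_{\beta_1}-\se_{\beta_1+2}$, and $s_1\circ\beta_1=-\beta_1-2$; taking $\beta_1=1$ gives $\sv_1=\se_1\neq 0$ while $\sv_{-3}=\se_{-3}-\se_{-1}=0$. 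The palindrome is centred at $n$, not at $0$, so it does not cooperate with the standard reflection $\beta_m\mapsto -\beta_m-2$.

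The paper resolves this by an $\sI$-twist, where $\sI(\beta_1,\ldots,\beta_m)=(-\beta_m,\ldots,-\beta_1)$: it sets $\Delta^C_m=\sI\bigl(\prod_{\alpha>0}(1-e^\alpha)\bigr)$, which makes $\sE(\Delta^C_m\sx^\beta)$ equal on the nose to the symplectic Jacobi--Trudi determinant, so that $\sv_{\lambda'}=\fs_\lambda$ with the \emph{ordinary} transpose (not $\hat\lambda$). The straightening then holds for the \emph{modified} dot action with shift $\delta=\sI(\rho)-n\omega_m=(-n-1,\ldots,-n-m)$; this is exactly what aligns the long-root reflection $\beta_m\mapsto -\beta_m+2n+2m$ with the palindrome. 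Correspondingly $1/\Delta^C_m$ expands via a twisted partition function $\widetilde{\cP}(\beta)=\cP(\sI(\beta))$. Unwinding $\sI$ and $\delta$ in the final Kostant sum is precisely what turns $\lambda'$ into $\hat\lambda=\sI(\lambda')+n\omega_m$ and $\mu'$ into $\hat\mu$: there is no separate ``character identity for symplectic groups'' to invoke. In short, the two steps you black-box (``$\gamma_\lambda\leftrightarrow\hat\lambda$'' and ``$K^{C_m}_{\gamma_\lambda,\mu'}=K^{C_m}_{\hat\lambda,\hat\mu}$'') are absorbing exactly the content that the $\sI$-twist makes explicit, and without that twist the straightening step on which everything rests is simply false.
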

Following \cite[Section 24.2]{FH91}, the character of ${\sf \Lambda}^{k}(\C^{2n})$ is $\se_{k}(x_1,\ldots,x_n,x_1^{-1},\ldots,x_n^{-1})$. If we define the coefficients $u$ by setting (we omit the variables)
 $$ \se_{\mu'_1}\ldots \se_{\mu'_m} = \sum_{\la\in \cP_{n,m}} u_{\la,\mu'} {\sf s}_\la$$
then the theorem states that $u_{\la,\mu'}=K^{C_m}_{\wh{\la},\wh{\mu}}$ for all $\la\in \cP_{n,m}$.

\medskip

For $\beta = (\beta_1,\ldots,\beta_m)\in \Z^m$ we define the matrix $\sV(\beta)$ by 
$$[\sV(\beta)]_{i,j} = \se_{\beta_i - (i+j)} - \se_{\beta_i - (i-j)}$$
where $\se_k = 0$ whenever $k\notin \{0,\ldots,2n\}$. We set $\sv_{\beta} = \det(\sV(\beta))$. More explicitely we have
$$\sv_\beta = 
\left|
\begin{array}{ccccccccc}
\se_{\beta_1} -\se_{\beta_1-2} & \se_{\beta_1+1} - \se_{\beta_1-3} &\ldots & \se_{\beta_1 + m-1} - \se_{\beta_1-m -1}   \\
\se_{\beta_2-1} - \se_{\beta_2-3} & \se_{\beta_2} - \se_{\beta_2-4} &\ldots & \se_{\beta_2 + m-2} - \se_{\beta_2-m -2}   \\
\vdots & \vdots & \ddots & \vdots\\
\se_{\beta_m-m+1} - \se_{\beta_m-m-1} & \se_{\beta_m-m+2} - \se_{\beta_m-m-2} &\ldots & \se_{\beta_m} - \se_{\beta_m-2m}   \\
\end{array}
\right|.
$$
The Jacobi-Trudi formula in type $C_n$ states that for all $\la\in\cP_{n,m}$, we have $\fs_{\la} = \sv_{\la'}$. 

\medskip

Given $\beta\in \Z^m$ we set $\sx^\beta = x_1^{\beta_1}\ldots x_n^{\beta_n}$ and $\se_{\beta} = \se_{\beta_1}\ldots \se_{\beta_n}$. We then define $\sE$ to be the linear application:
$$\begin{array}{ccccc}
\sE : & \Z[P] &\rightarrow & \Z[P]^{W}\\
&\sx^\beta &\mapsto & \se_{\beta} \end{array}$$
It is not hard to check that the map $\sE$ can be extended to $\Z[[P]]$ and that it satisfies Lemma \ref{Lem_useful}.

\medskip

The involution $\sI$ on $V$ induces a ring involution $\Z[P]$ that maps $e^{\beta}$ to $e^{\sI(\beta)}$ for all $\beta\in P$.  In particular it maps $x_i=e^{\eps_i}$ to $x^{-1}_{m-i}=e^{-\eps_{m-i}}$. We will still denote this involution by $\sI$. 
We set 
$$\Delta^C_m= \sI\left(\prod_{\al\in R^+}(1-e^\al)\right)= \prod_{1\leq i < j \leq m} \left(1-\dfrac{x_i}{x_j} \right)\prod_{1\leq i \leq  j \leq m} \left(1-\dfrac{1}{x_ix_j} \right).$$

\begin{Prop}
\label{delta_C}
For all $\beta\in \Z^m$, we have $\sE(\Delta^C_m\cdot \sx^{\beta}) = \sv_{\beta}$.
\end{Prop}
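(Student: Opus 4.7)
The plan is to mimic the type $A$ strategy of \Cref{delta_A}: first recognise $\Delta^C_m\cdot \sx^\beta$ as a determinant whose rows involve only one variable each, then apply $\sE$ term by term using \Cref{Lem_useful}. Concretely, I will show
\[
\Delta^C_m\cdot \sx^\beta \;=\; \det\!\bigl[x_i^{\beta_i+j-i}-x_i^{\beta_i-j-i}\bigr]_{1\leq i,j\leq m},
\]
which is the type-$C$ analogue of the Vandermonde identity used in the proof of \Cref{delta_A}. Once this is established, the conclusion is immediate: expanding the determinant as $\sum_{\sigma\in\mathfrak{S}_m}\eps(\sigma)\prod_i(x_i^{\beta_i+\sigma(i)-i}-x_i^{\beta_i-\sigma(i)-i})$, the monomials in each summand use disjoint variables row by row, so \Cref{Lem_useful} allows $\sE$ to pass inside each factor, yielding $\sum_\sigma\eps(\sigma)\prod_i(\se_{\beta_i+\sigma(i)-i}-\se_{\beta_i-\sigma(i)-i})=\det[\sE(x_i^{\beta_i+j-i})-\sE(x_i^{\beta_i-j-i})]=\sv_\beta$.

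To prove the key identity, I would factor $x_i^{\beta_i-i}$ out of row $i$ on the right-hand side, reducing the claim to
\[
\det\!\bigl[x_i^{j}-x_i^{-j}\bigr]_{1\leq i,j\leq m} \;=\; \Delta^C_m\cdot \prod_{i=1}^m x_i^{i}.
\]
The left-hand side is (up to reversing the columns, which contributes the sign $(-1)^{\binom{m}{2}}$) the Weyl numerator $a_\rho$ for type $C_m$ associated to $\rho=(m,m-1,\dots,1)$, and hence by Weyl's denominator formula equals
\[
(-1)^{\binom{m}{2}}\prod_{\alpha\in R^+}(e^{\alpha/2}-e^{-\alpha/2}) \;=\; (-1)^{\binom{m}{2}}\!\!\prod_{1\leq i<j\leq m}\!\!\frac{(x_i-x_j)(x_ix_j-1)}{x_ix_j}\cdot\prod_{i=1}^m\frac{x_i^{2}-1}{x_i}.
\]
A straightforward tidying of this product (rewriting $x_i-x_j=-x_j(1-x_i/x_j)$, $x_ix_j-1=-x_ix_j(1-1/(x_ix_j))$, $x_i^2-1=-x_i^2(1-1/x_i^2)$ and tracking the monomial $\prod_i x_i^{i+m}$ that results) matches the definition of $\Delta^C_m$ and produces exactly the factor $\prod_i x_i^i$ on the right-hand side, with the two signs $(-1)^{\binom{m}{2}}$ cancelling.

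Alternatively, and perhaps more in the spirit of the type-$A$ derivation, one can bypass the Weyl denominator formula by computing $\det[x_i^j-x_i^{-j}]$ directly: factor $(x_i-x_i^{-1})$ from each row (since $x_i^j-x_i^{-j}$ is divisible by $x_i-x_i^{-1}$), perform column operations to convert the resulting entries into $(x_i+x_i^{-1})^{j-1}$ up to lower-order terms, and identify the residual determinant as a Vandermonde in the variables $y_i=x_i+x_i^{-1}$. Either route is routine.

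The only real step to set up carefully is the factorisation identity above; once it is in place, the application of $\sE$ via \Cref{Lem_useful} is the same mechanical expansion that worked in type $A$. The main obstacle, if one insists on a self-contained elementary argument, is precisely the bookkeeping of powers of $x_i$ in comparing $\Delta^C_m\cdot \prod_i x_i^i$ with the determinant $\det[x_i^j-x_i^{-j}]$, since the denominators from the two types of factors $(1-x_i/x_j)$ and $(1-1/(x_ix_j))$ combine non-trivially; but this is a computation rather than a conceptual difficulty.
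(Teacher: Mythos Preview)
Your proposal is correct and follows the same two-step architecture as the paper: first identify $\Delta^C_m\cdot\sx^\beta$ with the determinant $\det[x_i^{\beta_i+j-i}-x_i^{\beta_i-j-i}]$, then expand and push $\sE$ through each row-factor via \Cref{Lem_useful}. Your ``alternative'' route---factoring $(x_i-x_i^{-1})$ from each row, column-reducing to a Vandermonde in $y_i=x_i+x_i^{-1}$, then simplifying the product---is precisely the computation the paper carries out; the paper does not invoke the Weyl denominator formula but works directly, so your second suggestion is the closer match.
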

\begin{proof}
Let $\beta = (\beta_1,\ldots,\beta_m)$. We have
\begin{align*}
\det(x_i^{\beta_i - i + j} -x_i^{\beta_i - i - j} ) &= \prod_{i=1}^m(x_i^{\beta_i - i + 1} -x_i^{\beta_i - i - 1})
 \left|
\begin{array}{ccccccccc}
1& x_1 + x_1^{-1} &\ldots &  x_1^{m-1} + x_1^{-m+1}  \\
1& x_2 + x_2^{-1} &\ldots &  x_2^{m-1} + x_2^{-m+1}  \\
\vdots & \vdots & \ddots & \vdots\\
1& x_m + x_m^{-1} &\ldots &  x_m^{m-1} + x_m^{-m+1}  \\
\end{array}
\right|\\
&= \prod_{i=1}^m(x_i^{\beta_i - i + 1} -x_i^{\beta_i - i - 1})
 \left|
\begin{array}{ccccccccc}
1& x_1 + x_1^{-1} &\ldots &  (x_1 + x_1^{-1})^{m-1}  \\
1& x_2 + x_2^{-1} &\ldots &  (x_2 + x_2^{-1})^{m-1}  \\
\vdots & \vdots & \ddots & \vdots\\
1& x_m + x_m^{-1} &\ldots &  (x_m + x_m^{-1})^{m-1}  \\
\end{array}
\right|\\
&=  \prod_{i=1}^m(x_i^{\beta_i - i + 1} -x_i^{\beta_i - i - 1}) \prod_{1\leq i<j\leq m}(x_j + x_j^{-1} - x_i - x_i^{-1})\\
&=  \prod_{i=1}^m(x_i^{\beta_i - i + 1} -x_i^{\beta_i - i - 1}) \prod_{1\leq i<j\leq m}x_j(1-\dfrac{x_i}{x_j})(1-\dfrac{1}{x_ix_j})\\
&=  \prod_{i=1}^m(x_i^{\beta_i - i + 1} -x_i^{\beta_i - i - 1}) {x_2\ldots x_m^{m-1}}\prod_{1\leq i<j\leq m}(1-\dfrac{x_i}{x_j})(1-\dfrac{1}{x_ix_j})\\
&=  \prod_{i=1}^m(x_i^{\beta_i} -x_i^{\beta_i - 2}) \prod_{1\leq i<j\leq m}(1-\dfrac{x_i}{x_j})(1-\dfrac{1}{x_ix_j})\\
&=  \sx^{\beta}\prod_{i=1}^m(1-\dfrac{1}{x_i^2})\prod_{1\leq i<j\leq m}(1-\dfrac{x_i}{x_j})(1-\dfrac{1}{x_ix_j})\\
&=\Delta^C_m\cdot \sx^{\beta}
\end{align*}
as required. By expanding the determinant, we get
\begin{align*}
\sE(\Delta^C_m\cdot \sx^{\beta}) &= \sE\left(\sum_{\sigma\in {\frak S}_m} \prod_{i=1}^m x_i^{\beta_i - (\sigma(i) - i)} - x_i^{\beta_i - (\sigma(i) - i) -2}   \right)\\
&=\sum_{\sigma\in {\frak S}_m} \sE\left(\prod_{i=1}^m x_i^{\beta_i - (\sigma(i) - i)} - x_i^{\beta_i - (\sigma(i) - i) -2}   \right)\\
&=\sum_{\sigma\in {\frak S}_m} \prod_{i=1}^m  \sE\left(x_i^{\beta_i - (\sigma(i) - i)} - x_i^{\beta_i - (\sigma(i) - i) -2}   \right)\tag{by Lemma \ref{Lem_useful}}\\
&=\sum_{\sigma\in {\frak S}_m} \prod_{i=1}^m  \se_{\beta_i - (\sigma(i) - i)} - \se_{\beta_i - (\sigma(i) - i) -2}\\
&= \sv_\beta.
\end{align*}
\end{proof}
In the proof of the Schur duality, we used the trick 
$$\sE(\sx^{\mu'}) = \sE(\Delta_m^A\cdot\dfrac{1}{\Delta_m^A} \sx^{\mu'})$$
and we expanded the inverse of $\Delta_m^A$ with the Kostant partition function. Using the same idea here involves some twisted Kostant partition function.
\begin{Def}(\cite[Lemma 3.3.1]{Le06})
The element $\Delta^C_m$ is invertible in the ring of formal Laurent series in the variables $(x_i^{\pm 1})_{1\leq i\leq n}$ and 
we can define a partition function associated to $\Delta^C_m$ by setting 
$$\dfrac{1}{\Delta^C_m} = \sum_{\beta\in \Z^m} \widetilde{\mathcal{P}}(\beta) \sx^{\beta}.$$
\end{Def}

\begin{Lem}
\label{PP_tilde}
We have  $\widetilde{\mathcal{P}}(\beta) =\mathcal{P}(\sI(\beta))$ where ${\cal P}$ is the usual Kostant partition function.
\end{Lem}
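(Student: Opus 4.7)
The plan is to deduce the identity directly from the definition of $\Delta_m^C$ as the image of $\prod_{\alpha\in R^+}(1-e^\alpha)$ under the involution $\sI$, together with the fact that $\sI$ extends to a ring automorphism of the appropriate ring of formal Laurent series.

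First I would observe that $\sI$ is a $\Z$-algebra involution of $\Z[P]$, and since it sends each monomial $\sx^\beta$ to $\sx^{\sI(\beta)}$, it extends naturally to a ring automorphism of the ring of formal Laurent series in the variables $(x_i^{\pm 1})_{1\leq i\leq m}$ in which $\Delta_m^C$ is invertible. In particular, for any invertible element $f$ we have $\sI(f^{-1}) = \sI(f)^{-1}$.

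Next I would apply this to the defining identity of the usual Kostant partition function in type $C_m$:
\[
\frac{1}{\prod_{\alpha\in R^+}(1-e^\alpha)} = \sum_{\beta \in \Z^m} \cP(\beta)\, \sx^\beta.
\]
Applying $\sI$ to both sides and using the definition $\Delta_m^C = \sI\bigl(\prod_{\alpha\in R^+}(1-e^\alpha)\bigr)$ gives
\[
\frac{1}{\Delta_m^C} = \sI\!\left(\sum_{\beta\in \Z^m} \cP(\beta)\, \sx^\beta\right) = \sum_{\beta\in \Z^m} \cP(\beta)\, \sx^{\sI(\beta)}.
\]
Changing the summation variable to $\gamma := \sI(\beta)$ and using $\sI^2 = \mathrm{id}$, this becomes $\sum_{\gamma\in \Z^m} \cP(\sI(\gamma))\, \sx^\gamma$. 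Comparing with the defining expansion of $\widetilde{\cP}$ yields $\widetilde{\cP}(\gamma) = \cP(\sI(\gamma))$ for all $\gamma$.

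There is no real obstacle here; the only minor point to be careful about is the justification that $\sI$ may be applied termwise to the formal series $1/\prod_{\alpha\in R^+}(1-e^\alpha)$, which amounts to noting that $\sI$ preserves the grading (up to sign reversal) used to make sense of these series, so that commuting $\sI$ with the infinite sum is legitimate.
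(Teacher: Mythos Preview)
Your argument is correct and is essentially identical to the paper's proof: both apply the ring involution $\sI$ to the defining expansion of the Kostant partition function, use $\Delta_m^C=\sI\bigl(\prod_{\alpha\in R^+}(1-e^\alpha)\bigr)$ to identify the left-hand side with $1/\Delta_m^C$, and then reindex the sum via $\sI^2=\mathrm{id}$. The only difference is that you spell out a bit more carefully why $\sI$ extends to the formal series and commutes with inversion.
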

\begin{proof}
Using the fact that $\sI$ is an involution we get
$$\dfrac{1}{\Delta^C_m} = \sI\left(\dfrac{1}{\underset{\al\in R_+}{\prod}(1-e^\al))}\right) =\sI(\sum_{\beta\in \Z^m}\cP(\beta)\sx^\beta) = \sum_{\beta\in \Z^m} \cP(\beta)\sx^{\sI(\beta)} = \sum_{\beta\in \Z^m} \cP(\sI(\beta))\sx^{\beta} $$
hence the result. 
\end{proof}

The next step in our proof of the Schur duality in type $A$ was to show that the determinant $\sv_\beta$ satisfied the relation $\sv_{w\circ \beta} = \eps(w)\sv_{\beta}$ for all $w\in {\frak S}_m$. In type $C_m$, we need to modify the dot action to obtain such a relation. More precisely, we are looking for $\delta\in P$ such that we have for all $w\in W$:
$$\sv_{w\circ \beta} = \eps(w)\sv_{\beta} \qu{where} w\circ \beta = w(\beta+\delta) - \delta.$$
It will turn out that there is a unique $\delta$ satisfying this relation. The proof is based on the following symmetry.
\begin{Prop}
We have $\se_{n+k} = \se_{n-k}$ for all $k\in \Z$ where $\se_k = 0$ if $k\notin \{1,\ldots,2n\}$.
\end{Prop}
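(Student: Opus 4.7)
The plan is to establish the stronger palindromic identity $\se_k = \se_{2n-k}$ for all $k \in \Z$, from which the stated equality $\se_{n+k}=\se_{n-k}$ follows by substituting $k \leftarrow n-k$.

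First I would write down the generating function
$$f(t) := \sum_{k \geq 0} \se_k\, t^k = \prod_{i=1}^{n} (1 + t x_i)(1 + t x_i^{-1}),$$
which makes sense since $\se_k$ is by definition the $k$-th elementary symmetric function in the $2n$ variables $x_1,\ldots,x_n,x_1^{-1},\ldots,x_n^{-1}$. Each factor expands as
$$(1 + tx_i)(1 + tx_i^{-1}) = 1 + t(x_i + x_i^{-1}) + t^2,$$
which is a palindromic polynomial in $t$ of degree $2$; equivalently, it satisfies the relation $(1 + tx_i)(1 + tx_i^{-1}) = t^{2}\,(1 + t^{-1}x_i)(1 + t^{-1}x_i^{-1})$. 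Taking the product over $i=1,\ldots,n$ yields the functional equation
$$f(t) = t^{2n}\, f(1/t).$$
Comparing the coefficient of $t^k$ on both sides then produces $\se_k = \se_{2n-k}$ for all $k$, with the convention that $\se_k = 0$ for $k \notin \{0,\ldots,2n\}$.

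There is no serious obstacle; the statement is the character-level reflection of the well-known self-duality $\Lambda^k(\C^{2n}) \simeq \Lambda^{2n-k}(\C^{2n})$ of $\mathfrak{sp}_{2n}(\C)$-modules, which in turn reflects the invariant symplectic pairing together with the triviality of the top exterior power $\Lambda^{2n}(\C^{2n})$. A purely combinatorial variant, bypassing the generating function altogether, proceeds by noting that $\se_k$ is the sum of $\prod_{x \in S} x$ over all $k$-subsets $S$ of $X := \{x_1^{\pm 1},\ldots,x_n^{\pm 1}\}$, and that the map $S \mapsto \{x^{-1} : x \in X \setminus S\}$ provides a bijection between $k$-subsets and $(2n-k)$-subsets of $X$ which preserves the associated monomial, using the crucial identity $\prod_{x \in X} x = 1$.
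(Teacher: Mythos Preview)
Your proof is correct. The paper states this proposition without proof, treating it as a well-known elementary fact, so there is no argument to compare against. Your generating-function derivation of the palindromy $f(t)=t^{2n}f(1/t)$ is the standard and cleanest route, and the alternative bijective argument via $S\mapsto\{x^{-1}:x\in X\setminus S\}$ is equally valid. One small remark: the convention in the statement should read $k\notin\{0,\ldots,2n\}$ rather than $\{1,\ldots,2n\}$ (consistent with the paper's earlier definition of $\sV(\beta)$), since $\se_0=1=\se_{2n}$; you implicitly use the correct convention.
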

Recall that the $m$-th fundamental weight in a root system of type  $C_m$ is $\om_m = (1,\ldots,1)$ and that the half sum of positive roots is $\rho_m = (m,\ldots,1)$.

\begin{Th}
\label{straightening_C}
Let $\delta = \sI(\rho) - m\cdot \om_m\in \Z^m$ and define a "dot action" of $W$ on $\Z^m$ by setting
$$w\circ \beta =w(\beta+\delta) - \delta.$$
We have
\begin{enumerate}
\item For all $\beta\in \Z^m$, we have $\sv_{w\circ \beta} = \eps(w)\sv_{\beta}$. 
\item If $\sv_\beta\neq 0$, there exists a partition $\gamma\in\cP_{m,n}$ and $w\in W$ such that  $w\circ \beta = \gamma$ and $\sv_\beta =\eps(w) \sv_\gamma$.
\end{enumerate}
\end{Th}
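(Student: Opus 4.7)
My plan is to establish both statements by closely mimicking the argument of \Cref{straight_A}, adapted to type $C$. For (1), since $W$ is generated by the simple reflections $s_1,\ldots,s_m$, it suffices to prove $\sv_{s_i\circ\beta}=-\sv_\beta$ for each $i$, and then iterate on a reduced expression of $w$.

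For $s_i$ with $i<m$ the argument is exactly parallel to the type $A$ case: the relation $\delta_i-\delta_{i+1}=1$ between consecutive entries of $\delta$ implies that the dot-action of $s_i$ on $\beta$ has the effect of swapping rows $i$ and $i+1$ of $\sV(\beta)$, since each row depends only on the combination $\beta_i-i$; hence the determinant flips sign. The main novelty is the case of $s_m$, which reflects only the last coordinate. The value of $\delta_m$ is calibrated so that, after substituting the dot-shifted last component into the row $m$ entries and applying the palindromic symmetry $\se_{n+k}=\se_{n-k}$ of elementary symmetric functions in $x_1,\ldots,x_n,x_1^{-1},\ldots,x_n^{-1}$, row $m$ of $\sV(s_m\circ\beta)$ becomes exactly the negative of row $m$ of $\sV(\beta)$. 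The sign flip then follows.

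For (2), the strategy is again analogous to \Cref{straight_A}(2). I first observe that if $\beta+\delta$ lies on any reflecting hyperplane of $W$ --- that is, on one of $\{x_i=x_j\}$, $\{x_i=-x_j\}$, or $\{x_i=0\}$ --- then two rows of $\sV(\beta)$ either coincide, become negatives of each other via the palindromic symmetry, or one row vanishes entirely; in all cases $\sv_\beta=0$. A direct check further shows that if $|(\beta+\delta)_i|>n+m$ for some $i$, then all entries $\se_{\beta_i-i\pm j}$ in row $i$ fall outside the support $\{0,\ldots,2n\}$ of the $\se_k$, so that row vanishes. Under the assumption $\sv_\beta\neq 0$, these constraints force $\beta+\delta$ into the strict interior of a Weyl chamber with all coordinates bounded in absolute value by $n+m$. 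I then choose $w\in W$ so that $w(\beta+\delta)$ has components strictly decreasing and strictly negative, and set $\gamma=w\circ\beta$. Combining the bound $|(\gamma+\delta)_i|\leq n+m$ with the strict chamber inequalities yields $0\leq\gamma_i\leq n$ and $\gamma_i\geq\gamma_{i+1}$, so $\gamma\in\cP_{m,n}$. The equality $\sv_\beta=\eps(w)\sv_\gamma$ follows from (1). The most delicate point is the bookkeeping tying the uniform bound $|(\beta+\delta)_i|\leq n+m$ together with the chamber choice to the precise partition constraints defining $\cP_{m,n}$.
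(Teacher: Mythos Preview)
Your proof is correct and, for part (1), essentially identical to the paper's: both reduce to the simple reflections, handle $s_i$ ($i<m$) via the observation that row $i$ of $\sV(\beta)$ depends only on $\beta_i-i$, and treat $s_m$ using the palindromic symmetry $\se_{n+k}=\se_{n-k}$.

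For part (2) the two arguments diverge slightly in organisation. The paper proceeds geometrically: it identifies the target chamber as $t_{-\delta}(\sI(\cC_0))$, noting that $\sI\in W$ sends $\Delta$ to $\{\alpha_1,\ldots,\alpha_{m-1},-2\eps_1\}$, moves $\beta$ there by transitivity of the dot action, and reads off $\gamma_i\geq\gamma_{i+1}$ and $\gamma_1\leq n$ from the wall inequalities. Your route is more hands-on: you first show by direct row analysis that $\sv_\beta=0$ whenever $\beta+\delta$ lies on a reflecting hyperplane or has some coordinate of absolute value exceeding $n+m$, and then build $w$ explicitly as the signed permutation that sorts $\beta+\delta$ into strictly decreasing, strictly negative coordinates. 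Both are standard straightening arguments; yours has the minor advantage of making the lower bound $\gamma_m\geq 0$ fully explicit (via $|(\gamma+\delta)_m|\leq n+m$), a point the paper's chamber computation does not spell out.
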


\begin{proof} We have $\delta= (-n-1,\ldots,-n-m)$. Recall that the Weyl group $W$ is generated by the set $\{s_1,\ldots,s_{n-1},s_n\}$ where $s_i$ acts on $V$ by permuting the $i$ and the $i+1$-th coordinates and $s_n$ acts by changing  the sign of the $n$-th coordinate.
\begin{enumerate}
\item To prove this assertion, it is enough to show that 
$$\sv_{s_i\circ \beta} = -\sv_{\beta} \text{ for all $1\leq i \leq n-1$}\qu{and} \sv_{s_{n}\circ \beta}   = -\sv_{\beta}.$$
Let $\beta = (\beta_1,\ldots,\beta_m)\in \Z^m$ and $i\in \{1,\ldots,m-1\}$. We have 
\begin{align*}
s_i\circ\beta &= s_i(\beta+\delta) - \delta\\
 &= (\beta_1+\delta_1,\ldots,\beta_{i+1}+\delta_{i+1},\beta_{i}+\delta_{i},\ldots,\beta_m+\delta_m) - (\delta_1,\ldots,\delta_m)\\
  &= (\beta_1,\ldots,\beta_{i+1}+\delta_{i+1}-\delta_i,\beta_{i}+\delta_{i}-\delta_{i+1},\ldots,\beta_m)\\
    &= (\beta_1,\ldots,\beta_{i+1} - 1,\beta_{i}+1,\ldots,\beta_m).
\end{align*}
The matrix $\sV(s_i\circ \beta)$ only differs from the matrix  $\sV(\beta)$ on rows $i$ and $i+1$. We have 
\begin{center}
\begin{minipage}{0.45\linewidth}
\begin{align*}
[\sV(s_i\circ \beta)]_{i,j} &= \se_{[s_i\circ \beta]_i - (i+j)} - \se_{[s_i\circ \beta]_i - (i-j)}\\
&= \se_{\beta_{i+1} - 1 - (i+j)} - \se_{\beta_{i+1} - 1 - (i-j)}\\
&= \se_{\beta_{i+1} - \left((i+1)  + j\right)} - \se_{\beta_{i+1} -((i+1)-j)}\\
& = [\sV(\beta)]_{i+1,j} 
\end{align*}
\end{minipage}
\begin{minipage}{0.45\linewidth}
\begin{align*}
[\sV(s_i\circ \beta)]_{i+1,j} &= \se_{[s_i\circ \beta]_{i+1} - (i+1+j)} - \se_{[s_i\circ \beta]_{i+1} - (i+1-j)}\\
&= \se_{\beta_i+1 - (i+1+j)} - \se_{\beta_i+1 - (i+1-j)}\\
&= \se_{\beta_i - (i+j)} - \se_{\beta_i - (i-j)}\\
& = [\sV(\beta)]_{i,j} 
\end{align*}
\end{minipage}
\end{center}
Hence the result on the determinant. 

\medskip

Next we have 
\begin{align*}
s_m\circ\beta &= s_m(\beta+\delta) - \delta\\
 &= (\beta_1+\delta_1,\ldots,\beta_{m-1}+\delta_{m-1},-\beta_m-\delta_m) - (\delta_1,\ldots,\delta_m)\\
  &= (\beta_1,\ldots,\beta_{m-1},-\beta_m-2\delta_m)\\
  &= (\beta_1,\ldots,\beta_{m-1},-\beta_m +2n +2m).
\end{align*}
The matrix $\sV(s_n\circ \beta)$ only differs from the matrix  $\sV(\beta)$ on rows $m$ and we have 
\begin{align*}
[\sV(s_m\circ \beta)]_{m,j} &= \se_{[s_n\circ \beta]_m - (m+j)} - \se_{[s_i\circ \beta]_m - (m-j)}\\
&= \se_{-\beta_m +2n +2m - (m+j)} - \se_{-\beta_m +2n +2m - (m-j)}\\
&= \se_{n  + (n -\beta_m  +m - j)} - \se_{n + (n -\beta_m +m+j)}\\
&= \se_{n  - (n -\beta_m  +m - j)} - \se_{n - (n -\beta_m +m+j)}\tag{by the previous proposition}\\
&= \se_{\beta_m   - m + j} - \se_{\beta_m - m -j}\\
&= -[\sV(\beta)]_{m,j} 
\end{align*}
hence the result on the determinant. 

\item
Let $\beta\in \Z^m$ be such that $\sv_\beta \neq 0$. The "dot action" is simply the action of $W$ translated by $-\delta$.
This action is transitive on the set of Weyl chambers centered at $\delta$. The set of Weyl chambers centered at $-\delta$ is the set of Weyl chambers centered at $0$ translated by $-\delta$. Now the set of Weyl chambers centered at 0 is parametrised by the set of simple system of the form $w\Delta$ where $w\in W$. More precisely, the Weyl chamber $w({\cal C}_0)$ is
$$w({\cal C}_0) = \{x\in V\mid \scal{x}{\al} > 0 \text{ for all $\al\in w(\Delta)$}\}.$$
We have $\sI\in W$ and 
$$\sI(\Delta) = \{\sI(\al_1),\ldots,\sI(\al_{m-1}),\sI(2\eps_m)\} = \{\al_1,\ldots,\al_{m-1},-2\eps_1\}.$$
As a consequence the Weyl chamber $t_{-\delta}(\sI({\cal C}_0))$ is defined by 
$$x\in t_{-\delta}(\sI({\cal C}_0)) \Leftrightarrow  \scal{x}{\al^\vee} > \scal{-\delta}{\al^\vee} \text{ for all $\al\in \sI(\Delta)$}.$$
By transitivity of the dot action, there exists $w\in W$ such that $\gamma = w\circ \beta\in t_{-\delta}(\sI({\cal C}_0))$. Further, since we assumed that $\sv_\beta\neq 0$, $\gamma$ cannot lie on the wall of the chamber $t_{-\delta}(\sI({\cal C}_0))$.
The equations above for the root $\al_i\in \sI(\Delta)$ and $-2\eps_1\in \sI(\Delta)$ yield (using $\al_i^\vee = \al_i$ and $(-2\eps_1)^\vee = -\eps_1$)
$$\scal{\gamma}{\al_i}>\scal{-\delta}{\al_i} = -\delta_i + \delta_{i+1} = -1\qu{and} \scal{\gamma}{-\eps_1}>\scal{-\delta}{-\eps_1} = -(n+1)$$ 
in other words 
$$\gamma_i - \gamma_{i+1}\geq 0\qu{and} \gamma_1\leq n$$ 
which was what we were looking for, since $\sv_\beta = \eps(w)\sv_{\gamma}$. 
\end{enumerate}
 \end{proof}

We are now ready to prove the Howe duality.

\medskip

\textit{Proof of \Cref{Thm_Howe_duality}.} 
Let $\mu\in\cP_{n,m}$.  First we have 
\begin{align*}
\sE(\sx^{\mu'} ) &= \sE\left(\Delta^C_m \cdot\dfrac{1}{\Delta^C_m}\cdot  \sx^{\mu'}\right) \\
&=\sE\left(\Delta^C_m \sum_{\beta\in \Z^m} \wP(\beta) \sx^{\beta+\mu'}\right)\\
&= \sum_{\beta\in \Z^m} \wP(\beta) \sE\left(\Delta^C_m\sx^{\beta+\mu'}\right)\\
&= \sum_{\beta\in \Z^m} \wP(\beta) \sv_{\beta+\mu'}\\
&= \sum_{\gamma\in \cP_{m,n}} \sum_{w\in W}\eps(w)\wP(w\circ\gamma - \mu' ) \sv_{\gamma} \\
&= \sum_{\gamma\in \cP_{n,m}} \sum_{w\in W}\eps(w)\wP(w\circ \la' - \mu' ) \fs_{\la}.
\end{align*}
This shows that 
$$u_{\la,\mu'} = \sum_{w\in W}\eps(w)\wP(w\circ \la' - \mu' ).$$ 
Next, using the fact that $\sI(\delta_{m,n}) = \rho + n\cdot \om_m$ and the equality $\wP(\beta) = \cP(\sI(\beta))$ we get
\begin{align*}
\sum_{w\in W}\eps(w)\wP(w\circ \la' - \mu' ) &=  \sum_{w\in W}\eps(w)\cP(\sI(w\circ \la') - \sI(\mu') )\\
 &=  \sum_{w\in W}\eps(w)\cP(\sI(w(\la'+\delta_{m,n}) - \delta_{m,n}) - \sI(\mu') )\\
 &=  \sum_{w\in W}\eps(w)\cP(w(\sI(\la')+\sI(\delta_{m,n})) - \sI(\delta_{m,n}) - \sI(\mu') )\\
 &=  \sum_{w\in W}\eps(w)\cP(w(\sI(\la')+\rho + n\cdot \om_m) - \rho - n\cdot \om_m - \sI(\mu') )\\
  &=  \sum_{w\in W}\eps(w)\cP(w(\sI(\la') + n\cdot \om_m + \rho)  -(\sI(\mu') + n\cdot \om_m) -\rho)\\
  &=  \sum_{w\in W}\eps(w)\cP(w(\widehat{\la} + \rho)  -\widehat{\mu}-\rho)\\
&= K^{C_m}_{\widehat{\la},\widehat{\mu}}
\end{align*}
as desired. 
\hfill
$\square$


\section{Combinatorial Howe duality}

\label{Sec_Howe_by_crystals}

\newcommand{\sC}{\mathscr{C}}
\newcommand{\cB}{\mathcal{B}}
\newcommand{\cK}{\mathcal{K}}
\newcommand\sesqui{1.5}

The goal of this section is to give a bijective proof of \Cref{Thm_Howe_duality}.
This is achieved by establishing in \Cref{comb_duality_C} a combinatorial duality between 
a certain set of tensor products of type $C_n$ columns on the one hand,
and a set of type $C_m$ tableaux called \textit{King tableaux} on the other hand. To make this duality consistent with the usual convention on the combinatorial objects that we shall need (tensor products of crystals and King tableaux) it will be convenient to use the following realisation of the root system of type $C_n$.

\medskip

Let $n\in\Z_{\geq 1}$ and consider the type $C_n$ alphabet 
$$\sC_n=\left\{ \overline{n} < \cdots < \overline{1} < 1<\cdots < n\right\}.$$
This enables us to realise the  root system of type $C_n$ by setting
$$
\left\{
\begin{array}{l}
\al_i = \eps_{\overline{i}}-\eps_{\overline{i+1}} \quad  \text{ for } i=1,\ldots, n-1
\\
\al_0 = 2\eps_{\overline{1}}.
\end{array}
\right.
$$
for the simple roots, where
$\eps_{\overline{i}} =-\eps_i$ for all $i=1,\ldots, n$,
and 
$$\om_i=\eps_{\overline{n}}+\cdots + \eps_{\overline{i+1}} \quad \text{ for } i=0,\ldots, n-1.$$

\medskip

\Yvcentermath1

A column of height $k$ on $\sC_n$ (also called a column of type $C_n$) 
is a subset $c$ of $\sC_n$ of cardinality $k$, which we
represent by the Young tableau of shape $(1^k)$ filled by the elements of $c$, increasing from top to bottom. We will write $|c| = k$.
For any column $c$ on $\sC_n$, and for all $i=1,\ldots, n$ 
let
$$N_i(c)=\left|\{ x\in c \mid  x\leq \overline{i} \text{ or } x\geq i\}\right|.$$

\begin{Def}\label{Def_admissible}
A column $c$ on $\sC_n$  is called \textit{$n$-admissible} if 
$N_i(c)\leq n-i+1$ for all $i=1,\ldots, n$.
\end{Def}

\begin{Exa}
The set $c=\{\overline{2},\overline{1},1,3\}=
\scriptsize
\gyoung(<\overline{2}>,<\overline{1}>,1,3)
$
is a column on $\sC_n$ for all $n\geq 3$, and we have $|c|=4$.
It is not $3$-admissible since
$N_1(c)= \left| \{ 3,\overline{2},\overline{1},1\}\right| =4 > 3 -1+1$.
However, $c$ is $n$-admissible for $n\geq 4$.
\end{Exa}

Let us recall the crystal structure on the set of columns
of a given height due to \cite[Section 4.3]{KN94}.
First, 
columns of height $1$ realise the crystal of the vector representation of type $C_n$ as follows:
\begin{center}
\begin{tikzpicture}
\node (a) at (0,0) {$\scriptsize\gyoung(<\overline{n}>)$};
\node (b) at (1.5,0) {$\cdots$};
\node (c) at (3,0) {$\scriptsize\gyoung(<\overline{1}>)$};
\node (d) at (4.5,0) {$\scriptsize\gyoung(<1>)$};
\node (e) at (6,0)  {$\cdots$};
\node (f) at (7.5,0) {$\scriptsize\gyoung(<n>)$};

\draw[->] (a) --  node[pos=0.5,above]{\tiny $n-1$} (b);
\draw[->] (b) --  node[pos=0.5,above]{\tiny $1$} (c);
\draw[->] (c) --  node[pos=0.5,above]{\tiny $0$} (d);
\draw[->] (d) --  node[pos=0.5,above]{\tiny $1$} (e);
\draw[->] (e) --  node[pos=0.5,above]{\tiny $n-1$} (f);
\end{tikzpicture}
\end{center}
where the arrow labeled by $i$ denotes the action of the Kashiwara crystal operator $ f_i$.
In other terms, since 
$$\wt\left({\scriptsize\gyoung(<x>)}\right)=\sum_{i=1}^{n}a_i\eps_{\overline{i}} \text{\quad where \quad}
a_i=
\left\{
\begin{array}{rl}
1 & \text{\quad if } x=\overline{i}
\\
-1 & \text{\quad if } x=i
\\
0 & \text{\quad  otherwise,}
\end{array}
\right.
$$
this realises the crystal $ B(\om_{n-1})$ of the fundamental representation $V(\om_{n-1})$. 
To get the crystal structure on any tensor power $B(\omega_{n-1})^{\otimes\ell}$, we use Kashiwara's tensor product rule. 
It works as follows. 
Each vertex
$b={\scriptsize\gyoung(<x_1>)\otimes\gyoung(<x_2>)\otimes\cdots\otimes\gyoung(<x_\ell>)}\in B(\omega_{n-1})^{\otimes\ell}$ can be identified with its word
$\w=x_{1}\cdots x_{\ell}$ of length $\ell$ on $\sC_n$.
Now,
label each letter of $\w$ in $\{\overline{i+1},i\}$ (resp. in
$\{\overline{i},i+1\}$) with a symbol $+$ (resp. $-$) and ignore the
others. 
Let $\w_{i}$ be the word in the symbols $+$ and $-$ so obtained.
Bracket recursively all possible $+-$ in $\w_{i}$ (forgetting the previously bracketed symbols).
Then $ f_i \w$ (resp. $ e_i \w$) is obtained by applying $ f_i$ (resp. $ e_i$)
to the letter of $\w$ which contributes as the 
leftmost unbracketed $+$ (resp. the rightmost unbracketed $-$).
If this symbol does not exist, we set $ f_i \w= 0$ (resp. $ e_i \w=0$).
To compute $ f_{0}\w$ (resp. $ e_0 \w$), one proceeds similarly
but this time by encoding only the letters $\overline{n}$ in $\w$ by a $+$ and
the letters $n$ by a $-$. 

\begin{Exa}
\label{Ex_crystalrule1} Let $n=2$ and $\w=\bar{1}2\bar{2}1221\bar{1}\bar{2}$. Then
$\w_{1}=--++--+-+$ and we get the bracketing $\w_1=--(+(+-)-)(+-)+$. This gives
$ f_{1} \w= \bar{1}2\bar{2}1221\bar{1}\bar{1}$ and $  e_1 \w =\bar{1}1\bar{2}1221\bar{1}\bar{2}$.
\end{Exa}

One can now interpret
any column $c=\scriptsize\gyoung(<x_1>,|\sesqui\vdts,<x_\ell>)$ of height $\ell$ as the element
$\scriptsize\gyoung(<x_1>)\otimes\cdots \otimes \gyoung(<x_\ell>) \in  B(\om_{n-1})^{\otimes \ell}$
In particular, for all $i=0, \ldots, n-1$, 
the column 
$c=\scriptsize\gyoung(<\overline{n}>,|\sesqui\vdts,<\overline{n\text{-}i}>)$ is a highest weight vertex of weight $\om_{n-i-1}$,
and therefore generates the crystal $ B(\om_{n-i-1})$ of the fundamental representation $V(\om_{n-i-1})$.
Note that the shape of $c$ is given by the partition $(1^{i+1})$, that is,
the coordinates of $\wt(c)=\om_{n-i-1}$ in the basis $(\eps_{\overline{n}},\ldots, \eps_{\overline{1}}).$
The following result is due to \cite[Section 4.5]{KN94}, see also \cite[Proposition 4.2.1]{Le07} for the reformulation using \Cref{Def_admissible}.

\begin{Th}
The vertices of $B(\om_{n-i-1})$ are the $n$-admissible columns.
\end{Th}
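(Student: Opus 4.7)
The plan is to realize $B(\om_{n-i-1})$ as the connected component of the highest weight column $c_0 = (\overline{n}, \overline{n-1}, \ldots, \overline{n-i})$ inside $B(\om_{n-1})^{\otimes(i+1)}$, and to show this connected component is exactly the set of $n$-admissible columns of height $i+1$. First, I would verify that $c_0$ is a highest weight vector of weight $\om_{n-i-1}$ (which is immediate for the weight, and follows for $e_j c_0 = 0$ from a short inspection of the $+/-$ tensor product rule at each label $j$), and that $c_0$ is itself $n$-admissible.

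The main technical step is to prove the stability of $n$-admissibility under the action of the Kashiwara operators $f_j$ and $e_j$. Since a column contains each letter of $\sC_n$ at most once, the $+/-$ sequence at label $j$ has at most two $+$'s (from $\overline{j+1}, j$) and at most two $-$'s (from $\overline{j}, j+1$), reducing the analysis to a small finite enumeration. Only the case where $f_j$ replaces $j$ by $j+1$ can increase some $N_k$, and then only $N_{j+1}$ increases by $1$. I would split this case according to whether $\overline{j}$ belongs to $c$: when $\overline{j} \notin c$ the bound $N_{j+2}(c) \leq n-j-1$ from admissibility suffices; when $\overline{j} \in c$ (forced by the bracketing whenever $\overline{j+1}$ is also present in $c$) the bound at level $j+2$ is insufficient and one must instead invoke admissibility at level $j$: the double contribution of $\overline{j}$ and $j$ to $N_j(c) \leq n-j+1$ provides exactly the two units of slack needed to keep $N_{j+1}(f_j c) \leq n-j$. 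The operator $f_0$ swaps $\overline{1}$ and $1$, which preserves every $N_k$ trivially. The argument for $e_j$ is symmetric.

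Stability yields that the connected component of $c_0$ inside the subcrystal of $n$-admissible columns is isomorphic to $B(\om_{n-i-1})$. For the reverse inclusion I would show that $c_0$ is the unique $n$-admissible column of height $i+1$ satisfying $e_j c = 0$ for every $j$. The vanishings $e_j c = 0$ for $j \geq 1$ in particular imply that $\overline{j} \in c$ forces $\overline{j+1} \in c$ (for $j \leq n-1$), so the barred part of $c$ is upward-closed. Then an induction on $j$ shows that no unbarred letter lies in $c$: assuming $1, \ldots, j-1 \notin c$ while $j \in c$, inspection of the $(j-1)$-bracketing forces $\overline{j} \in c$, and combined with upward-closure this gives $\overline{j}, \overline{j+1}, \ldots, \overline{n}, j \in c$, whence $N_j(c) \geq n-j+2 > n-j+1$, contradicting admissibility. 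Hence $c$ consists only of barred letters, and upward-closure together with $|c|=i+1$ pins down $c = c_0$.

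The main obstacle is the stability step, specifically the case where $f_j$ acts on $j$ while both $\overline{j+1}$ and $\overline{j}$ are present in $c$: the delicate point is to use admissibility at level $j$ (benefiting from the contributions of $\overline{j}$ and $j$) rather than the naive level $j+1$. An alternative finish for the reverse inclusion is a direct dimension count, since the number of $n$-admissible columns of height $i+1$ equals $\binom{2n}{i+1} - \binom{2n}{i-1} = \dim V(\om_{n-i-1})$.
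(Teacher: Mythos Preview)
The paper does not prove this theorem: it is quoted as a known result, attributed to \cite[Section~4.5]{KN94} with the reformulation via admissibility in \cite[Proposition~4.2.1]{Le07}. So there is no ``paper's own proof'' to compare against.

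Your argument is correct and is essentially the standard one underlying those references: closure of the set of $n$-admissible columns under the crystal operators, together with uniqueness of the admissible highest-weight vertex. Two small points worth spelling out in a final version. First, in your case ``$\overline{j}\notin c$'' you are tacitly using that then also $\overline{j+1}\notin c$ and $j+1\notin c$ (both forced by the bracketing when $f_j$ acts on the letter $j$), which is exactly what gives $N_{j+1}(c)=N_{j+2}(c)$; and the boundary $j=n-1$ deserves a one-line check. Second, for the reverse inclusion you are using that a subset of a finite highest-weight crystal which is stable under all $e_j$ and $f_j$ is a union of connected components; this is immediate but should be said. The dimension-count alternative $\binom{2n}{i+1}-\binom{2n}{i-1}=\dim V(\omega_{n-i-1})$ is also legitimate, provided the right-hand side is obtained independently (Weyl dimension formula), not from the crystal description you are proving.
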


\begin{Exa} Take $n=3$ and $i=1$. We realise the fundamental crystal $ B(\om_1)$ by $n$-admissible columns of height $2$ as follows.
Note that only ${\scriptsize\gyoung(<\overline{3}>,<3>)}$ is not $n$-admissible.
\begin{center}
\begin{tikzpicture}
\node (11) at (0,0) {$\scriptsize\gyoung(<\overline{3}>,<\overline{2}>)$};
\node (12) at (1.5,0) {$\scriptsize\gyoung(<\overline{3}>,<\overline{1}>)$};
\node (13) at (3,0) {$\scriptsize\gyoung(<\overline{3}>,<1>)$};
\node (14) at (4.5,0) {$\scriptsize\gyoung(<\overline{3}>,<2>)$};

\node (22) at (1.5,-2) {$\scriptsize\gyoung(<\overline{2}>,<\overline{1}>)$};
\node (23) at (3,-2) {$\scriptsize\gyoung(<\overline{2}>,<1>)$};
\node (24) at (4.5,-2) {$\scriptsize\gyoung(<\overline{2}>,<2>)$};

\node (33) at (3,-4) {$\scriptsize\gyoung(<\overline{1}>,<1>)$};
\node (34) at (4.5,-4) {$\scriptsize\gyoung(<\overline{2}>,<3>)$};

\node (43) at (3,-6) {$\scriptsize\gyoung(<\overline{1}>,<2>)$};
\node (44) at (4.5,-6) {$\scriptsize\gyoung(<\overline{1}>,<3>)$};

\node (53) at (3,-8) {$\scriptsize\gyoung(<1>,<2>)$};
\node (54) at (4.5,-8) {$\scriptsize\gyoung(<1>,<3>)$};

\node (64) at (4.5,-10) {$\scriptsize\gyoung(<2>,<3>)$};

\draw[->] (11) --  node[pos=0.5,above]{\tiny $1$} (12);
\draw[->] (12) --  node[pos=0.5,above]{\tiny $0$} (13);
\draw[->] (13) --  node[pos=0.5,above]{\tiny $1$} (14);

\draw[->] (12) --  node[pos=0.5,left]{\tiny $2$} (22);
\draw[->] (13) --  node[pos=0.5,left]{\tiny $2$} (23);
\draw[->] (14) --  node[pos=0.5,left]{\tiny $2$} (24);

\draw[->] (22) --  node[pos=0.5,above]{\tiny $0$} (23);

\draw[->] (23) --  node[pos=0.5,left]{\tiny $1$} (33);
\draw[->] (24) --  node[pos=0.5,left]{\tiny $2$} (34);

\draw[->] (33) --  node[pos=0.5,left]{\tiny $1$} (43);
\draw[->] (34) --  node[pos=0.5,left]{\tiny $1$} (44);

\draw[->] (43) --  node[pos=0.5,left]{\tiny $0$} (53);
\draw[->] (44) --  node[pos=0.5,left]{\tiny $0$} (54);

\draw[->] (43) --  node[pos=0.5,above]{\tiny $2$} (44);

\draw[->] (53) --  node[pos=0.5,above]{\tiny $2$} (54);

\draw[->] (54) --  node[pos=0.5,left]{\tiny $1$} (64);
\end{tikzpicture}\end{center}
\end{Exa}

\newcommand{\cF}{\mathcal{F}}

Let us now fix $n,m\in\Z_{\geq 1}$. In the spirit of \cite[Section 2.2]{GL20},
we define a ``combinatorial Fock space'' which will naturally 
be endowed with a type $C_n$ crystal structure.    

\begin{Def}\label{KN_fock}
The \textit{Kashiwara-Nakashima (KN) Fock space} is the set
$$\cF_{n,m}=
\left\{  
c_1\otimes \cdots\otimes c_m \mid
c_j \text{ is a column on } \sC_n \text{ for all } j=1,\ldots, m
\right\}.$$
\end{Def}
We will be interested in a particular subset of the KN Fock space.
Denote by $\cC_{m,2n}$ the set of compositions
$\mu'=(\mu'_1,\ldots, \mu'_m)$ such that
$\max\left \{  \mu'_j \, ;\, 1\leq j\leq m \right\} \leq 2n$.
For $\mu'\in\cC_{m,2n}$, consider the set 
$$
 B_{\mu'}=\left\{ 
c_1\otimes \cdots \otimes c_m \in\cF_{n,m}
\mid |c_j|=\mu_j' \text{ for all } j=1,\ldots, m
\right\}.$$
By choosing to read the $m$ columns of any vertex of $B_{\mu'}$ \emph{first from left to right and next from top bottom}, we get an embedding of crystals
$ B_{\mu'}
\hookrightarrow
 B(\om_{n-1})^{\otimes |\mu|},$
and $B_{\mu'}$ realises the crystal of the representation 
$${\sf \Lambda}^{\mu'}_{2n,m}={\sf \Lambda}^{\mu'_1}(\C^{2n})\otimes \cdots \otimes {\sf \Lambda}^{\mu'_m}(\C^{2n})$$ of $\mathfrak{sp}_{2n}(\C)$ (which we have encountered in \Cref{Sec_HoweDuality}).
Moreover, we have the decomposition as direct sum of crystals
$$\cF_{n,m}=\bigoplus_{\mu'\in\cC_{m,2n}} B_{\mu'}.$$
Note that in the basis $(\eps_{\overline{n}},\ldots, \eps_{\overline{1}})$, we have
$\wt(b)=(a_n,\ldots, a_1)$ where, for all $i=1,\ldots, n$,
$$a_i=\# \text{ entries } \overline{i} \text{ in } b  - \# \text{ entries } i  \text{ in } b.$$
Inside $ B_{\mu'}$, we consider the following two subsets
$$ B_{\mu'}^\hw = \left\{ b\in  B_{\mu'} \mid  e_i(b) = 0 \text{ for all } i=0,\ldots, n-1 \right\}
\text{ \quad and \quad} B_{\mu',\la}^\hw = \left\{ b\in  B_{\mu'}^\hw \mid \wt(b)=\la  \right\}$$
for any $\la\in\cP_{n,m}$.
In other terms, $ B_{\mu'}^\hw$ is the set of highest weight vertices in $B_{\mu'}$.

\medskip

Let us recall briefly how to check that an element
$b\in B_{\mu'}$ is in $ B_{\mu'}^\hw$.
Let $\w$ be the word obtained by reading $b$ (as explained above). 
Define similarly the \textit{weight} of a word on $\sC_n$ to be the $n$-tuple whose
$i$-th coordinate is the difference between the number of $\overline{i}$'s
and the number of $i$'s.
Then $b$ is a highest weight vertex if and only if the weight of each prefix of $\w$ is 
a partition.

\begin{Exa}\label{Exa_hwv}
Let $n=4$, $m=3$, $\mu'=(2,3,1)$ and
$b=
\scriptsize\gyoung(<\overline{4}>,<\overline{3}>)
\otimes
\scriptsize\gyoung(<\overline{2}>,<\overline{1}>,1)
\otimes
\scriptsize\gyoung(<\overline{4}>)
\in  B_{\mu'}$, 
so that $\w=\bar{4}\bar{3}\bar{2}\bar{1}1\bar{4}$.
The prefixes of $\w$ are 
$
\bar{4},
\bar{4}\bar{3},
\bar{4}\bar{3}\bar{2},
\bar{4}\bar{3}\bar{2}\bar{1},
\bar{4}\bar{3}\bar{2}\bar{1}1,
\bar{4}\bar{3}\bar{2}\bar{1}1\bar{4}
$
with respective weights
$
(1,0,0,0),
(1,1,0,0),
(1,1,1,0),
(1,1,1,1),
(1,1,1,0),
(2,1,1,0),
$
which are all partitions, therefore $b$ is a highest weight vertex. 
More precisely, $b\in B_{(2,3,1),(2,1,1,0)}^\hw$.
\end{Exa}

\medskip

Consider now the alternative alphabet
$\sC_m^\ast = \left\{ 1<\overline{1}<\cdots <m<\overline{m}\right\}$.
Similarly to \Cref{KN_fock}, we can consider columns on $\sC^\ast_m$
and construct another combinatorial Fock space.

\newcommand{\dcF}{\dot{\cF}}

\begin{Def}\label{King_fock}
The \textit{King Fock space} is the set
$$\dcF_{m,n}=
\left\{  
d_1\otimes \cdots\otimes d_n \mid
d_i \text{ is a column on } \sC^\ast_m \text{ for all } i=1,\ldots, n
\right\}.$$
\end{Def}

\begin{Rem}\label{rem_King_fock}
Unlike $\cF_{n,m}$, there is no simple $C_m$-crystal structure on $\dcF_{m,n}$.
This will be discussed in more detail in \Cref{Sec_bicrystals}.
\end{Rem}

We are ready to define a duality
$$
\begin{array}{ccc}
\cF_{n,m} &\lra & \dcF_{m,n}\\
b & \longmapsto & b^\ast.
\end{array}
$$
Let $\mu'$ be a composition as before and $b=c_1\otimes  \cdots \otimes c_m \in  B_{\mu'}$.
For each $j=1,\ldots, m$,
let 
$$
\begin{array}{l}
\tc_j = \{ 1\leq x \leq n \mid \overline{x}\notin c_j\} \qu{and}
\\
\tc_{\overline{j}} = \{ 1\leq x \leq n \mid x\in c_j\} = \{1,\ldots, n\}\cap c_j,
\end{array}
$$
and set
$$
\tb=\tc_{1}\otimes \tc_{\overline{1}}\otimes\cdots\otimes \tc_{m}\otimes\tc_{\overline{m}}.
$$
Then $\tb$ is a tensor product of $2m$ columns of type $A_{n-1}$.
We can now apply the duality $\ast$ of \cite{GL20} to the element $\tb$ to get 
an element $b^\ast\in\dcF_{m,n}$.
More precisely, we set
$$b^\ast= d_1\otimes\cdots\otimes d_n \quad \text{ where } d_i=\left\{ x\in \sC_m^\ast    \mid i\in \tc_x\right\} \text{ for all } i=1,\ldots, n.$$

\begin{Exa}
Let $n=5$ and 
$b=c_1\otimes c_2 =
\scriptsize\gyoung(<\overline{3}>,<\overline{2}>,4,5)
\otimes
\scriptsize\gyoung(<\overline{5}>,<\overline{2}>,<\overline{1}>,1,2,4)$.
Then we have
$\tc_1 = \scriptsize\gyoung(1,4,5)$,
$\tc_{\overline{1}} = \scriptsize\gyoung(4,5)$,
$\tc_2 = \scriptsize\gyoung(3,4)$ and
$\tc_{\overline{2}} = \scriptsize\gyoung(1,2,4)$.
We obtain the following product of columns of type $A_4$:
$$\tb =  \tc_1\otimes \tc_{\overline{1}} \otimes \tc_2\otimes \tc_{\overline{2}} 
= \scriptsize\gyoung(1,4,5) \otimes \gyoung(4,5)\otimes\gyoung(3,4) \otimes \gyoung(1,2,4).$$
Finally, we find 
 $$b^\ast
= \scriptsize\gyoung(1,<\overline{2}>) \otimes \gyoung(<\overline{2}>)\otimes\gyoung(2) \otimes \gyoung(1,<\overline{1}>,2,<\overline{2}>)\otimes \gyoung(1,<\overline{1}>).$$
\end{Exa}

In particular, \cite[Proposition 2.17]{GL20} ensures that if $b\in  B_{\mu'}^\hw$,
then $b^\ast$ is a semistandard tableau (on the alphabet $\sC_m^\ast$),
where we identify a tableau with the tensor product of its columns (from left to right with our convention). 
In fact, we have more. In order to state the following theorem, recall that 
a tableau with entries in $\sC_m^\ast$ is called a \textit{King tableau}
if
\begin{enumerate}
\item it is semistandard, and 
\item if each entry in row $j$ is greater than or equal to $j$ for every index $j$.
\end{enumerate}
The \textit{weight} of a tableau $t$ with entries in $\sC_m^\ast$ is the sequence $\wt(t)=(a_m,\ldots, a_1)$
where, for all $j=1,\ldots, m$, 
$$a_j=\# \text{ entries } j \text{ in } b  - \# \text{ entries } \overline{j}  \text{ in } b.$$

In order to state the main result of this section, recall the involutive map $\beta'\mapsto \widehat{\beta}$
defined in \Cref{Sec_HoweDuality}.
Moreover, let us denote by $\cK_{\hla,\hmu}$ the set of King tableaux of shape $\hla$ and weight $\hmu$. 

\begin{Th}[Combinatorial Howe duality]\label{comb_duality_C}
We have a bijection
$$
\begin{array}{ccc}
 B_{\mu',\la}^\hw & \lra & \cK_{\hla,\hmu}
\\
b&\longmapsto & b^\ast.
\end{array}
$$
\end{Th}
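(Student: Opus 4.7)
The bijection is built by factoring through the type $A_{n-1}$ duality of \cite{GL20}: to any $b=c_1\otimes\cdots \otimes c_m\in B_{\mu'}$ one associates the tensor $\tb=\tc_1\otimes \tc_{\overline{1}}\otimes \cdots \otimes \tc_m\otimes \tc_{\overline{m}}$ of $2m$ type $A_{n-1}$ columns, and sets $b^\ast=\tb^\ast$. My plan is to show in three stages that this map (i) sends the type $C_n$ highest weight condition to semistandardness on $\sC_m^\ast$, (ii) matches shapes and weights via the involution $\wh{\,\cdot\,}$, and (iii) converts the remaining condition $\te_0 b=0$ into the King condition. Bijectivity will then follow by inverting the $A$-type duality.

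For (i), the Kashiwara operators $\te_1,\ldots,\te_{n-1}$ of type $C_n$ act on the reading word of $b$ by the same bracketing rules as the corresponding type $A_{n-1}$ operators, so any $b\in B_{\mu'}^{\hw}$ is in particular an $A_{n-1}$ highest weight vertex. A direct comparison of the bracketings on $b$ and on $\tb$ shows that this property is equivalent to $\tb$ being $A_{n-1}$ highest weight in the associated tensor product of column crystals. Then \cite[Proposition 2.17]{GL20} yields immediately that $b^\ast=\tb^\ast$ is a semistandard tableau on $\sC_m^\ast$.

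For (ii), the height of column $d_i$ in $b^\ast$ reads off the definition as $|d_i|=\#\{j:\overline{i}\notin c_j\}+\#\{j:i\in c_j\}$. Substituting the weight identity $\wt(b)=\la$ and the shape constraint $|c_j|=\mu_j'$, a short computation identifies $|d_i|$ with the $i$-th part of the conjugate partition $\hla'$, giving $\sh(b^\ast)=\hla$. The analogous count of entries in $b^\ast$ labelled $j$ versus $\overline{j}$ yields $\wt(b^\ast)=\hmu$.

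The main obstacle is (iii): translating $\te_0 b=0$ into the King condition. Since $\te_0$ acts only on the letters $\overline{1}$ and $1$, its vanishing means that every prefix of the reading word of $b$ contains at least as many $\overline{1}$'s as $1$'s. Under the construction $b\mapsto \tb$, each $\overline{1}$ in some $c_j$ forbids $1\in\tc_j$ while each $1$ in $c_j$ produces $1\in\tc_{\overline{j}}$, so the $\te_0$ bracketing condition controls precisely the positions of the letter $1$ across the columns $\tc_1,\tc_{\overline{1}},\ldots,\tc_m,\tc_{\overline{m}}$. Tracking this through the $\ast$-map of \cite{GL20}, which places a letter $x\in \sC_m^\ast$ in column $d_i$ exactly when $i\in \tc_x$, one verifies that no entry in row $j$ of $b^\ast$ can be smaller than $j$ in the $\sC_m^\ast$ order, which is the King condition. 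The inverse $b^\ast\mapsto b$ is then constructed by inverting the $A$-type duality on semistandard tableaux to recover $\tb$ and reassembling each $c_j=\{\overline{x}:x\in\{1,\ldots,n\}\setminus \tc_j\}\cup \tc_{\overline{j}}$; semistandardness and the King condition on $b^\ast$ translate back to $n$-admissibility of each $c_j$ and to $b\in B_{\mu',\la}^{\hw}$.
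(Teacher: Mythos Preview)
Your steps (i) and (ii) match the paper: the shape/weight identities are computed the same way, and semistandardness comes from \cite[Proposition~2.17]{GL20}. The divergence is in step (iii), where the paper proves ``$e_0 b=0 \Leftrightarrow b^\ast$ satisfies the King condition'' by induction on $|\mu|$ (deleting the last letter of $b$ and tracking the effect on $b^\ast$), whereas you attempt a direct argument. A direct argument does exist, but yours has a genuine gap.

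You correctly note that $e_0b=0$ is a prefix inequality on the letters $\overline{1},1$ in $b$, and that under the duality this governs precisely which letters $x\in\sC_m^\ast$ lie in the \emph{first} column $d_1$ of $b^\ast$ (since $x\in d_1\Leftrightarrow 1\in\tc_x$). But the King condition constrains \emph{all} rows, and you never explain why a condition on $d_1$ suffices. The missing step is: since $b^\ast$ is semistandard, rows are weakly increasing, so the minimum entry of row $j$ sits in column $d_1$; hence the King condition reduces to ``the $j$-th entry of $d_1$ is $\geq j$''. Once this is observed, a one-line count gives
\[
|\{x\in d_1:x\leq\overline{j}\}|=j-\sum_{k\leq j}\bigl([\,\overline 1\in c_k]-[1\in c_k]\bigr),
\]
so the reduced King condition is exactly the prefix inequality expressing $e_0b=0$. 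Without this reduction, your ``one verifies'' is an assertion rather than a proof. A smaller correction: in your discussion of the inverse map you claim the King condition translates back to $n$-admissibility of each $c_j$. This is wrong: elements of $B_{\mu'}$ are arbitrary columns on $\sC_n$ (the crystal models the full exterior power ${\sf\Lambda}^{\mu_j'}(\C^{2n})$, not the fundamental $\mathfrak{sp}_{2n}$-crystal). Bijectivity follows simply because $b\mapsto\tb$ and $\tb\mapsto b^\ast$ are bijections on the ambient Fock spaces, combined with the equivalences in (i)--(iii).
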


Let us justify the name of the previous theorem.
By general crystal theory, the cardinality of  $ B_{\mu',\la}^\hw$
equals the multiplicity of $V(\la)$ in 
${\sf \Lambda}^{\mu'}_{2n,m}$.
Moreover, it is known that the cardinality of $\cK_{\hla,\hmu}$ equals the weight multiplicity~$K^{C_m}_{\wh{\la},\wh{\mu}}$,
see \cite{K76}.
Therefore, the bijection of \Cref{comb_duality_C} permits to recover the Howe duality
(\Cref{Thm_Howe_duality}). 
In fact, we directly obtain the more general version where 
the heights of the columns in the tensor product are not necessarily decreasing 
(that is, using compositions instead of partitions).

\begin{Exa}
Let us go back to \Cref{Exa_hwv}, where we had $b\in B_{\mu',\la}^\hw$
with $\mu'=(2,3,1)$ and $\la=(2,1,1,0)$.
We compute
$\tb=
\scriptsize \gyoung(1,2) \otimes \emptyset
\otimes
\scriptsize \gyoung(3,4) \otimes \gyoung(1)
\otimes
\scriptsize \gyoung(1,2,3) \otimes \emptyset
$,
which yields
$b^\ast=
\scriptsize \gyoung(1,<\overline{2}>,3)
\otimes
\scriptsize \gyoung(1,3)
\otimes
\scriptsize \gyoung(2,3)
\otimes
\scriptsize \gyoung(3)
$, represented by the tableau
$$\scriptsize
\gyoung(1123,<\overline{2}>33,3),$$
which is a King tableau of shape $(4,3,1)=\hla$ and weight $(3,1,2)=\hmu$.
\end{Exa}

\begin{proof}
Let us start by proving that the duality $\ast$ intertwines shape and weight as claimed.
Write $\wt(b^\ast)=\overline{\mu}=(\overline{\mu}_m,\ldots, \overline{\mu}_1)$.
From the definition of $\ast$, we see that, for all $j=1,\ldots, m$,
\begin{align*}
\overline{\mu}_j 
& = |\tc_{\overline{j}}|-|\tc_j|
\\
& = \# \text{ unbarred entries in } c_j - (n- \# \text{ barred entries in } c_j)
\\
& = n-|c_j|
\\
& = n-\mu_j'.
\end{align*}
Therefore, we have $\overline{\mu}=\hmu$.
Similarly, if we write $b^\ast=d_1\otimes\cdots\otimes d_n$ and 
$\sh(b^\ast)=\overline{\la}=(\overline{\la}_1,\ldots, \overline{\la}_m)$,
we have, for all $i=1,\ldots, n$,
\begin{align*}
\overline{\la}_i' 
& = |d_i|
\\
&= \sum_{j=1}^m \left( \left( 1-\indic_{c_j}(\overline{i}) \right) + \indic_{c_j}(i) \right)
\\
&= m - \sum_{j=1}^m \left( \indic_{c_j}(\overline{i}) - \indic_{c_j}(i) \right)
\\
& = m -\left( \# \text{ entries } \overline{i} \text{ in } b - \# \text{ entries } i \text{ in } b \right)
\\
& = m -\la_i,
\end{align*}
and taking the transpose yields $\overline{\la}=\hla$.

\medskip

In particular, if $b\in  B_{\mu'}^\hw$ then $\hla$ is a partition.
In fact, $b\in  B_{\mu'}^\hw$ is a highest weight vertex for the parabolic $A_{n-1}$-action 
if and only if $b^\ast$ is semistandard on $\sC_n^\ast$ by \cite[Proposition 2.17]{GL20}.
The only thing that remains to be proved is that
$ e_0(b)=0$ if and only if $b^\ast$ satisfies the Condition (2) defining King tableaux.
We prove this by induction on $|\mu|$.

\medskip

If $|\mu|=1$ then 
the only element of $ B_{\mu'}^\hw$ is
$b=\scriptsize\gyoung(<\overline{n}>)$. 
We have $b^\ast=\scriptsize\gyoung(<1>)\otimes\cdots\otimes \gyoung(<1>)\otimes \emptyset$, 
which is the only King tableau of weight $\hmu=(n-1)$.

\medskip

Fix $r\in\Z_{\geq 1}$ and assume that the claim holds for all $a\in  B_\nu$ with $|\nu|=r$.
Let $b\in  B_{\mu'}$ with $|\mu|=r+1$. Let $x$ be the bottommost entry of the rightmost column of $b$, and
let $a$ be the element obtained by deleting $x$ from $b$.
Assume first that $a$ and $b$ have the same number of non-trivial columns, say $k$.
If $x=j\geq 1$, then by definition, $b^\ast$ is obtained by adding a $\overline{k}$ 
in the $j$-th column of $a^\ast$.
By the induction hypothesis applied to $a=a_1\otimes \cdots\otimes a_n$,
one sees that Condition (2) holds for $b^\ast$ unless $j=1$ and $a_1=\{1,2,\ldots, k\}$.
This is equivalent to saying that the contribution in $\eps_{\overline{1}}$ in $\wt(b)$ is negative.
If $x=\overline{j}\leq \overline{1}$, then $b^\ast$ is obtained by adding a $k$ 
in the $j$-th column of $a^\ast$. One sees that Condition (2) holds for $b^\ast$ if and only if it holds for $a^\ast$,
and that $ e_0(b)=0$ if and only if $ e_0(a)=0$, and we conclude using the induction hypothesis applied to $a$.

\medskip

Finally, if $a$ has $k$ non-trivial columns and $b$ has $k+1$ non-trivial columns,
one uses similar arguments to validate the induction step in this case too.
\end{proof}

\begin{Rem} We conclude this section by mentioning two related recent results.
\begin{enumerate} 
\item In \cite[Theorem 2.7]{Lee19}, Lee constructed a weight-preserving bijection between certain semistandard oscillating tableaux on the one hand and certain King tableaux
on the other hand.
In fact, the highest weight vertices of \Cref{comb_duality_C}
correspond to certain semistandard oscillating tableaux (as illustrated in 
\Cref{Exa_hwv}).
Therefore, \Cref{Thm_Howe_duality} gives a simple proof of \cite[Theorem 2.7]{Lee19}
just based on the combinatorics of the columns of type $C_n$.
\item In \cite{HK20}, Heo and Kwon studied the type $C$ Howe duality
by using a symplectic version of the RSK algorithm.
Their results also involve King tableaux and
other combinatorial objects (the spinor model).
In our approach, we expect \Cref{comb_duality_C} 
to be related to a symplectic RSK correspondence
arising from a bicrystal structure on tensor products
of admissible columns in the spirit of \cite[Theorem 2.25]{GL20},
see also \Cref{Rem_bicrystal_2}(1) of the upcoming section.
\end{enumerate}
\end{Rem}


\section{Bicrystals and charge}
\label{Sec_bicrystals}

As mentioned in \Cref{rem_King_fock}, $\dcF_{m,n}$
does not come with a natural type $C_m$ crystal structure.
In fact, even on the subset of King tableaux, finding such a crystal structure is 
a challenging problem, see \Cref{Rem_bicrystal_2}.
However, there is a natural type $A_{2m-1}$ crystal structure
on $\dcF_{m,n}$ 
induced from the crystal of the vector representation below
\begin{center}
\begin{tikzpicture}
\node (a) at (-4.5,0) {$\scriptsize\gyoung(<1>)$};
\node (b) at (-3,0) {$\scriptsize\gyoung(<\overline{1}>)$};
\node (c) at (-1.5,0) {$\scriptsize\gyoung(<2>)$};
\node (d) at (0,0) {$\scriptsize\gyoung(<\overline{2}>)$};
\node (e) at (1.5,0) {$\cdots$};
\node (f) at (3,0) {$\scriptsize\gyoung(<m>)$};
\node (g) at (4.5,0) {$\scriptsize\gyoung(<\overline{m}>)$\ .};

\draw[->] (a) --  node[pos=0.5,above]{\tiny $1$} (b);
\draw[->] (b) --  node[pos=0.5,above]{\tiny $\overline{1}$} (c);
\draw[->] (c) --  node[pos=0.5,above]{\tiny $2$} (d);
\draw[->] (d) --  node[pos=0.5,above]{\tiny  $\overline{2}$} (e);
\draw[->] (e) --  node[pos=0.5,above]{\tiny $\overline{m-1}$} (f);
\draw[->] (f) --  node[pos=0.5,above]{\tiny $m$} (g);
\end{tikzpicture}
\end{center}

To compute the $A_{2m-1}$-crystal structure on $\dcF_{m,n}$,
we use a different reading than the one used in \Cref{Sec_Howe_by_crystals}.
More precisely, we choose this time to read the columns of $b\in\dcF_{m,n}$ 
\emph{first from right to left and next from top to bottom}.
The action of the crystal operators, denoted by $\dot{f}_{j}$, $1\leq j\leq m$,
and $\dot{f}_{\overline{j}}$, $1\leq j\leq m-1$
(resp. $\de_j$ and $\de_{\overline{j}}$)
on the resulting word 
is computed by using the same bracketing procedure as in \Cref{Sec_Howe_by_crystals}
(illustrated in \Cref{Ex_crystalrule1}),
where we encode this time $j$ by $+$ and $\overline{j}$ by $-$ for $\df_j,\de_j$,
and $\overline{j}$ by $+$ and $j+1$ by $-$ for $\df_{\overline{j}}, \de_{\overline{j}}$
(and we ignore the other letters).
This is illustrated in the example below.

\begin{Exa}\label{Exa_Acrystal}
Take $n=3,m=2$ and 
$b^\ast=
\scriptsize \gyoung(<\overline{1}>,2)
\otimes
\scriptsize \gyoung(1,<\overline{1}>,<\overline{2}>)
\otimes
\scriptsize \gyoung(1,2).
$
Let us detail the computation of
$\df_{\overline{1}} b^\ast$.
Reading $b^\ast$ yields the word 
$\w_{\overline{1}} = 121\bar{1}\bar{2}\bar{1}2$.
Looking only at $\overline{1}$ and $2$, we get $\w_{\overline{1}}=-++-$
and the bracketing yields $\w_{\overline{1}}=-+(+-)$.
Thus we get $\df_{\overline{1}} b^\ast= 
\scriptsize \gyoung(<\overline{1}>,2)
\otimes
\scriptsize \gyoung(1,2,<\overline{2}>)
\otimes
\scriptsize \gyoung(1,2)
$.
Similarly, one checks that 
$\de_{\overline{1}} b^\ast= 
\scriptsize \gyoung(<\overline{1}>,2)
\otimes
\scriptsize \gyoung(1,<\overline{1}>,<\overline{2}>)
\otimes
\scriptsize \gyoung(1,<\overline{1}>)
$\, ,
$\df_{2} b^\ast= 
\scriptsize \gyoung(<\overline{1}>,<\overline{2}>)
\otimes
\scriptsize \gyoung(1,<\overline{1}>,<\overline{2}>)
\otimes
\scriptsize \gyoung(1,2)
$
and that
$\de_1 b^\ast=\de_2 b^\ast=\df_1 b^\ast = 0$.

\end{Exa}

In the case $m=1$, the duality $\ast$ intertwines the following two important properties.

\begin{Prop}\label{admissible_dual} Let $c$ be a column on $\sC_n$.
Then $c$ is $n$-admissible if and only if $c^\ast$ is a highest weight vertex in the $A_1$-crystal.
\end{Prop}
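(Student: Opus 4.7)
The plan is to unpack the definition of the duality $\ast$ in the case $m=1$, translate the $A_1$ highest-weight condition into an inequality on partial sums of the reading word of $c^\ast$, and observe that this inequality matches the $n$-admissibility condition on $c$ term by term.

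First, I would analyse $c^\ast = d_1 \otimes \cdots \otimes d_n$ column by column. Since $m=1$, the alphabet is $\sC_1^\ast=\{1<\overline{1}\}$, so each $d_i$ is a subset of $\{1,\overline{1}\}$. By the definition of $\ast$, the letter $1$ lies in $d_i$ iff $i \in \tc_1$ iff $\overline{i}\notin c$, and the letter $\overline{1}$ lies in $d_i$ iff $i\in \tc_{\overline{1}}$ iff $i\in c$. This yields four cases, for which the top-to-bottom reading of $d_i$ contributes, respectively, $+$, $+-$, the empty word, and $-$ (recall that for $\dot e_1,\dot f_1$ we encode $1$ by $+$ and $\overline{1}$ by $-$).

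Second, by the reading convention (right to left among columns, then top to bottom inside each column), the word $\w$ attached to $c^\ast$ is the concatenation of the contributions of $d_n, d_{n-1},\ldots, d_1$ in that order. The vertex $c^\ast$ is a highest weight vertex for the $A_1$-crystal iff the bracketing procedure leaves no unbracketed $-$, i.e.\ every prefix of $\w$ contains at least as many $+$'s as $-$'s. Since in the only case where a column contributes both signs (Case B) the $+$ precedes the $-$, the intermediate partial sums inside a column are controlled by the boundary values, and it suffices to check that
\[
S_i \;:=\; \sum_{j=i}^{n}\bigl(p_j - m_j\bigr)\;\ge\;0 \qquad \text{for all } i=1,\ldots, n,
\]
where $p_j,m_j\in\{0,1\}$ count respectively the occurrences of $1$ and $\overline{1}$ in $d_j$.

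Finally, the case analysis gives $p_j-m_j=1$ when $\overline{j},j\notin c$, $p_j-m_j=-1$ when $\overline{j},j\in c$, and $p_j-m_j=0$ in the two remaining cases. A direct count then shows
\[
S_i \;=\; \#\{j\in\{i,\ldots,n\} : \overline{j}\notin c\} - \#\{j\in\{i,\ldots,n\} : j\in c\} \;=\; (n-i+1) - N_i(c),
\]
so that $S_i\ge 0$ is equivalent to $N_i(c)\le n-i+1$. Ranging over $i$ yields exactly \Cref{Def_admissible}, proving the equivalence. The argument is essentially bookkeeping; the one mildly subtle point to articulate clearly is why the partial sums strictly inside a column need not be checked separately, which follows from the fact that in Case B the $+$ is read before the $-$.
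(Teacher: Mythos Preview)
Your proof is correct and follows essentially the same approach as the paper: both translate the $A_1$ highest-weight condition into the requirement that each tail $d_i\otimes\cdots\otimes d_n$ contain at least as many $1$'s as $\overline{1}$'s, then compute this difference via the duality and identify it with $(n-i+1)-N_i(c)$. Your version is simply more explicit, spelling out the four cases for $d_i$ and the reason why within-column prefixes need no separate check, whereas the paper condenses this into a single displayed formula for $N_i(c)$ in terms of the entries of $c_i^\ast\otimes\cdots\otimes c_n^\ast$.
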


\begin{proof}
By definition of the $A_1$-crystal structure on $\dcF_{m,n}$,
the element $c^\ast=c^\ast_1\otimes \cdots \otimes c^\ast_n$ is a highest weight vertex
if and only if for all $i=1,\ldots, n$, $c^\ast_i\otimes \cdots \otimes c^\ast_n$
has at least as many entries $1$ as $\overline{1}$.
Now, recall that $c$ is $n$-admissible if and only if $N_i(c)\leq n-i+1$ for all $i=1,\ldots, n$ (\Cref{Def_admissible}).
We can compute $N_i(c)$ on $c^\ast$ by using the definition of the duality: we get
$$N_i(c)= 
\left( \# \text{ entries $\overline{1}$ in $c_i^\ast\otimes\cdots\otimes c^\ast_n$ } \right)
+(n-i+1) - 
\left( \# \text{ entries $1$ in $c_i^\ast\otimes\cdots\otimes c^\ast_n$ } \right).$$
Since $N_i(c)\leq n-i+1$ we get the desired characterisation.
\end{proof}

\begin{Exa}
Let $n=6$ and 
$c=\scriptsize\gyoung(<\overline{3}>,<\overline{1}>,1,3,4,6)$, which is $n$-admissible.
Now, we can compute
$c^\ast = 
\scriptsize\gyoung(<\overline{1}>)
\otimes
\scriptsize\gyoung(1)
\otimes
\scriptsize\gyoung(<\overline{1}>)
\otimes
\scriptsize\gyoung(1,<\overline{1}>)
\otimes
\scriptsize\gyoung(1)
\otimes
\scriptsize\gyoung(1,<\overline{1}>)
$,
and we see that $c^\ast$ verifies the expected property.
\end{Exa}

\begin{Rem}
Similarly, there is a notion of coadmissibility for columns,
which is easily characterised on the dual.
More precisely, given a column $c$ of type $C_n$, set, for all $i=1,\ldots, n$,
$$ M_i(c)=\left| \left\{ x\in c \mid \overline{i}\leq  x \leq i  \right\}\right|.$$
Then $c$ is called \textit{$n$-coadmissible} if 
$M_i(c)\leq i$ for all $i=1,\ldots, n$.
As in \Cref{admissible_dual}, we can prove that
$c$ is $n$-coadmissible 
if and only if 
$c^\ast$ is a highest weight vertex with respect to the alternative $A_1$-crystal structure
where we choose to read the $n$ factors from left to right.
\end{Rem}

In general, we will now show some interesting
relationships between 
the $C_n$-crystal structure on $\cF_{n,m}$ and the $A_{2m-1}$-crystal 
structure on $\dcF_{m,n}$.

\medskip

For the next result, recall the type $C_n$ plactic relation called \textit{contraction} of a column
defined in \cite[Remark $\text{(ii)}$ p. 213]{Le05},
which is given by removing a certain pair $(\overline{k},k)$ appearing in the column.
It is easy to see that contraction has an inverse, which we call \textit{dilatation}.
Moreover, for $1\leq j\leq m$ define $\kappa_j: \cF_{n,m}\rightarrow \cF_{n,m}$
by setting $a= \kappa_j b$ if and only if $a^\ast = \de_j b^\ast$
(i.e. there is an arrow  $a^\ast \overset{\makebox[0pt]{\mbox{\tiny $j$}}}{\longrightarrow} b^\ast$).

\begin{Th}\label{contraction}
For all $1\leq j\leq m$, $\kappa_j$ is the contraction of the $j$-th column.
\end{Th}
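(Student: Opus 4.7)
The plan is to push the crystal action of $\de_j$ on $b^\ast$ back through the duality $\ast$ and identify the resulting operation on $b$ as a type-$C_n$ contraction of the $j$-th column.

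First, I will record the following dictionary, immediate from the construction of $b^\ast$ in \Cref{Sec_Howe_by_crystals}: for each $i\in\{1,\ldots,n\}$ and each $j\in\{1,\ldots,m\}$, the letter $\overline{j}$ lies in $d_i$ iff $i\in c_j$, while the letter $j$ lies in $d_i$ iff $\overline{i}\notin c_j$. Each index $i$ therefore falls into one of four cases: both $j$ and $\overline{j}$ are present in $d_i$ when $i\in c_j$ and $\overline{i}\notin c_j$; only $\overline{j}$ is present when both $i$ and $\overline{i}$ lie in $c_j$; only $j$ is present when neither lies in $c_j$; and neither is present when $\overline{i}\in c_j$ but $i\notin c_j$.

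Second, I will compute the $(+,-)$-signature of $b^\ast$ entering the definition of $\de_j$. Using the reading convention for the $A_{2m-1}$-structure (right-to-left across columns, top-to-bottom within each column), whenever $d_i$ contains both $j$ and $\overline{j}$ they appear consecutively, since $j$ is placed just above $\overline{j}$ in $\sC^\ast_m$; such pairs therefore contribute a $+-$ that brackets immediately. After these cancellations, the surviving signature depends only on $c_j$: reading $i=n,n-1,\ldots,1$, one records a $+$ exactly when neither $i$ nor $\overline{i}$ belongs to $c_j$, a $-$ exactly when both do, and nothing in the remaining case. In particular, $\de_j b^\ast$ modifies only the $j$-th column of $b$.

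Third, $\de_j$ turns the rightmost unbracketed $-$ of this signature into a $+$; in the reading order above, \textit{rightmost} means the smallest value of $i$ among the unbracketed $-$ positions. This singles out an index $i_0$ with $i_0,\overline{i_0}\in c_j$ and replaces the $\overline{j}$ of $d_{i_0}$ by $j$. Reversing the dictionary, the new column $c_j'$ is obtained from $c_j$ by deleting the pair $\{i_0,\overline{i_0}\}$ and nothing else. Thus $\kappa_j b$ simply removes a single pair $(\overline{k},k)$ from the $j$-th column, which is the shape of a type-$C_n$ contraction step.

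The main obstacle is to verify that the index $i_0$ selected by this crystal rule coincides with the one chosen by the contraction of \cite[Remark (ii) p.\,213]{Le05}. To handle this, I will translate partial sums of the signature into admissibility defects: writing $A_{i'},B_{i'}\in\{0,1\}$ for the indicators of the two relevant cases (\textit{both in $c_j$} and \textit{neither in $c_j$}), a direct count yields
\[
N_i(c_j)-(n-i+1)=\sum_{i'\geq i}(A_{i'}-B_{i'}).
\]
Hence the defect of admissibility at $i$ equals the signed count of $\pm$-symbols at positions $\geq i$, and the rightmost unbracketed $-$ sits at the smallest $i_0$ at which this defect attains its maximum. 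This is precisely the pair deleted by the contraction. A byproduct of this identification is that $\kappa_j b=0$ exactly when $c_j$ is $n$-admissible, in agreement with \Cref{admissible_dual}.
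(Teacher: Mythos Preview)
Your approach is genuinely different from the paper's. The paper proves that $\kappa_j$ commutes with all the $C_n$-crystal operators $e_i$ (by a short case analysis on the $j$-th column) and then invokes the fact that contraction is the \emph{unique} $C_n$-crystal isomorphism that removes a pair $(\overline{k},k)$ from a column; no explicit identification of $k$ is needed. You instead compute directly which pair $\kappa_j$ removes and try to match it to the explicit rule in \cite{Le05}. Your first three paragraphs are correct and this direct route is in principle viable.

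There is, however, a genuine error in your fourth paragraph. You claim that the rightmost unbracketed $-$ sits at the \emph{smallest} $i_0$ for which the defect $D_i:=N_i(c_j)-(n-i+1)$ attains its maximum. Since the reading order is $i=n,n-1,\ldots,1$, the partial sums $D_i$ accumulate as $i$ decreases, and standard bracketing shows that the rightmost unbracketed $-$ occurs at the \emph{largest} $i$ with $D_i=\max_j D_j$ (the first time the running maximum is reached). For a concrete counterexample take $n=4$ and $c_j=\{\overline{4},\overline{3},\overline{1},1,3,4\}$: the reduced signature in reading order is $--+-$, the rightmost unbracketed $-$ is at $i=3$, yet $(D_4,D_3,D_2,D_1)=(1,2,1,2)$ attains its maximum at both $i=3$ and $i=1$, so your rule would select $i_0=1$. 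Removing $(\overline{1},1)$ yields $\{\overline{4},\overline{3},3,4\}$, a highest weight vertex of weight $0$, hence in $B(0)$; removing $(\overline{3},3)$ yields $\{\overline{4},\overline{1},1,4\}$, which lies in $B(\omega_2)$, the same component as $c_j$. Only the latter can be a crystal isomorphism, so only $i_0=3$ is the correct contraction.

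Even after correcting the characterisation to ``largest $i_0$'', you still owe an explicit comparison with the rule in \cite[Remark~(ii) p.\,213]{Le05}; the sentence ``This is precisely the pair deleted by the contraction'' is asserted without argument. The paper's route via uniqueness of the pair-removing crystal isomorphism sidesteps this bookkeeping entirely.
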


\begin{proof}
Let $b=c_1\otimes\cdots\otimes c_m\in B_{\mu'}$ for some $\mu'\in\cC_{m,2n}$.
If $a^\ast = \de_j b^\ast$,
then $a^\ast$ is obtained from $b^\ast$ by changing a $\overline{j}$ into a $j$, say in column $k$.
That is, $a$ is obtained from $b$ by removing the pair $(\overline{k},k)$ in column $j$.
In fact, by construction, the contraction of the $j$-th column is the only $C_n$-crystal isomorphism
which removes such a pair.
Therefore, it suffices to prove that $\kappa_j$ is a $C_n$-crystal isomorphism
to deduce that $\kappa_j$ is the contraction of the $j$-th column.
So we will prove that
\begin{equation}\label{commutation}
\kappa_j e_i b = e_i \kappa_j b \quad \text{ for all } 0\leq i\leq n-1. 
\end{equation}
First of all, it is clear that (\ref{commutation}) holds in the following two cases:
\begin{itemize}
 \item  $e_i$ does not act on the $j$-th column of $b$, 
 \item  $i\notin \{k, k-1\}$. 
\end{itemize}
Let us look at the remaining cases, that is, $e_i$ acts on the $j$-th column of $b$ and
\begin{enumerate}
\item $i=k-1$. In this case, since we already know that $\overline{k}, k\in c_j$,
we are ensured that $e_i$ acts non trivially on $b$ if and only if $\overline{k-1}\in c_j$ and $k-1\notin c_j$.
This means that we have the configuration
$$c_j=\scriptsize\gyoung(|\sesqui\vdts,<\overline{k}>,<\overline{k\text{-}1}>,|\sesqui\vdts,<k>,|\sesqui\vdts).$$
One checks that applying $\kappa_j e_i$ amounts to
\begin{enumerate}
 \item changing $k$ into $k-1$, followed by
 \item removing the pair $(\overline{k-1},k-1)$
\end{enumerate}
On the other hand, applying $e_i \kappa_j$ amounts to
\begin{enumerate}
 \item removing the pair $(\overline{k},k)$, followed by
 \item changing $\overline{k-1}$ into $\overline{k}$
\end{enumerate}
Both of these procedures yield the same result, namely, we get the $j$-th column
$$c_j=\scriptsize\gyoung(|\sesqui\vdts,<\overline{k}>,|\sesqui\vdts)$$
(where $\overline{k-1}$ and $k$ have been deleted), and we have (\ref{commutation}) as expected.
\item $i=k$. Similarly, $e_i$ acts non trivially on $b$ if and only if $\overline{k+1}\notin c_j$ and $k+1\in c_j$,
and this means that we have the configuration
$$c_j=\scriptsize\gyoung(|\sesqui\vdts,<\overline{k}>,|\sesqui\vdts,<k>,<k\text{\tiny +}1>,|\sesqui\vdts).$$
In this case, one checks that (\ref{commutation}) holds and that the resulting column is
$$c_j=\scriptsize\gyoung(|\sesqui\vdts,<k>,|\sesqui\vdts)\ .$$
\end{enumerate}
\end{proof}

\begin{Exa}
We illustrate Case (1) of the previous proof by taking
$n=4$, $m=1$ and 
$b=c_1=\scriptsize\gyoung(<\overline{4}>,<\overline{3}>,<\overline{2}>,3)$.
We have $j=1$, we check that $k=3$ and
\begin{center}
\begin{tikzpicture}
\node (11) at (0,0) {$\scriptsize\gyoung(<\overline{4}>,<\overline{3}>,<\overline{2}>,3)$};
\node (12) at (3,0) {$\scriptsize\gyoung(<\overline{4}>,<\overline{3}>,<\overline{2}>,2)$};
\node (21) at (0,-3) {$\scriptsize\gyoung(<\overline{4}>,<\overline{2}>)$};
\node (22) at (3,-3) {$\scriptsize\gyoung(<\overline{4}>,<\overline{3}>)$};

\draw[|->] (11) --  node[pos=0.5,above]{$e_2$} (12);
\draw[|->] (11) --  node[pos=0.5,left]{$\kappa_1$} (21);
\draw[|->] (21) --  node[pos=0.5,above]{$e_2$} (22);
\draw[|->] (12) --  node[pos=0.5,left]{$\kappa_1$} (22);
\end{tikzpicture}
\end{center}
\end{Exa}

Therefore, we shall 
consider the $A_1\times\cdots\times A_1$-crystal structure ($m$ factors) on $\dcF_{m,n}$
induced by keeping only arrows of the form 
${\scriptsize\gyoung(<j>)} \overset{\makebox[0pt]{\mbox{\tiny $j$}}}{\longrightarrow} \scriptsize\gyoung(<\overline{j}>)$.
\Cref{contraction} directly implies the following corollary,
which can be rephrased by saying that the combinatorial Fock space is endowed with a $(C_m \times A_1^m)$-bicrystal structure.

\begin{Cor}\label{cor_bicrystal}
The  duality $\ast$ intertwines the $C_n$-crystal on $\cF_{n,m}$ and the $A_1\times\cdots\times A_1$-crystal on $\dcF_{m,n}$.
\end{Cor}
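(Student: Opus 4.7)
The corollary is a direct reformulation of Theorem~\ref{contraction} in bicrystal language, so the argument will be very short. First, I would observe that the $A_1^m$-crystal structure on $\dcF_{m,n}$ is, by construction, generated by the Kashiwara operators $\de_j, \df_j$ for $1 \leq j \leq m$, namely those attached to the arrows from $j$ to $\overline{j}$ in the vector crystal (arrows with barred labels being discarded). By the very definition of $\kappa_j$ given immediately before Theorem~\ref{contraction}, the operator $\de_j$ pulls back through the bijection $\ast$ to the operator $\kappa_j$ on $\cF_{n,m}$, and $\df_j$ pulls back to $\kappa_j^{-1}$, i.e.\ to the dilatation of the $j$-th column (which is defined whenever the result still lies in $\cF_{n,m}$).

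Second, I would invoke Theorem~\ref{contraction}, which identifies $\kappa_j$ with the contraction of the $j$-th column and whose proof establishes the commutation $\kappa_j e_i = e_i \kappa_j$ for every $0 \leq i \leq n-1$. Since $e_i$ and $f_i$ are inverse partial maps on the $C_n$-crystal, the analogous commutation with $f_i$ follows immediately; hence each $\kappa_j$ is a $C_n$-crystal automorphism on its domain of definition. Moreover, contractions of distinct columns act on disjoint factors of the tensor product and therefore commute trivially with one another, so the family $(\kappa_j)_{1 \leq j \leq m}$ consists of mutually commuting $C_n$-crystal morphisms of $\cF_{n,m}$. Transporting back via $\ast$, this is precisely the statement that the $A_1^m$-crystal operators on $\dcF_{m,n}$ commute pairwise and commute with the $C_n$-crystal operators on $\cF_{n,m}$, equipping the combinatorial Fock space with the desired $(C_n \times A_1^m)$-bicrystal structure.

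I do not foresee any genuine obstacle, since all the substance has already been captured in Theorem~\ref{contraction}; the corollary merely repackages it. The only small verification worth flagging is the compatibility of $\ast$ with weights, which ensures that the action pulled back from $\dcF_{m,n}$ is genuinely a crystal action on $\cF_{n,m}$; this is already handled by the weight computation carried out in the proof of Theorem~\ref{comb_duality_C}.
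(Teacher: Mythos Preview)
Your proposal is correct and follows exactly the paper's approach: the corollary is stated as a direct consequence of Theorem~\ref{contraction}, and your write-up simply unpacks why the commutation $\kappa_j e_i = e_i \kappa_j$ established there (together with the trivial commutation of contractions on distinct columns) yields the claimed bicrystal structure.
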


We complete \Cref{contraction} by giving
a characterisation of the dual $\kappa_{\overline{j}}$ of the Kashiwara operators
$e_{\overline{j}}$ (corresponding to the arrows
${\scriptsize\gyoung(<\overline{j}>)} \overset{\makebox[0pt]{\mbox{\tiny $\overline{j}$}}}{\longrightarrow} \scriptsize\gyoung(<j\text{+}1>)$) 
in terms of jeu de taquin operators on $\cF_{n,m}$.
More precisely, if $b=c_1\otimes  \cdots \otimes c_m\in B_{\mu'}$,
set for all $1\leq j\leq m$,
$$
\begin{array}{l}
\overline{c}_j = \{ \overline{n}\leq \overline{x} \leq \overline{1} \mid \overline{x}\in c_j\}
=\{\overline{n},\ldots, \overline{1}\}\cap c_j
\qu{and}
\\
\overline{c}_{\overline{j}} = \{ \overline{n}\leq \overline{x} \leq \overline{1} \mid x \notin c_j\},
\end{array}
$$
so that each $\overline{c}_j$ is the complement of the column $\tc_j$ defined in \Cref{Sec_Howe_by_crystals}.
Also, we set
$$
\overline{b}=\overline{c}_{1}\otimes \overline{c}_{\overline{1}}\otimes\cdots\otimes \overline{c}_{m}\otimes\overline{c}_{\overline{m}}.
$$
On elements of the form $\overline{b}$,
consider for $1\leq j\leq m-1$ the jeu de taquin operator $\mJ_{\overline{j}}$ acting on columns
$\overline{j}$ and $j+1$ as illustrated in \Cref{Exa_jdt} below.

\begin{Exa}\label{Exa_jdt}
Let $n=5, m=2$ and 
${b}=
{
\scriptsize
\gyoung(<\overline{3}>,1,5)
\otimes
\gyoung(<\overline{5}>,<\overline{1}>,2,4,5)
}
$
so that
$\overline{b}
=\overline{c}_{1}\otimes \overline{c}_{\overline{1}}\otimes\overline{c}_{2}\otimes \overline{c}_{\overline{2}}
=
{
\scriptsize
\gyoung(<\overline{3}>)
\otimes
\gyoung(<\overline{4}>,<\overline{3}>,<\overline{2}>)
\otimes
\gyoung(<\overline{5}>,<\overline{1}>)
\otimes
\gyoung(<\overline{3}>,<\overline{1}>)
}.
$
Let us perform one jeu de taquin operation between 
$\overline{c}_{\overline{1}}= \scriptsize \gyoung(<\overline{4}>,<\overline{3}>,<\overline{2}>)$ 
and $\overline{c}_{2}=\scriptsize
\gyoung(<\overline{5}>,<\overline{1}>)$.
We want to slide a box from $\overline{c}_{\overline{1}}$ 
to $\overline{c}_2$, so we reverse the order of the two columns, which yields
the following jeu de taquin operation
\Yvcentermath1
$$
{\scriptsize\gyoung(:~<\overline{4}>,:\bullet<\overline{3}>,<\overline{5}><\overline{2}>,<\overline{1}>)}
\to 
{\scriptsize\gyoung(:~<\overline{4}>,<\overline{5}><\overline{3}>,:\bullet<\overline{2}>,<\overline{1}>)}
\to 
{\scriptsize\gyoung(:~<\overline{4}>,<\overline{5}><\overline{3}>,<\overline{2}>:\bullet,<\overline{1}>)}.
$$
Finally, we get
$$\mJ_{\overline{j}} (\overline{b})=
{
\scriptsize
\gyoung(<\overline{3}>)
\otimes
\gyoung(<\overline{4}>,<\overline{3}>)
\otimes
\gyoung(<\overline{5}>,<\overline{2}>,<\overline{1}>)
\otimes
\gyoung(<\overline{3}>,<\overline{1}>)
}.
$$
\end{Exa}

We extend the definition of $\mJ_{\overline{j}}$ to $\cF_{n,m}$ by setting
$\overline{\mJ_{\overline{j}}(b)}=\mJ_{\overline{j}}(\overline{b})$.
We are ready to state the desired result, which is a symplectic analogue of \cite[Theorem 2.9]{GL20}.

\begin{Th}\label{jdt}
For all $1\leq j\leq m-1$, we have 
$\kappa_{\overline{j}}=\mJ_{\overline{j}}$.
\end{Th}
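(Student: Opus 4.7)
The plan is to analyse the effect of the dual operator $\kappa_{\overline{j}}$ on the element $b$, rewrite this modification in terms of $\overline{b}$, and recognise it as a jeu de taquin slide. To that end, first unpack the definition of $\kappa_{\overline{j}}$. By construction, $\kappa_{\overline{j}} b = a$ where $a^\ast = \de_{\overline{j}} b^\ast$. The crystal operator $\de_{\overline{j}}$ acts on $b^\ast$ by bracketing the letters $\overline{j}$ (as $+$) and $j+1$ (as $-$) appearing in the reading word (right-to-left, top-to-bottom), and converting the rightmost unbracketed $-$ into a $+$. Hence exactly one letter $j+1$ in a single column $d_i$ of $b^\ast$ is changed into $\overline{j}$. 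Unwinding the definitions $d_i = \{x\in\sC_m^\ast \mid i \in \tc_x\}$, $\tc_{j+1}=\{y \mid \overline{y}\notin c_{j+1}\}$ and $\tc_{\overline{j}}=\{y\mid y\in c_j\}$, this is equivalent to saying that in $a = \kappa_{\overline{j}} b$ the column $c_j$ gains the unbarred entry $i$ and the column $c_{j+1}$ gains the barred entry $\overline{i}$, all the other columns being left unchanged.

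Next, translate this to $\overline{b}$. Adding an unbarred $i$ to $c_j$ does not affect $\overline{c}_j$, but it removes $\overline{i}$ from $\overline{c}_{\overline{j}}=\{\overline{x}\mid x\notin c_j\}$. Symmetrically, adding $\overline{i}$ to $c_{j+1}$ inserts $\overline{i}$ into $\overline{c}_{j+1}$ without affecting $\overline{c}_{\overline{j+1}}$. Consequently, $\overline{\kappa_{\overline{j}} b}$ differs from $\overline{b}$ precisely by moving the letter $\overline{i}$ from the column $\overline{c}_{\overline{j}}$ (in position $2j$) into the immediately adjacent column $\overline{c}_{j+1}$ (in position $2j+1$). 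This is exactly the shape of a single jeu de taquin slide as depicted in \Cref{Exa_jdt}, so $\kappa_{\overline{j}}$ coincides with $\mJ_{\overline{j}}$ as soon as the distinguished index $i$ agrees in both pictures.

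The heart of the proof is then to show that the $\overline{i}$ selected by the bracketing rule of $\de_{\overline{j}}$ on $b^\ast$ is the same as the one selected by the jeu de taquin algorithm on $\overline{b}$. The plan is to isolate the two ``blocks'' of $b^\ast$ whose entries lie in $\{\overline{j},j+1\}$, observe that via the complementation $\{\overline{n},\ldots,\overline{1}\} \setminus \overline{c}_\star \leftrightarrow \tc_\star$ these letters correspond bijectively to the entries of $\overline{c}_{\overline{j}}$ and $\overline{c}_{j+1}$, and conclude by invoking the type $A$ analogue \cite[Theorem 2.9]{GL20}, which asserts the desired identification between a bracketing procedure and a single slide.

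The main obstacle is precisely this last bookkeeping step: the duality $\ast$ is built from the $\tc_j$'s (the complements of the barred parts of the $c_j$'s), whereas the operator $\mJ_{\overline{j}}$ is defined on $\overline{b}$ (the barred parts themselves). Tracking that the complementation on $\{\overline{1},\ldots,\overline{n}\}$ sends the rightmost unbracketed $-$ in the reading of $b^\ast$ to the unique box which slides in the skew two-column shape $\overline{c}_{\overline{j}}\sqcup\overline{c}_{j+1}$ requires a careful but essentially routine translation between the two conventions, analogous to the one carried out in the proof of \Cref{contraction}.
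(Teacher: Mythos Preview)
Your approach differs from the paper's. Both you and the paper begin the same way: unwind the definitions to see that $\kappa_{\overline{j}}$ acts on $\overline{b}$ by moving a single letter $\overline{i}$ from $\overline{c}_{\overline{j}}$ to $\overline{c}_{j+1}$. From there, however, the arguments diverge. The paper does \emph{not} try to identify the specific index $i$ directly. Instead, it invokes a uniqueness principle: $\mJ_{\overline{j}}$ is the \emph{only} map which moves one entry between these two columns and which commutes with the $A_{n-1}$-crystal operators $e_1,\ldots,e_{n-1}$. It then checks, by a short case analysis parallel to the proof of \Cref{contraction}, that $\kappa_{\overline{j}}$ also commutes with $e_1,\ldots,e_{n-1}$, and concludes. (In particular, your appeal to ``the proof of \Cref{contraction}'' as a model for a direct index-matching is misplaced: that proof uses exactly the same uniqueness-plus-commutation scheme, not a direct identification of $k$.)

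Your route --- reduce to \cite[Theorem 2.9]{GL20} on the pair $(\tc_{\overline{j}},\tc_{j+1})$ and then pass to the complements $(\overline{c}_{\overline{j}},\overline{c}_{j+1})$ --- is a reasonable strategy, but the step you label ``essentially routine'' is where the real content lies, and you have not carried it out. Concretely, \cite[Theorem 2.9]{GL20} tells you that $\de_{\overline{j}}$ on $b^\ast$ is jeu de taquin on $(\tc_{\overline{j}},\tc_{j+1})$; what you need is jeu de taquin on the \emph{complement} pair $(\overline{c}_{\overline{j}},\overline{c}_{j+1})$, sliding in the opposite direction. Showing that these two slides select the same index $i$ amounts to proving that a single jeu de taquin slide on a two-column skew shape is compatible with box-complementation. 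This is true, but it is a genuine combinatorial fact (essentially a two-column instance of the compatibility of evacuation/complementation with jeu de taquin), not a bookkeeping translation. Without either proving this or citing it precisely, your argument has a gap at its key step. The paper's uniqueness argument sidesteps this issue entirely, at the cost of the small case analysis on commutation with the $e_i$.
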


\begin{proof}
We use a similar argument to the proof of \Cref{contraction}.
Namely, we first notice that
if $a^\ast =  e_{\overline{j}} b^\ast$, then~$a^\ast$ is obtained from $b^\ast$ by changing a $j+1$ into a $\overline{j}$, say in column $k$.
By definition of $\ast$, this means that.
$\overline{a}$ is obtained from $\overline{b}$ by
moving an entry $\overline{k}$ from column $\overline{j}$ to column $j+1$.
Then we use the fact that $\mJ_{\overline{j}}$ is 
the only map that verifies this property 
and that commutes with the $C_n$-crystal operators $e_1,\ldots, e_{n-1}$,
and show by case analysis that this also holds for $\kappa_{\overline{j}}$.
\end{proof}

\begin{Rem}\label{Rem_bicrystal}
Unlike the operators $\kappa_j$, we do not have that $\kappa_{\overline{j}}$ commute with the type $C_n$
Kashiwara operator~$e_0$.
For instance, if $n=1, m=2$, and $b=\scriptsize\emptyset\otimes\gyoung(1)$,
one checks that $\kappa_{\overline{1}} e_0 b = 0$ but 
$e_0 \kappa_{\overline{1}} b = \scriptsize\gyoung(<\overline{1}>)\otimes\gyoung(<\overline{1}>,1)$.
Therefore, we do not get an analogue of \Cref{cor_bicrystal}.
\end{Rem}

\begin{Exa}
Let $n=5, m=2, j=1$ and
$$
b^\ast = 
{
\scriptsize
\gyoung(<1>,<\overline{1}>,<\overline{2}>)
\otimes
\gyoung(<2>)
\otimes
\gyoung(1,2)
\otimes
\gyoung(<\overline{1}>,2)
\otimes
\gyoung(1,<\overline{1}>)
}
\quad \text{ so that } \quad 
b= 
{
\scriptsize
\gyoung(<\overline{4}>,<\overline{2}>,1,4,5)
\otimes
\gyoung(<\overline{5}>,<\overline{1}>,1)
}.
$$
We get
$$
e_{\overline{1}} b^\ast =
{
\scriptsize
\gyoung(<1>,<\overline{1}>,<\overline{2}>)
\otimes
\gyoung(<\overline{1}>)
\otimes
\gyoung(1,2)
\otimes
\gyoung(<\overline{1}>,2)
\otimes
\gyoung(1,<\overline{1}>)
},
\quad
\text{ so $k=2$ and }
\kappa_{\overline{1}} b =
{
\scriptsize
\gyoung(<\overline{4}>,<\overline{2}>,1,2,4,5)
\otimes
\gyoung(<\overline{5}>,<\overline{2}>,<\overline{1}>,1)
}.
$$
On the other hand, we have
$$\overline{b}=
{
\scriptsize
\gyoung(<\overline{4}>,<\overline{2}>)
\otimes
\gyoung(<\overline{3}>,<\overline{2}>)
\otimes
\gyoung(<\overline{5}>,<\overline{1}>)
\otimes
\gyoung(<\overline{5}>,<\overline{4}>,<\overline{3}>,<\overline{2}>)
}.$$
The jeu de taquin operation corresponding to $\mJ_{\overline{1}}$ is
$$
{\scriptsize\gyoung(:\bullet<\overline{3}>,<\overline{5}><\overline{2}>,<\overline{1}>)}
\to 
{\scriptsize\gyoung(<\overline{5}><\overline{3}>,:\bullet<\overline{2}>,<\overline{1}>)}
\to 
{\scriptsize\gyoung(<\overline{5}><\overline{3}>,<\overline{2}>:\bullet,<\overline{1}>)},
$$
and we check that this yields $\mJ_{\overline{1}} (b) = \kappa_{\overline{1}} (b).$
\end{Exa}

At this point, we make a small digression and explain briefly
the relationship with the charge statistic 
defined in \cite[Theorem 6.13]{LL18} to compute $q$-weight zero multiplicities.

\medskip

Let $b\in B_{\mu'}^\hw$ such that $b^\ast$ is a King tableau (\Cref{comb_duality_C}) 
of weight $\widehat{\mu}=0$. In particular, we have $\mu'=(m^n)$, that is all the columns in $\mu$ have height $n$.
Denote $b^\ast_{\mathrm{low}}$ 
the lowest weight vertex corresponding to $b^\ast$
in the $A_1\times \cdots \times A_1$-crystal. In other terms,
$b^\ast_{\mathrm{low}}$ is obtained from $b^\ast$ by applying all possible
$\df_j$ to $b^\ast$ recursively (they commute).

\newcommand{\ch}{\mathrm{ch}}

\begin{Def}
The \textit{charge} of $b^\ast$ is the nonnegative integer
$$\ch(b^\ast) =
\sum_{j=1}^m (2(m-j)+1)\frac{\eps_j(b^\ast_{\mathrm{low}})}{2}
+
\sum_{j=1}^{m-1} 2(m-j)\left\lceil\frac{\eps_{\overline{j}}(b^\ast_{\mathrm{low}})}{2}\right\rceil
$$
\end{Def}

We can in fact construct a statistic $D$ directly on $B_{\mu'}$
such that $$D(b)=\ch(b^\ast).$$

\medskip

To do this, starting from $b=c_1\otimes\cdots\otimes c_m\in B_{\mu'}$.
let $b_\mathrm{dil}=d_1\otimes \cdots\otimes d_m$
where, for all $j=1,\ldots, m$, $d_j$ is the type $C_n$ 
column obtained by dilating $c_j$ recursively as much as possible.
Then $d_j$ can be contracted a certain number of times, say $\delta_j$, until it becomes admissible.
At this point, we have by \Cref{contraction}
$$\left(b_\mathrm{dil}\right)^\ast = b^\ast_\mathrm{low},$$
and 
$$\eps_j(b^\ast_\mathrm{low})=\delta_j.$$
In fact, we can easily compute $\delta_j$. 
Let $h$ denote the height of the admissible column corresponding to $c_j$ (i.e. obtained from $c_j$ by
applying recursively as many contractions as possible).

\begin{Prop}
We have then $\delta_j=n-h$. Moreover, $\delta_j$ is even.
\end{Prop}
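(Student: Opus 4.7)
The plan is to read off both $h$ and $\delta_j$ from the dual $b^\ast$, using the correspondence $\kappa_j\leftrightarrow\de_j$ proved in \Cref{contraction} combined with elementary features of the $A_1^{(j)}$-subcrystal structure on $\dcF_{m,n}$.

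First, I would invoke \Cref{contraction} to identify the iterated contraction of the $j$-th column of $b$ with the iterated action of $\de_j$ on $b^\ast$. Applying $\kappa_j$ to $c_j$ as many times as possible thus amounts to applying $\de_j$ to $b^\ast$ until one reaches the $A_1^{(j)}$-highest weight vertex, so the number of iterations is precisely $\eps_j(b^\ast)$. Each contraction removes one pair $(\overline{k},k)$ and hence lowers the height of $c_j$ by $2$. Since the hypothesis $\wt(b^\ast)=\hmu=0$ forces $\mu=(m^n)$, that is $|c_j|=\mu'_j=n$, I would obtain
\[
h \;=\; n - 2\,\eps_j(b^\ast), \qquad \text{equivalently}\qquad \eps_j(b^\ast) \;=\; \tfrac{n-h}{2}.
\]

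Next, I would exploit the weight-zero condition $\wt(b^\ast)=0$, which means $\#j = \#\overline{j}$ in $b^\ast$ for every $j\in\{1,\ldots,m\}$. In the $A_1^{(j)}$-subcrystal of $\dcF_{m,n}$, the standard tensor-product crystal identity $\varphi_j-\eps_j = \#j-\#\overline{j}$ then gives $\varphi_j(b^\ast)=\eps_j(b^\ast)$. Because $b^\ast_{\mathrm{low}}$ is by construction a lowest weight vertex for \emph{every} $A_1^{(j)}$, the quantity $\eps_j(b^\ast_{\mathrm{low}})$ coincides with the length of the entire $A_1^{(j)}$-chain through $b^\ast$, which equals $\varphi_j(b^\ast)+\eps_j(b^\ast) = 2\,\eps_j(b^\ast)$. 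Combining both computations yields
\[
\delta_j \;=\; \eps_j(b^\ast_{\mathrm{low}}) \;=\; 2\,\eps_j(b^\ast) \;=\; n-h,
\]
and the factorisation $\delta_j = 2\,\eps_j(b^\ast)$ makes the parity claim immediate since $\eps_j(b^\ast)\in\mathbb{Z}_{\geq 0}$.

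I do not expect any real obstacle. The heart of the argument, namely that ``apply $\kappa_j$ as many times as possible to $c_j$'' translates faithfully into ``apply $\de_j$ as many times as possible to $b^\ast$'', is already packaged in \Cref{contraction}, and the two remaining inputs (the consequence of $\hmu = 0$ and the $A_1$-chain length formula) are elementary. The only detail requiring some care is that the admissible column reached by iterating $\kappa_j$ is an invariant of $c_j$; this is automatic here because the number of iterations $\eps_j(b^\ast)$ is itself intrinsic.
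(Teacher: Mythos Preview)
Your argument is correct, and it follows a genuinely different line from the paper's own proof. The paper stays on the $C_n$-side: it uses that contraction is a $C_n$-crystal isomorphism to replace $c_j^{\mathrm{adm}}$ by its highest weight vertex $\{\overline{n},\ldots,\overline{n-h+1}\}$, and then checks by hand that this explicit column can be dilated exactly $n-h$ times (to $\{\overline{n},\ldots,\overline{1},1,\ldots,n-h\}$); the parity claim is observed separately from $|c_j|=n$. You instead work entirely on the dual side: \Cref{contraction} gives $\eps_j(b^\ast)=(n-h)/2$, the weight-zero hypothesis forces $\varphi_j(b^\ast)=\eps_j(b^\ast)$, and the commutation of the $A_1^{(j)}$-strings lets you read off $\delta_j=\eps_j(b^\ast_{\mathrm{low}})=\varphi_j(b^\ast)+\eps_j(b^\ast)=2\eps_j(b^\ast)=n-h$, with the parity falling out for free from the factorisation $\delta_j=2\eps_j(b^\ast)$. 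Your route is arguably more in the spirit of the bicrystal picture and avoids the explicit highest weight computation; the paper's route is more hands-on with the column combinatorics but needs the separate parity remark. Both are short and self-contained.
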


\begin{proof}
Note that since $|c_j|=n$, the difference of the sizes $n-h$ is even (since each contraction deletes $2$ entries).
Therefore, it suffices to show that $\delta_j=n-h$.
Denote $c^\mathrm{adm}_j$ the admissible columns corresponding to $c_j$, so that $h=|c^\mathrm{adm}_j|$.
Since contraction is a $C_n$-crystal isomorphism,
we can compute $\delta_j$ by considering the highest weight column associated to $c^\mathrm{adm}_j$ in the $C_n$-crystal.
Obviously, this column also has height $h$ and is admissible (since $c^\mathrm{adm}_j$ is admissible), so it must be the column
$\{ \overline{n}, \overline{n-1},\ldots, \overline{n-h+1}\}$.
It is straightforward to see that it can be dilated at most $n-h$ times, resulting in the column
$\{ \overline{n}, \overline{n-1},\ldots, \overline{n-h+1}, \overline{n-h},\ldots, \overline{1},1,\ldots, n-h\}$
of height $2n-h$ (which is the highest weight column associated to $d_j$), that is,
$\delta_j=n-h$.
\end{proof}

It remains to express   $\eps_{\overline{j}} (b^\ast_\mathrm{low})$ directly on $b_\mathrm{dil}$.
By \Cref{jdt}, this equals the number $\gamma_j$ 
of successive possible application of $\mJ_{\overline{j}}$ to the pair 
$(\overline{d}_{\overline{j}},\overline{d}_{j+1})$.
In other terms, if the minimal skew tableau associated to $(\overline{d}_{\overline{j}},\overline{d}_{j+1})$ has skew shape $\nu/(1^\ell)$, we have $\gamma_j=\ell$.
To sum up, we set
$$D(b)=
\sum_{j=1}^m (2(m-j)+1)\frac{\delta_j}{2}
+
\sum_{j=1}^{m-1} 2(m-j)\left\lceil\frac{\gamma_j}{2}\right\rceil.
$$
This can be seen as an analogue of the energy statistic for type $C$,
naturally generalising \cite[Theorem 2.51]{GL20}.

\medskip

We end this section by three important remarks.

\begin{Rem}\label{Rem_bicrystal_2} \
\begin{enumerate}
\item  As mentioned in \Cref{Rem_bicrystal},
since the operators $\kappa_{\overline{j}}$ do not commute with $e_0$,
we cannot establish a $(C_n\times A_{2m-1})$-bicrystal structure on $ B_{\mu'}$.
In another direction, we expect to obtain a $(C_n\times C_m)$-bicrystal structure
by considering appropriate jeu de taquin operations and contraction on columns.
In this case, it would be interesting to compare the resulting bicrystal structure to that obtained by Lee in \cite{Lee19}.
\item Let us consider elements of $B_{\mu'}^\hw$ that
are products of admissible columns. Combining \Cref{comb_duality_C} and
\Cref{admissible_dual}, these are in duality with King tableaux of weight $\widehat{\mu}$
which are highest weight vertices in the $A_1^m$-crystal.
This can be seen as a combinatorial version the new duality which will be proved in \Cref{generalised_duality},
in the special case $r=m$, $\bm=(1,\ldots, 1)$, $\bX=(A,\ldots, A)$ and $\bmu=((1^{\mu'_1}),\ldots,(1^{\mu'_m}))$
(columns of height $\mu'_j$).
\item The energy function $D$ defined previously does not coincide with the intrinsic energy 
function on type~$C_n^{(1)}$ tensor products of column Kirillov-Reshetikhin crystals. 
This reflects the fact that the $q$-weight multiplicities do not coincide with the 
one-dimensional sums beyond type $A$. 
Nevertheless, this suggests that other interesting statistics could exist on these particular affine crystals. 
\end{enumerate}
\end{Rem}


\section{Branching coefficients and a new duality}

The aim of this section is to extend the results of Section \ref{Sec_HoweDuality} to the case where the
fundamental $\mathfrak{gl}_{2n}(\mathbb{C})$-modules appearing in the tensor
products  in Theorem \ref{Thm_Howe_duality} are replaced by tensor products of simple 
$\mathfrak{gl}_{2n}(\mathbb{C})$ or simple $\mathfrak{sp}_{2n}(\mathbb{C})$-modules. 
More precisely,  let $\bX$ be a sequence $(X_{1},\ldots,X_{r})$  of symbols in $\{A,C\}^{r}$ and let~$\bmu=(\mu^{(1)},\ldots,\mu^{(r)})$ be a sequence of partitions such that $\mu^{(j)}\in \cP_{n\times m_j}$.  Then one can associate to $(\bX,\bmu)$ the tensor product
$$V^{X_1}(\mu^{(1)})\otimes V^{X_2}(\mu^{(2)})\otimes \cdots \otimes V^{X_r}(\mu^{(r)})$$
where $V^{C}(\delta)$ ($\delta\in \cP_n$) denotes the irreducible $\mathfrak{sp}_{2n}(\C)$-module of highest weight $\delta$ and $V^{A}(\delta)$ denotes the restriction to $\mathfrak{sp}_{2n}(\C)$ of the irreducible
$\mathfrak{gl}_{2n}(\C)$-module of highest weight $\delta$. We show that the mutliplicity of the highest weight module of weight $\la$ in the tensor product above is a branching coefficient of the form $[V^{C_m}(\widehat{\lambda}):V^{\bX^\ast}_{\bm^\ast}(\widehat{\bmu})]$ where $m=\sum m_j$ and $V^{\bX^\ast}_{\bm^\ast}(\widehat{\bmu})$ is an irreducible highest weight module for a block diagonal subalgebra of~$\frak{sp}_{2m}(\C)$ that depends on $\bX$, $\bmu$ and $\bm$. 

\begin{Rem}
The weights of $\mathfrak{gl}_{2n}(\C)$ are in bijection with non-increasing sequences of integers in $\Z^n$. If two such sequences $\delta,\delta'$ differ by a multiple of $(1,\ldots,1)\in \Z^n$, then the corresponding Schur functions~$\fs_{\delta}$ and~$\fs_{\delta'}$ will differ by a power of $x_1\cdots x_{2n}$. Since the characters of  $V^{A}(\delta)$ and $V^{A}(\delta')$ are the specialisation of~$\fs_{\delta}$ and $\fs_{\delta'}$ at $(x_1,\ldots,x_n,x_1^{-1},\ldots,x_n^{-1})$, these two characters will be equal. As a consequence, it is enough in the tensor products defined above to restrict ourself to partitions.
 \end{Rem}

\medskip

Our first task in this section is to defined the module $V^{\bX^\ast}_{\bm^\ast}(\widehat{\bmu})$. We start by extending the definition of the map~$\wh{\ \cdot\ }$ to $r$-tuples of partitions. Let  $\bmu=(\mu^{(1)},\ldots,\mu^{(r)})$ be a sequence of partitions such that $\mu^{(j)}\in \cP_{n\times m_j}$. We set
$$\wh{\bmu} =  \big(\wh{\mu^{(r)}},\ldots,\wh{\mu^{(1)}}\big)$$
where for each $1\leq j\leq r$, the partition $\wh{\mu^{(j)}}$ is defined with respect to the pair  $(n\times m_{j})$ and lies in $\cP_{m_j\times n}$.
Note that the definition of this map depends on the pair $(n,\bm)$. 

\begin{Rem}
\begin{enumerate}
\item Let $\mu\in \cP_{n,m}$ and write $\mu'=(\mu'_1,\ldots,\mu'_m)$. Let $\bm = (m_1,\ldots,m_r)$ be such that $\sum m_j = m$. If we see $\mu$ as an $r$-tuple of partitions $\bmu = (\mu^{(1)},\ldots,\mu^{(r)})$ as follows
$$\big(
\underset{\mu^{(1)}}{\underbrace{\mu'_1,\ldots,\mu'_{m_1}}},
\underset{\mu^{(2)}}{\underbrace{\mu'_{m_1+1},\ldots,\mu'_{m_2}}},
\ldots,
\underset{\mu^{(r)}}{\underbrace{\mu'_{m-m_r+1},\ldots,\mu'_{m_r}}}\big)
$$
then the partition $\wh{\bmu}$ computed with respect to the pair $(n,\bm)$ is equal, as an element of $\Z^m$, to the partition $\wh{\mu}$ computed with respect to the pair $(n,m)$.
\item Let $\bmu = (\mu^{(1)},\ldots,\mu^{(r)})$ be such that $\mu^{(j)}\in \cP_{n,m_j}$ and let $\bmu'=({\mu^{(1)}}',\ldots,{\mu^{(r)}}')$.
Let $\bm = (m_1,\ldots,m_r)$ and $m = \sum m_j$. We have
$$\wh{\bmu}= \sI(\bmu') + n\cdot \omega_m$$
where $\omega_m = (1,\ldots,1)$ and  the equality is an equality in $\Z^m$. 
\end{enumerate}
\end{Rem}

\begin{Exa}
\label{widehat}
Consider the sequence of partitions 
$$\bmu = (\mu^{(1)},\mu^{(2)},\mu^{(3)}) = \bigg(\ 
\scalebox{.5}{\ydiagram{2,1,1}}\ ,\ 
\scalebox{.5}{\ydiagram{2}}\ ,\ 
\scalebox{.5}{\ydiagram{3,2}}\ \bigg)\in \cP_{3,2}\times \cP_{3,2}\times \cP_{3,3}.$$
Let $n=3$ and $\bm=(2,2,3)$.
We compute the image of $\bmu$ under the map $\wh{\ \cdot\ }$ 
associated to $(n,\bm)$ by first taking the complement of the partitions of $\bmu$ (in their respective rectangles), which yields
$$
\bigg(\ 
\scalebox{.5}{\ydiagram[*(white)]{2,1,1}*[*(green!50)]{2,2,2}}\ ,\ 
\scalebox{.5}{\ydiagram[*(white)]{2}*[*(green!50)]{2,2,2}}\ ,\ 
\scalebox{.5}{\ydiagram[*(white)]{3,2}*[*(green!50)]{3,3,3}}\ \bigg)\in \cP_{3,2}\times \cP_{3,2}\times \cP_{3,3},$$
and then by taking the conjugate of each green partition and reversing the order of the triple,
which yields
$$\wh{\bmu} = \bigg(\ 
\scalebox{.5}{\ydiagram{2,1,1}}\ ,\ 
\scalebox{.5}{\ydiagram{2,2}}\ ,\ 
\scalebox{.5}{\ydiagram{2}}\ \bigg)\in \cP_{3,3}\times \cP_{2,3}\times \cP_{2,3}.$$
Finally, note that $m=7$ and that 
\begin{align*}
\sI(\bmu') + 3\cdot \omega_7 &= \sI(3,1,1,1,2,2,1)  + 3\cdot \omega_7 \\
&= (-1,-2,-2,-1,-1,-1,-3)  + 3\cdot \omega_7 \\
& = (2,1,1,2,2,2,0)
\end{align*}
which is indeed equal to $\wh{\bmu}$ as an element of $\Z^7$.
\end{Exa}

Next we need to define the module  $V^{\bX^\ast}_{\bm^\ast}(\widehat{\bmu})$. To simplify the exposition, we will explain how to construct the module $V^{\bX}_{\bk}(\bnu)$ associated to 
\begin{itemize}
\item a sequence $\bk = (k_1,\ldots,k_r)$ of positive integers;
\item a sequence $\bX = (X_1,\ldots,X_r)$ of symbols in $\{A,C\}$;
\item a sequence $\bnu = (\nu^{(1)},\ldots,\nu^{(r)})$ of partitions such that $\nu^{(j)}\in\cP_{k_j}$.
\end{itemize}
We set $m = \sum k_i$ and we define the integers $K_0,\ldots,K_{r}$ by setting
$$K_0 = 0 \qu{ and } K_j = \sum_{i=1}^{j} k_i.$$
The  algebra $\mathfrak{g}_{(\bX,\bk)}$ is defined to be the subalgebra of $\mathfrak{sp}_{2k}$ associated to the root system
$$R_{(\bX,\bk)}=\bigsqcup_{j=1}^{r}R^{X_j}_{[K_{j-1}+1,K_{j}]}$$
where  we have set for any pair of integers $(a,b)$:
\begin{align*}
R^{A}_{[a,b]}  &  =\{\varepsilon_{i}-\varepsilon_{j}\mid a\leq i<j\leq
b\}\text{ and }\\
R_{[a,b]}^{C}  &  =\{\varepsilon_{i}-\varepsilon_{j}\mid a\leq i<j\leq
b\}\cup\{\varepsilon_{i}+\varepsilon_{j}\mid a\leq i<j\leq b\}\cup \{2\eps_i\mid a\leq i \leq b\}.
\end{align*}
In other words, we have
$$\mathfrak{g}_{(\bX,\bk)}=\mathfrak{g}_{1}\oplus\cdots\oplus\mathfrak{g}_{r}
\qu{where}
\mathfrak{g}_{j}=\left\{
\begin{array}
[c]{c}%
\mathfrak{gl}_{k_{j}}(\C)\text{ if }X_{j}=A\\
\mathfrak{sp}_{2k_{j}}(\C)\text{ if }X_{j}=C
\end{array}
\right.
$$
Let $P_{+}^{(\bX,\bk)}$ be the set of dominant weights
of $\mathfrak{g}_{(\bX,\bk)}$. A weight $\bnu$ in $P_{+}^{(\bX,\bk)}$ is a sequence $\bnu=(\nu^{(1)},\ldots,\nu^{(r)})$ of partitions such that $\nu^{(j)}\in \cP_{k_j}$. We define $V^{\bX}_{\bk}(\bnu)$ to be the $\mathfrak{g}_{(\bX,\bk)}$-module of highest weight $\bnu$.

\medskip

Let us now recall how to compute the branching
coefficient  $[V(\kappa):V^{\bX}_{\bk}(\bnu)]$ where $\kappa\in \cP_m$. To do so, we need
the partition function $\mathcal{P}_{(\bX,\bk)}$ defined
by the expansion%
$$
 \prod_{\al\in R^+\backslash R_{(\bX,\bk)}} \dfrac{1}{1-x^\al}
= \sum_{\beta\in \Z^m} \cP_{(\bX,\bk)}(\beta)\sx^\beta.
$$
The following proposition is a consequence of the Weyl character formula. Recall that $W_m$ is the Weyl group of type $C_m$ and that $\rho_m$ is the half sum of positive roots, that is $\rho_m = (m,\dots,1)$.

\begin{Prop}[{\cite[Theorem 8.2.1]{GW99}}]
\label{Prop_Branching}With the previous notation, we have
\[
\left[V(\kappa):V^{\bX}_{\bk}(\bnu)\right]=\sum_{w\in W_{m}}%
\varepsilon(w)\mathcal{P}_{(\bX,\bk)}(w(\kappa+\rho
_{m})-(\boldsymbol{\nu}+\rho_{m})).
\]

\end{Prop}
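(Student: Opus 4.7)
The plan is to derive the formula by relativising the derivation of Kostant's multiplicity formula (\Cref{Kostant}) to the pair $\mathfrak{sp}_{2m}\supset\mathfrak{g}_{(\bX,\bk)}$. Write $W_{(\bX,\bk)}=W_1\times\cdots\times W_r$ for the Weyl group of $\mathfrak{g}_{(\bX,\bk)}$, $\rho_{(\bX,\bk)}=\rho_1+\cdots+\rho_r$ for the half-sum of its positive roots, and $a^{(\bX,\bk)}_\lambda=\sum_{w\in W_{(\bX,\bk)}}\varepsilon(w)e^{w\lambda}$ for the corresponding anti-invariant sum. Since $V^{\bX}_{\bk}(\bnu)$ is the tensor product of the irreducible $\mathfrak{g}_j$-modules $V(\nu^{(j)})$, applying the Weyl character formula to each factor yields
\[
\Char\bigl(V^{\bX}_{\bk}(\bnu)\bigr)=\frac{a^{(\bX,\bk)}_{\bnu+\rho_{(\bX,\bk)}}}{a^{(\bX,\bk)}_{\rho_{(\bX,\bk)}}}.
\]

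I would then multiply the branching identity $\Char(V(\kappa))=\sum_\bnu[V(\kappa):V^{\bX}_{\bk}(\bnu)]\Char(V^{\bX}_{\bk}(\bnu))$ by $a^{(\bX,\bk)}_{\rho_{(\bX,\bk)}}$ and extract the coefficient of $e^{\bnu+\rho_{(\bX,\bk)}}$ on each side. On the right-hand side, $\bnu+\rho_{(\bX,\bk)}$ is strictly dominant for $\mathfrak{g}_{(\bX,\bk)}$, so it appears with coefficient $1$ in $a^{(\bX,\bk)}_{\bnu+\rho_{(\bX,\bk)}}$ and with coefficient $0$ in every other $a^{(\bX,\bk)}_{\bnu'+\rho_{(\bX,\bk)}}$; the extracted coefficient is therefore exactly $[V(\kappa):V^{\bX}_{\bk}(\bnu)]$. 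On the left, using the Weyl denominator identity $a_{\rho_m}=e^{\rho_m}\prod_{\alpha\in R^+}(1-e^{-\alpha})$ together with its analogue for $\mathfrak{g}_{(\bX,\bk)}$, the factor $\prod_{\alpha\in R^+_{(\bX,\bk)}}(1-e^{-\alpha})$ cancels and one is left with
\[
\Char(V(\kappa))\cdot a^{(\bX,\bk)}_{\rho_{(\bX,\bk)}}=a_{\kappa+\rho_m}\cdot e^{\rho_{(\bX,\bk)}-\rho_m}\cdot\prod_{\alpha\in R^+\setminus R_{(\bX,\bk)}}\frac{1}{1-e^{-\alpha}}.
\]
Rewriting the last product as $\sum_\beta\mathcal{P}_{(\bX,\bk)}(\beta)e^{-\beta}$ (which is equivalent to the defining identity after the change of variable $\alpha\mapsto-\alpha$), expanding $a_{\kappa+\rho_m}$ as an alternating sum and extracting the coefficient of $e^{\bnu+\rho_{(\bX,\bk)}}$ gives
\[
[V(\kappa):V^{\bX}_{\bk}(\bnu)]=\sum_{w\in W_m}\varepsilon(w)\,\mathcal{P}_{(\bX,\bk)}\bigl(w(\kappa+\rho_m)+\rho_{(\bX,\bk)}-\rho_m-(\bnu+\rho_{(\bX,\bk)})\bigr),
\]
and the two occurrences of $\rho_{(\bX,\bk)}$ inside the argument cancel, producing exactly the claimed formula.

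The principal bookkeeping step is the splitting of the Weyl denominator between $R^+_{(\bX,\bk)}$ and its complement, which isolates the partition function $\mathcal{P}_{(\bX,\bk)}$ attached to $R^+\setminus R_{(\bX,\bk)}$, together with the simultaneous tracking of the $\rho$-shifts so that only $\rho_m$ appears in the final formula. Beyond this, the derivation is a direct generalisation of the classical computation underlying \Cref{Kostant}.
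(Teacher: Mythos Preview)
The paper does not give its own proof of this proposition: it is stated with a citation to \cite[Theorem 8.2.1]{GW99} and used as a black box. Your derivation is the standard one (the relative Kostant argument) and is correct; the splitting of the Weyl denominator and the tracking of the $\rho$-shifts are handled properly, and the passage from $\prod_{\alpha}\frac{1}{1-e^{-\alpha}}$ to $\sum_\beta\mathcal{P}_{(\bX,\bk)}(\beta)e^{-\beta}$ is justified by applying the ring involution $e^\gamma\mapsto e^{-\gamma}$ to the paper's defining identity (your phrase ``change of variable $\alpha\mapsto-\alpha$'' is slightly informal but the intended meaning is clear).
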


We are now ready to state the main theorem of this section. 
For any tuple $\ba = (a_1,\ldots,a_k)$ we write $\ba^\ast$ for the inverse tuple $(a_k,\ldots,a_1)$. 
\begin{Th}
\label{generalised_duality}
Let $\bmu=(\mu^{(1)},\ldots,\mu^{(r)})$ be a sequence of partitions such that $\mu^{(j)}\in \cP_{n\times m_j}$.  Let $\bX=(X_1,\ldots,X_r)$ be a sequence of symbols in $\{A,C\}$ and  $\bm = (m_1,\ldots,m_r)\in\Z_{\geq 1}^r$. For all $\la\in \cP_n$, we have
$$\left[ V^{X_{1}}(\mu^{(1)})\otimes\cdots\otimes V^{X_{r}} (\mu^{(r)}) : V(\la) \right] = 
\left[V(\widehat{\lambda}):V^{\bX^\ast}_{\bm^\ast}(\widehat{\bmu})\right].
$$
\end{Th}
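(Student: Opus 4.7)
My strategy is to follow, step by step, the proof of \Cref{Thm_Howe_duality} in \Cref{Sec_HoweDuality}, replacing the one-column factors ${\sf \Lambda}^{\mu'_j}(\C^{2n})$ by the full characters $\Char V^{X_j}(\mu^{(j)})$ and the twisted Kostant function $\wP$ by one adapted to the subalgebra $\mathfrak{g}_{(\bX^\ast,\bm^\ast)}$. The first step is to express the product of characters as $\sE$ of a single Laurent polynomial. For $X_j = C$, the type $C_n$ Jacobi-Trudi formula of \Cref{Sec_HoweDuality} combined with \Cref{delta_C} (applied in $m_j$ variables) gives $\Char V^C(\mu^{(j)}) = \sE_{m_j}\bigl(\Delta^C_{m_j}\cdot \sx^{(\mu^{(j)})'}\bigr)$; for $X_j = A$, the type $A$ Jacobi-Trudi formula (applied to the symplectic variables $x_1^{\pm 1},\ldots, x_n^{\pm 1}$) together with \Cref{delta_A} similarly gives $\Char V^A(\mu^{(j)}) = \sE_{m_j}\bigl(\Delta^A_{m_j}\cdot \sx^{(\mu^{(j)})'}\bigr)$. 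Since the $r$ blocks of dummy variables $\{x_{K_{j-1}+1},\ldots, x_{K_j}\}$ are pairwise disjoint, iterating \Cref{Lem_useful} produces a single identity
\[\prod_{j=1}^r \Char V^{X_j}(\mu^{(j)}) = \sE\!\left(\Delta_{(\bX,\bm)}\cdot \sx^{\bmu'}\right), \qquad \Delta_{(\bX,\bm)} := \prod_{j=1}^r \Delta^{X_j}_{m_j, \text{block } j}.\]

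The second step is to insert the trivial factor $\Delta^C_m/\Delta^C_m$ so that \Cref{delta_C} becomes applicable, and to expand $\Delta_{(\bX,\bm)}/\Delta^C_m$ as a generating series. Tracking how $\sI$ acts on roots (it permutes the short positive roots of each block under the reversal of block order, and sends long positive roots to the negatives of long positive roots in the reverse block) I obtain
\[\Delta_{(\bX,\bm)} = \sI\!\left(\prod_{\al\in R^+_{(\bX^\ast,\bm^\ast)}}(1-e^\al)\right).\]
Combining this with the formula $\Delta^C_m = \sI\bigl(\prod_{\al\in R^+_{C_m}}(1-e^\al)\bigr)$ and imitating the proof of \Cref{PP_tilde} yields
\[\frac{\Delta_{(\bX,\bm)}}{\Delta^C_m} = \sum_{\beta\in\Z^m}\cP_{(\bX^\ast,\bm^\ast)}(\sI(\beta))\,\sx^\beta.\]
Applying \Cref{delta_C} term by term, then the straightening of \Cref{straightening_C}, and identifying $\sv_{\la'} = \fs_\la$ for $\la\in\cP_{n,m}$, I arrive at
\[\bigl[V^{X_1}(\mu^{(1)})\otimes\cdots\otimes V^{X_r}(\mu^{(r)}):V(\la)\bigr] = \sum_{w\in W_m}\eps(w)\,\cP_{(\bX^\ast,\bm^\ast)}\bigl(\sI(w\circ\la'-\bmu')\bigr).\]

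The last step is to recast this sum as a branching multiplicity. Verbatim as in the end of \Cref{Sec_HoweDuality}, using $\sI(\delta_{m,n}) = \rho_m + n\omega_m$, the equalities $\sI(\la') + n\omega_m = \wh{\la}$ and $\sI(\bmu') + n\omega_m = \wh{\bmu}$, and the change of summation variable $w \mapsto w' = \sI w \sI^{-1}$ (a sign-preserving bijection of $W_m$), the right-hand side rewrites as $\sum_{w'}\eps(w')\,\cP_{(\bX^\ast,\bm^\ast)}\bigl(w'(\wh\la+\rho_m) - (\wh\bmu+\rho_m)\bigr)$, which equals $[V(\wh\la):V^{\bX^\ast}_{\bm^\ast}(\wh\bmu)]$ by \Cref{Prop_Branching}. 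The main obstacle is the bookkeeping showing $\Delta_{(\bX,\bm)} = \sI\bigl(\prod_{R^+_{(\bX^\ast,\bm^\ast)}}(1-e^\al)\bigr)$: the involution $\sI$ both reverses the block order \emph{and} sends long positive roots to the negatives of long positive roots, which is precisely what forces the apparition of the reversed pair $(\bX^\ast,\bm^\ast)$ on the right-hand side of the theorem. Once this identity is in hand, every other ingredient is a direct rank-$m$ analogue of the rank-one computations of the Howe duality proof.
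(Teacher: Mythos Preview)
Your proposal is correct and follows essentially the same route as the paper: the paper proves $\fs_{\bmu}^{(\bX,\bm)}=\sE(\Delta_{(\bX,\bm)}\sx^{\bmu'})$ (your first step, their Lemma~\ref{Lemma_EDelta(X,a)}), then inserts $\Delta^C_m/\Delta^C_m$, identifies the twisted partition function via $\widetilde{\cP}_{(\bX,\bm)}(\beta)=\cP_{(\bX^\ast,\bm^\ast)}(\sI(\beta))$ (your second step), applies the straightening of \Cref{straightening_C}, and finishes with the same change of variable $w\mapsto \sI w\sI$ to match \Cref{Prop_Branching}. Your key bookkeeping identity $\Delta_{(\bX,\bm)}=\sI\bigl(\prod_{\al\in R^+_{(\bX^\ast,\bm^\ast)}}(1-e^\al)\bigr)$ is exactly what the paper encodes via its $x^{[\al]}$ notation and the observation that $\sI$ interchanges $R_{(\bX,\bm)}$ and $R_{(\bX^\ast,\bm^\ast)}$.
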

The rest of this section is devoted to the proof of the theorem. From now on and until the end of the section, we fix $\bmu$, $\bX$ and $\bm$ as in the theorem. Further, we set $m = \sum m_i$ and we define the integers $M_0,\ldots,M_{r}$ by the relations
$$M_0 = 0 \qu{ and } M_j = \sum_{i=1}^{j} M_i.$$
For any $j=1,\ldots,r,$ and any
partition $\delta\in \cP_n$ write 
\begin{itemize}
\item $\fs_{\delta}^{C}$ for the character of the irreducible module $V(\delta)$ of highest weight $\delta$ in $\frak{sp}_{2n}(\C)$ 
\item $\fs_{\delta}^{A}$ for the function $\fs_{\delta}(x_{1},\ldots,x_{n},\frac{1}{x_{n}},\ldots,\frac{1}{x_{1}})$ where $\fs_\delta$ is the Schur function of type $A$ in $2n$ variables.
\end{itemize}
The character of $ V^{X_{1}}(\mu^{(1)})\otimes\cdots\otimes V^{X_{r}} (\mu^{(r)})$ is
\[
\fs_{\bmu}^{(\bX,\bm)}:=\prod_{j=1}^{r}%
\fs_{\mu^{(j)}}^{X_{j}}\in\text{$\mathrm{char}$}(\mathfrak{sp}_{2n}).
\]
Thus, if we define the coefficients $m_{\lambda,\bmu}^{(\bX,\bm)}$ by decomposing the above character in the basis of irreducible characters in type $C$
\[
\fs_{\bmu}^{(\bX,\bm)}=\sum_{\lambda
\in\mathcal{P}_{n}}m_{\lambda,\bmu}^{(\bX%
,\bm)}\fs_{\lambda}^{C_{n}}%
\]
then the theorem states that
$$m_{\lambda,\bmu}^{(\bX,\bm)} = \left[V(\widehat{\lambda}):V^{\bX^\ast}_{\bm^\ast}(\widehat{\bmu})\right]
\qu{ for all $\la\in \cP_n$.}$$

\medskip

Let 
\[
\bigtriangleup_{(\bX,\bm)}=\prod_{\alpha\in
R_{(\bX,\bm)}}(1-x^{[\alpha]})\text{ where }x^{[\alpha
]}=\left\{
\begin{array}
[c]{l}%
\frac{x_{i}}{x_{j}}\text{ if }\alpha=\varepsilon_{i}-\varepsilon_{j} \text{ where $i<j$}\\
\frac{1}{x_{i}x_{j}}\text{ if }\alpha=\varepsilon_{i}+\varepsilon_{j}\text{ where $i\leq j$}%
\end{array}
\right.
\]

Let $\boldsymbol{\beta} = (\beta^{(1)},\ldots,\beta^{(r)})$ be an $r$-tuple of partitions such that $\beta^{(j)} = (\beta^{(j)}_1,\ldots,\beta^{(j)}_{m_j}) \in \cP_{m_j}$. Then since $\sum m_i = m$,  $\boldsymbol{\beta}$ can be seen as an element of $\Z^m$. We can then define $\sx^{\boldsymbol{\beta}}$ and the map $\sE$ that sends $\sx^{\boldsymbol{\beta}}$ to $\se_{{\boldsymbol{\beta}}}$ as in Section~\ref{Sec_HoweDuality}.

\begin{Lem}
\label{Lemma_EDelta(X,a)} We have 
$\fs_{\bmu}^{(\bX,\bm)}=\sE(\bigtriangleup_{(\bX,\bm)}\sx^{\bmu^{\prime}}).$

\end{Lem}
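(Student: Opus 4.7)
The plan is to reduce the identity to a blockwise application of the determinantal formulas of Propositions~\ref{delta_A} and~\ref{delta_C}, together with the Jacobi-Trudi identities in types $A$ and $C$ that were recalled in Sections 3 and 4.

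First, I would observe that the positive system $R_{(\bX,\bm)}$ decomposes into $r$ disjoint pieces, each supported on a single block of variables: for every $j$, the subsystem $R^{X_j}_{[M_{j-1}+1,M_j]}$ only involves indices lying in $\{M_{j-1}+1,\ldots,M_j\}$. Consequently, $\bigtriangleup_{(\bX,\bm)}$ factors as a product $\prod_{j=1}^r \bigtriangleup_j$, where
$$\bigtriangleup_j := \prod_{\alpha \in R^{X_j}_{[M_{j-1}+1,M_j]}} (1-x^{[\alpha]})$$
is a polynomial in only the variables $x_{M_{j-1}+1},\ldots,x_{M_j}$. Likewise, since the parts of $\bmu'$ are obtained by concatenation of the conjugate partitions $(\mu^{(j)})'$, the monomial $\sx^{\bmu'}$ factors as $\prod_{j=1}^r \sx^{(\mu^{(j)})'}$, the $j$-th factor again involving only the variables in the $j$-th block.

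Second, by iterating Lemma~\ref{Lem_useful} (and using linearity of $\sE$ on the monomial expansion of each $\bigtriangleup_j \sx^{(\mu^{(j)})'}$, whose support sits inside the $j$-th block), I obtain
$$\sE\bigl(\bigtriangleup_{(\bX,\bm)} \cdot \sx^{\bmu'}\bigr) \;=\; \prod_{j=1}^r \sE\bigl(\bigtriangleup_j \cdot \sx^{(\mu^{(j)})'}\bigr).$$
Each factor $\bigtriangleup_j$ coincides, after the obvious relabeling of variables, with $\Delta^{X_j}_{m_j}$ from Section 3 or Section 4; hence Proposition~\ref{delta_A} (if $X_j = A$) or Proposition~\ref{delta_C} (if $X_j = C$) yields
$$\sE\bigl(\bigtriangleup_j \cdot \sx^{(\mu^{(j)})'}\bigr) = \sv_{(\mu^{(j)})'},$$
the corresponding determinant in the $\se_k$'s (which here denote the elementary symmetric functions in the $2n$ variables $x_1^{\pm 1},\ldots,x_n^{\pm 1}$, so that the proofs of those propositions apply verbatim). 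The Jacobi-Trudi formula in type $X_j$ then identifies this determinant with $\fs_{\mu^{(j)}}^{X_j}$.

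Finally, multiplying these identities over $j=1,\ldots,r$ gives
$$\sE\bigl(\bigtriangleup_{(\bX,\bm)} \cdot \sx^{\bmu'}\bigr) \;=\; \prod_{j=1}^r \fs_{\mu^{(j)}}^{X_j} \;=\; \fs_{\bmu}^{(\bX,\bm)},$$
as desired. There is no real obstacle: once the blockwise factorization of $\bigtriangleup_{(\bX,\bm)}$ and $\sx^{\bmu'}$ is observed, the statement is a clean iterated application of Lemma~\ref{Lem_useful} followed by the two determinantal identities already in hand.
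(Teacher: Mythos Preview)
Your proposal is correct and follows essentially the same approach as the paper's own proof: blockwise factorisation of $\bigtriangleup_{(\bX,\bm)}\sx^{\bmu'}$ into pieces with separated variables, iterated application of Lemma~\ref{Lem_useful}, and then invoking Propositions~\ref{delta_A} and~\ref{delta_C} together with the Jacobi--Trudi identities on each block. The only cosmetic difference is that the paper combines the determinantal identity and the Jacobi--Trudi step into a single assertion per block, whereas you spell them out as two steps.
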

\begin{proof}
We have 
$$R_{(\bX,\bm)}=\bigsqcup_{j=1}^{r}R^{X_j}_{[M_{j-1}+1,M_{j}]}$$
and 
$$\bigtriangleup_{(\bX,\bm)}\sx^{\bmu^{\prime}}
=\prod_{j=1}^{r}\left(  \prod_{\alpha\in R^{X_j}_{[M_{j-1}+1,M_{j}]}}(1-x^{[\alpha]})\right)\sx^{\bmu'}.$$
Let $\bmu' = ({\mu^{(1)}}',\ldots,{\mu^{(r)}}')$ and ${\mu^{(j)}}' = ({\mu_1^{(j)}}',\ldots,{\mu_{m_j}^{(j)}}')\in \cP_{m_j}$.  For all $j=1,\ldots,r$, we set 
$$\sx^{{\mu^{(j)}}'} =x^{{\mu_1^{(j)}}'}_{M_{j-1}+1}\cdots x^{{\mu_{m_j}^{(j)}}'}_{M_j} $$
Then we obtain
\begin{align*}
\bigtriangleup_{(\bX,\bm)}\sx^{\bmu^{\prime}}
&=\prod_{j=1}^{r}\left(  \prod_{\alpha\in R^{X_j}_{[M_{j-1}+1,M_{j}]}}(1-x^{[\alpha]})\sx^{{\mu^{(j)}}'}\right).
\end{align*}
Observe that the variables appearing in each parenthesised expression are
separated.\ Therefore, we can apply Lemma \ref{Lem_useful} recursively  and
get
\[
\sE(\bigtriangleup_{(\bX,\bm)}\sx^{\bmu^{\prime
}})=\prod_{j=1}^{r}\sE\left(  \prod_{\alpha\in R^{X_j}_{[M_{j-1}+1,M_{j}]}}(1-x^{[\alpha]})\sx^{{\mu^{(j)}}'}\right).\]
But for any $j=1,\ldots,r,$ we have by Propositon \ref{delta_C} and its analogue
for the ordinary Jacobi-Trudi formula%
\[
\fs_{\mu^{(j)}}^{C}=\sE\left( \prod_{\alpha\in R^{X_j}_{[M_{j-1}+1,M_{j}]}}(1-x^{[\alpha]})\sx^{{\mu^{(j)}}'}\right)\qu{when} X_j = C
\]
and
\[
\fs_{\mu^{(j)}}^{A}=\sE\left( \prod_{\alpha\in R^{X_j}_{[M_{j-1}+1,M_{j}]}}(1-x^{[\alpha]})\sx^{{\mu^{(j)}}'}\right)\qu{when} X_j = A.
\]
Finally, we get 
$\fs_{\bmu}^{(\bX,\bm)}=\sE(\bigtriangleup_{(\bX,\bm)}\sx^{\bmu'})$ as desired.
\end{proof}
Define the partition function $\widetilde{\mathcal{P}}_{(\bX,\bm)}$ by
\[
\prod_{\alpha\in R^+\setminus R_{(\bX,\bm)}}\frac{1}{1-x^{[\alpha]}}
=\sum_{\beta\in\mathbb{Z}^{m}}\widetilde{\mathcal{P}}_{(\bX,\bm)}(\beta)\sx^{\beta}.
\]
\begin{Lem}
We have
\[
\widetilde{\mathcal{P}}_{(\bX,\bm)}(\beta)=\mathcal{P}_{(\bX^\ast,\bm^\ast)}(\sI(\beta))\text{ for any }\beta
\in\mathbb{Z}^{m}.
\]
\end{Lem}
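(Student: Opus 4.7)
The strategy is to mimic Lemma \ref{PP_tilde} in the general setting, by establishing the identity of formal Laurent series
\begin{equation*}
\sI\left(\frac{1}{\prod_{\alpha\in R^+\setminus R_{(\bX^\ast,\bm^\ast)}}(1-e^\alpha)}\right)
=\frac{1}{\prod_{\alpha\in R^+\setminus R_{(\bX,\bm)}}(1-x^{[\alpha]})}
\end{equation*}
and then comparing coefficients. The left-hand side expands as $\sum_\beta\mathcal{P}_{(\bX^\ast,\bm^\ast)}(\beta)\sx^{\sI(\beta)}=\sum_\beta\mathcal{P}_{(\bX^\ast,\bm^\ast)}(\sI(\beta))\sx^{\beta}$, and the right-hand side is $\sum_\beta \widetilde{\mathcal{P}}_{(\bX,\bm)}(\beta)\sx^\beta$, so matching coefficients gives the conclusion.

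To prove the displayed identity, I would reduce it (by taking reciprocals) to showing
\begin{equation*}
\prod_{\alpha\in R^+\setminus R_{(\bX^\ast,\bm^\ast)}}(1-e^{\sI(\alpha)})
=\prod_{\alpha\in R^+\setminus R_{(\bX,\bm)}}(1-x^{[\alpha]})
\end{equation*}
and then compare the factors on both sides. The analysis splits into two cases according to whether $\alpha$ is a short root $\eps_i-\eps_j$ (with $i<j$) or a long/sum root $\eps_i+\eps_j$ (with $i\leq j$). In the short root case, $\sI(\eps_i-\eps_j)=\eps_{m+1-j}-\eps_{m+1-i}$ remains positive, and by definition $x^{[\alpha]}=x^\alpha$, so the factor $(1-e^{\sI(\alpha)})$ matches a corresponding factor $(1-x^{[\alpha']})$. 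In the long/sum root case, $\sI(\eps_i+\eps_j)=-(\eps_{m+1-i}+\eps_{m+1-j})$, and since $x^{[\alpha]}=x^{-\alpha}$ for such $\alpha$, the factor $(1-e^{\sI(\alpha)})$ again matches a corresponding factor $(1-x^{[\alpha']})$.

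The bookkeeping step is to show that $\sI$ (together with the sign flip on the long/sum roots) gives a bijection between $R^+\setminus R_{(\bX^\ast,\bm^\ast)}$ and $R^+\setminus R_{(\bX,\bm)}$. The key computation here is that $\sI$ maps the index interval $[M^\ast_{k-1}+1,M^\ast_k]$ to $[M_{r-k}+1,M_{r-k+1}]$, using the identity $m-M^\ast_k=M_{r-k}$ (both equal $m_1+\cdots+m_{r-k}$). Combined with the fact that $X^\ast_k=X_{r-k+1}$, this intertwines the block structures $R_{(\bX^\ast,\bm^\ast)}=\bigsqcup_k R^{X^\ast_k}_{[M^\ast_{k-1}+1,M^\ast_k]}$ and $R_{(\bX,\bm)}=\bigsqcup_j R^{X_j}_{[M_{j-1}+1,M_j]}$ via the substitution $j=r-k+1$.

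The only mildly subtle point is keeping track of the type of each root and of the reversal of indices, so as to see that the decomposition into blocks on the $\bm^\ast$ side matches the decomposition on the $\bm$ side after applying $\sI$. Once this is in place, the remaining computation is formal and the result follows just as in Lemma \ref{PP_tilde}.
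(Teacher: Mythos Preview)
Your proposal is correct and follows essentially the same approach as the paper: apply the involution $\sI$ to the generating series and compare coefficients, reducing to the claim that $\sI$ interchanges $R_{(\bX,\bm)}$ and $R_{(\bX^\ast,\bm^\ast)}$. The paper simply asserts this last fact in one line, whereas you spell out the root-by-root matching and the index-interval bookkeeping, which is a welcome elaboration but not a different argument.
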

\begin{proof}
The involution $\sI$ sends the set $R_{(\bX,\bm)}$ to $R_{(\bX^\ast,\bm^\ast)}$. Therefore we have
\begin{align*}
\sI\left(\prod_{\alpha\in R^+\setminus R_{(\bX,\bm)}}\frac{1}{1-x^{[\alpha]}}\right)
= \prod_{\alpha\in R^+\setminus R_{(\bX^\ast,\bm^\ast)}}\frac{1}{1-x^{\alpha}}
= \sum_{\beta\in \Z^m} \cP_{(\bX^\ast,\bm^\ast)}(\beta)\sx^\beta
\end{align*}
which gives
$$\sum_{\beta\in\mathbb{Z}^{m}}\widetilde{\mathcal{P}}_{(\bX,\bm)}(\beta)\sx^{\beta}=\prod_{\alpha\in R^+\setminus R_{(\bX,\bm)}}\frac{1}{1-x^{[\alpha]}} = \sum_{\beta\in \Z^m} \cP_{(\bX^\ast,\bm^\ast)}(\sI(\beta))\sx^\beta$$
hence the result. 
\end{proof}
\begin{Prop}
For all $\lambda\in \cP_n$, we have 
$$m_{\lambda,\bmu}^{(\bX,\bm)}=\sum_{w\in W_m}\varepsilon(w)\widetilde
{\mathcal{P}}_{(\bX,\bm)}(w(\lambda^{\prime}+\delta_{m,n})-(\bmu^{\prime}+\delta_{m,n}).$$
\end{Prop}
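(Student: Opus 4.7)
The plan is to follow the template of the proof of \Cref{Thm_Howe_duality}, replacing $\Delta^C_m$ by $\bigtriangleup_{(\bX,\bm)}$ and the twisted Kostant function $\wP$ by $\wP_{(\bX,\bm)}$. All the ingredients needed to run this argument have been prepared in \Cref{Sec_HoweDuality}, together with the preceding lemmas of the current section (in particular \Cref{Lemma_EDelta(X,a)} which plays the role of \Cref{delta_C} at the level of the composite character).

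Concretely, I would start from the identity $\fs_\bmu^{(\bX,\bm)}=\sE(\bigtriangleup_{(\bX,\bm)}\sx^{\bmu'})$ provided by \Cref{Lemma_EDelta(X,a)}, and then factor
$$\bigtriangleup_{(\bX,\bm)}=\Delta^C_m\cdot\prod_{\alpha\in R^+\setminus R_{(\bX,\bm)}}\frac{1}{1-x^{[\alpha]}}=\Delta^C_m\cdot\sum_{\beta\in\Z^m}\wP_{(\bX,\bm)}(\beta)\sx^{\beta},$$
the second equality being the definition of $\wP_{(\bX,\bm)}$. Using the linearity of $\sE$ and \Cref{delta_C}, the expression $\sE(\bigtriangleup_{(\bX,\bm)}\sx^{\bmu'})$ rewrites as $\sum_{\beta}\wP_{(\bX,\bm)}(\beta)\sv_{\beta+\bmu'}$. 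Applying \Cref{straightening_C} to each $\sv_{\beta+\bmu'}$ (either it vanishes, or it equals $\eps(w)\sv_\gamma$ for a unique pair $(\gamma,w)$ with $\gamma\in\cP_{m,n}$ and $\gamma=w\circ(\beta+\bmu')$) and reindexing the sum by $\gamma$, together with the Jacobi-Trudi identity $\sv_{\la'}=\fs_\la^{C_n}$, produces
$$\fs_\bmu^{(\bX,\bm)}=\sum_{\la\in\cP_{n,m}}\left(\sum_{w\in W_m}\eps(w)\wP_{(\bX,\bm)}(w\circ\la'-\bmu')\right)\fs_\la^{C_n}.$$
Unfolding the dot action $w\circ\la'=w(\la'+\delta_{m,n})-\delta_{m,n}$ and extracting the coefficient of $\fs_\la^{C_n}$ then yields the claimed formula for $m_{\la,\bmu}^{(\bX,\bm)}$.

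Since every step is either a direct application of a previously established result or a routine manipulation in the ring of formal Laurent series, I do not anticipate any serious obstacle. The only mildly delicate point is to check that partitions $\gamma\in\cP_{m,n}$ lie strictly inside the chamber $t_{-\delta_{m,n}}(\sI(\cC_0))$, so that each $W_m$-orbit under the dot action contributes with full multiplicity $|W_m|$ at the reindexing step; but this is already implicit in the proof of \Cref{straightening_C}, since a $\gamma$ on a wall of this chamber would force $\sv_\gamma=0$ by the antisymmetry property established in part~(1) of that theorem.
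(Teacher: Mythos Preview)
Your proposal is correct and follows essentially the same route as the paper: start from \Cref{Lemma_EDelta(X,a)}, write $\bigtriangleup_{(\bX,\bm)}=\Delta^C_m\cdot(\bigtriangleup_{(\bX,\bm)}/\Delta^C_m)$, expand the quotient via the definition of $\wP_{(\bX,\bm)}$, apply \Cref{delta_C} term by term, and then straighten using \Cref{straightening_C}. The only difference is cosmetic: the paper phrases the factorisation as multiplying and dividing by $\Delta^C_m$, while you present it as a direct factorisation of $\bigtriangleup_{(\bX,\bm)}$; your discussion of the reindexing step is also slightly more explicit than the paper's one-line ``We conclude using Theorem~\ref{straightening_C}''.
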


\begin{proof}
By definition of $\Delta^C_m$ (see Section~\ref{Sec_HoweDuality}) we have 
$$ \dfrac{\bigtriangleup_{(\bX,\bm)}}{\Delta^C_m} = \prod_{\alpha\in R^+\setminus R_{(\bX,\bm)}}\frac{1}{1-x^{[\alpha]}}.$$
We obtain
\begin{align*}
s_{\bmu}^{(\bX,\bm)}&=\sE(\bigtriangleup_{(\bX,\bm)}\sx^{\bmu^{\prime}})\tag{Lemma \ref{Lemma_EDelta(X,a)}}\\
&=\sE(\Delta^C_m \cdot \dfrac{1}{\Delta^C_m}\bigtriangleup_{(\bX,\bm)}\sx^{\bmu^{\prime}})\\
&= \sE(\Delta^C_m \sum_{\beta\in\mathbb{Z}^{m}}\widetilde{\mathcal{P}%
}_{(\bX,\bm)}(\beta)\sx^{\beta+\bmu'})\\
&= \sum_{\beta\in\mathbb{Z}^{m}}\widetilde{\mathcal{P}}_{(\bX,\bm)}(\beta)\sE(\Delta_m \sx^{\beta+\bmu'})\\
&= \sum_{\beta\in\mathbb{Z}^{m}}\widetilde{\mathcal{P}}_{(\bX,\bm)}(\beta)\sv_{\beta+\bmu'}.
\end{align*}
We conclude  using Theorem \ref{straightening_C}.
\end{proof}

We are now ready to complete the proof of the main theorem. 

\medskip

\textit{Proof of \Cref{generalised_duality}.}
We have
\begin{align*}
m_{\lambda,\bmu}^{(\bX,\bm)}&=\sum_{w\in W_m}\varepsilon(w)\widetilde
{\mathcal{P}}_{(\bX,\bm)}\left(w(\lambda^{\prime}+\delta_{m,n})-(\bmu^{\prime}+\delta_{m,n})\right)\\
&=\sum_{w\in W_m}\varepsilon(w)\mathcal{P}_{(\bX^\ast,\bm^\ast)}\left(\sI(w(\lambda^{\prime}+\delta_{m,n}))-\sI(\bmu^{\prime}+\delta_{m,n})\right)\\
&=\sum_{w\in W_m}\varepsilon(w)\mathcal{P}_{(\bX^\ast,\bm^\ast)}\left(\sI(w(\lambda^{\prime}+\delta_{m,n}))-\sI(\bmu^{\prime})-n\cdot \om_m - \rho_m\right).
\end{align*}
By setting $w^{\prime}=\sI w\sI$ in the last sum and using Proposition
\ref{Prop_Branching}, this yields%
\begin{align*}
m_{\lambda,\bmu}^{(\bX,\ba)}&=\sum
_{w^{\prime}\in W_m}\varepsilon(w^{\prime})\mathcal{P}_{(\bX^\ast,\bm^\ast)}\left(  w^{\prime}(\sI(\lambda^{\prime})+n\omega
_{m})+\rho_{m})-(\sI(\bmu^{\prime})+n\omega_{m})-\rho_{m}\right)
\\
&=\sum_{w^{\prime}\in W_m}\varepsilon(w^{\prime})\mathcal{P}_{(\bX^\ast,\bm^\ast)}\left(  w^{\prime}(\widehat{\lambda}+\rho
_{m})-(\widehat{\bmu}-\rho_{m})\right)  \\
&=\left[V(\wh{\la}):V^{\bX^\ast}_{\bm^\ast}(\bmu)\right]
\end{align*}
as required. 
\hfill
$\square$


\section{Injectivity of the induction functor}

Theorem \ref{generalised_duality} permits to express tensor multiplicities of
$\mathfrak{sp}_{2n}(\C)$-modules in terms of branching coefficients in
irreducible $\mathfrak{sp}_{2m}$-modules. We started from the tensor product of $\mathfrak{sp}_{2n}(\C)$-modules 
$$V^{X_1}(\mu^{(1)})\otimes V^{X_2}(\mu^{(2)})\otimes \cdots \otimes V^{X_r}(\mu^{(r)})$$
 associated to the sequence $\bmu=(\mu^{(1)},\ldots,\mu^{(r)})$ of partitions such
that $\mu^{(j)}\in \cP_{n\times m_{j}}$ and to the sequence~$\bX=(X_1,\ldots,X_r)$ of symbols in $\{A,C\}$. This determined the dominant weight $\widehat
{\bmu}=\sI(\bmu^{\prime})+n\cdot \omega_{m}$ of the
subalgebra $\mathfrak{g}_{(\bX^\ast,\bm^\ast)}$ of
$\mathfrak{sp}_{2m}(\C)$ where $m=\sum m_i$ and $\bm = (m_1,\ldots,m_r)$.

\medskip

Conversely, we can start from a dominant weight $\wh{\bmu}$ of $\fg_{(\bX^\ast,\bm^\ast)}\subset \frak{sp}_{2m}(\C)$ and realise the associated branching coefficient
as a tensor product multiplicity for $\mathfrak{sp}_{2n}(\C)$-modules. But here,
one has to keep in mind that the datum of $\widehat{\bmu}$ and
$\fg_{(\bX^\ast,\bm^\ast)}$ does not determine the integer $n$.\ One can only say that
$n$ is at least $n_{\wh{\bmu}}$, the greatest
part in the partitions $\wh{\bmu}_{(j)}$ where $\wh{\bmu} = (\wh{\bmu}_{(1)},\ldots,\wh{\bmu}_{(r)})$. This means that for
any integer $n\geq n_{\wh{\bmu}}$, we have an $r$-tuple of partitions
$\bmu[n]=(\mu^{(1)}[n],\ldots,\mu^{(r)}[n])$ such that $\wh{\bmu[n]} = \wh{\bmu}$ (where the map $\wh{\ \cdot\ }$ is defined with respect to the pair $(n,\bm)$).
Observe in particular that for any
$j=1,\ldots,r$, we have
\begin{equation}
\mu^{(j)}[a+n_{\widehat{\bmu}}]=\big(\underset{a\text{ times}}{\underleftrightarrow{m_{j},\ldots,m_{j}}},\mu^{(j)}[n_{\widehat{\bmu}}]\big)
\text{ for any integer }a\geq0 \label{n_trick}%
\end{equation}
or equivalently, $\mu^{(j)}[n+1]$ is obtained by adding a part $m_{j}$ to
$\mu^{(j)}[n]$ for any $n\geq n_{\widehat{\bmu}}$. 
\begin{Exa} 
Consider the following sequence of partition:
$$\wh{\bmu} = \bigg(\ 
\scalebox{.5}{\ydiagram{2,1,1}}\ ,\ 
\scalebox{.5}{\ydiagram{2,2}}\ ,\ 
\scalebox{.5}{\ydiagram{2}}\ \bigg)$$
so that $n_{\wh{\bmu}} = 3$. 
We have seen in Example \ref{widehat} that 
$$\bmu[3] =  \bigg(\ 
\scalebox{.5}{\ydiagram{2,1,1}}\ ,\ 
\scalebox{.5}{\ydiagram{2}}\ ,\ 
\scalebox{.5}{\ydiagram{3,2}}\ \bigg).$$
Now setting $n=5$ and adding $2$ parts of respective size $2,2$ and $3$ to $\bmu[3]$ we get 
 $$\bmu[5] =  \bigg(\ 
\scalebox{.5}{\ydiagram{2,2,2,1,1}}\ ,\ 
\scalebox{.5}{\ydiagram{2,2,2}}\ ,\ 
\scalebox{.5}{\ydiagram{3,3,3,2}}\ \bigg).$$
Then taking the complements in the rectangles of respective size $3\times 5$, $2\times 5$ and $2\times 5$ we obtain
$$ \bigg(\ 
\scalebox{.5}{\ydiagram[*(white)]{2,2,2,1,1}*[*(green!50)]{2,2,2,2,2}}\ ,\ 
\scalebox{.5}{\ydiagram[*(white)]{2,2,2}*[*(green!50)]{2,2,2,2,2}}\ ,\ 
\scalebox{.5}{\ydiagram[*(white)]{3,3,3,2}*[*(green!50)]{3,3,3,3,3}}\ \bigg),$$
and taking the conjugate of each green partition and reversing, we see that we obtain
$\wh{\bmu[5]} = \wh{\bmu}$ 
(here $\wh{\bmu[5]}$ is computed with respect to the pair $(5,(2,2,3))$).  
\end{Exa}
Then, one can
apply Theorem \ref{generalised_duality} and get for all $n\geq n_{\widehat{\bmu}}$ and all
partitions $\lambda\in P_n$ the equality
\[ 
m_{\lambda,\bmu[n]}^{(\bX,\bm)}%
=[V(\widehat{\lambda}):V^{\bX^\ast}_{\bm^\ast}(\widehat{\bmu})].
\]
In the relation above, the partition $\wh{\la}$ is defined from the action of the map $\wh{\ \cdot\ }$ corresponding to the pair $(n,m)$ and 
the $r$-partition $\wh{\bmu}$ is defined similarly with respect to the pair $(n,\bm)$ (in both cases, the same $n$ as in $\bmu[n]$).

\medskip

In the following, we fix two dominant weights $\wh{\bmu},\wh{\bnu}$ of the algebra $\mathfrak{g}_{(\bX^\ast,\bm^\ast)}\subset\mathfrak{sp}_{2m}$. From the definition, we see that $\wh{\bmu}$ and $\wh{\bnu}$ are sequences of partitions in which the $j$-th components lies in $\cP_{m_{r-j+1}}$. We set
$$\wh{\bmu} = (\wh{\mu}_{(1)},\ldots,\wh{\mu}_{(r)})\qu{ and }\wh{\nu} = (\wh{\nu}_{(1)},\ldots,\wh{\nu}_{(r)}).$$
Recall that $V^{\bX^\ast}_{\bm^\ast}(\wh{\bmu})$ and $V^{\bX^\ast}_{\bm^\ast}(\wh{\bnu})$ are the
associated highest weight $\mathfrak{g}_{(\bX^\ast,\bm^\ast)}$-modules. 
We consider the following problem.
\begin{problem}
\label{Problem} Assume that the induction of $V^{\bX^\ast}_{\bm^\ast}(\wh{\bmu})$ and $V^{\bX^\ast}_{\bm^\ast}(\wh{\bnu})$ to $\mathfrak{sp}_{2m}$ are isomorphic. What can we say about $\widehat{\bmu}$ and~$\widehat{\boldsymbol{\nu}}$?
\end{problem}

We are going to prove that in the case where all the components in $\bX^\ast = (X_r,\ldots,X_1)$ are of type
$C$ or when~$\bX^\ast$ is a parabolic Dynkin subdiagram (i.e. $X_{1}$ is the unique
component of type $C$) then the sequences $\widehat
{\bmu}$ and $\widehat{\boldsymbol{\nu}}$ coincide up to
permutations of their parts. Further, in the second case we will show that $\widehat{\mu}_{(r)}=\widehat{\nu}_{(r)}$. 
In particular, this proves the main conjecture of \cite{GL} in type $C$: in the parabolic case, the two previous induced modules 
are isomorphic if and only if the dominant weights $\wh{\bmu},\wh{\bnu}$ coincide up to an automorphism of the underlying parabolic Dynkin diagram. 
In \cite{GL}, this conjecture was proved in any finite types but only when $\wh{\bmu}$ and $\wh{\bnu}$ are far enough from the walls of Weyl chambers.

\medskip

To do this, recall first that the character ring
of type $C_{n}$ can be regarded as the ring $\mathbb{Z}^{W_{n}}[x_{1}^{\pm1},\ldots,x_{n}^{\pm 1}]$ of Laurent polynomials
fixed by permutations of the variables $x_{i}$ and the inversions
$x_{i}\mapsto\frac{1}{x_{i}}$. Also, we have a total order on the
monomials in $\mathbb{Z}[x_{1}^{\pm1},\ldots,x_{n}^{\pm 1}]$ defined by, for all $\beta,\gamma\in \Z^n$,
\[
\sx^{\beta}\leq \sx^{\gamma}\Longleftrightarrow\beta\leq_{\mathrm{lex}}\gamma
\]
where $\leq_{\mathrm{lex}}$ is the lexicographic order on $\mathbb{Z}^{n}$.
This enables us to define, for any $P\in\mathbb{Z}[x_{1}^{\pm1},\ldots,x_{n}^{\pm
1}]$ the monomial $\max(P)$ as the maximal monomial appearing in $P$. Given
$A$ and $B$ in $\mathbb{Z}^{W_{n}}[x_{1}^{\pm1},\ldots,x_{n}^{\pm 1}]$, we
have $\max(AB)=\max(A)\times\max(B)$. This implies the following useful lemma.

\begin{Lem}
\label{Lem_order}Consider $P$ and $Q$ in $\mathbb{Z}[x_{1}^{\pm1},\ldots
,x_{n}^{\pm 1}]$ such that $Q$ divides $P$. Then $\max(Q)\leq\max(P)$.
\end{Lem}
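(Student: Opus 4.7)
The plan is to combine the divisibility hypothesis with the multiplicativity of $\max$ stated in the paragraph preceding the lemma. Writing $P = QR$ with $R$ a nonzero Laurent polynomial, the identity $\max(P) = \max(Q)\cdot \max(R)$ reduces the lemma to showing that $\max(R) \geq 1$, i.e. that the exponent of the top monomial of $R$ is lex-nonnegative.

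In the setting of Section 8, $P$ and $Q$ are Weyl-invariant (they belong to the character ring $\mathbb{Z}^{W_n}[x_1^{\pm 1}, \ldots, x_n^{\pm 1}]$, which is the ambient setting throughout the section), so the quotient $R = P/Q$ is automatically $W_n$-invariant as well. The key step is then the following claim: the maximal monomial of any nonzero $W_n$-invariant Laurent polynomial has a dominant exponent. Indeed, writing $\max(R) = \sx^\gamma$, $W_n$-invariance implies $\sx^{w\gamma}$ appears with nonzero coefficient in $R$ for every $w \in W_n$; by lex-maximality, $w\gamma \leq_{\mathrm{lex}} \gamma$ for every $w$. Specialising to the adjacent transpositions $s_1, \ldots, s_{n-1}$ gives $\gamma_1 \geq \gamma_2 \geq \cdots \geq \gamma_n$, and specialising to the sign change $s_n$ on the last coordinate (which lies in $W_n$ in type $C_n$) forces $\gamma_n \geq 0$. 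Hence $\gamma$ is dominant, so $\gamma \geq_{\mathrm{lex}} 0$ and $\max(R) = \sx^\gamma \geq 1$, completing the argument.

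The proof is essentially routine; the only subtlety is that the inequality $\max(R) \geq 1$ genuinely uses $W_n$-invariance of $R$ (without it, a unit monomial $\sx^{-\beta}$ could divide $1$ and contradict the inequality). This is why the lemma must be interpreted inside the character ring $\mathbb{Z}^{W_n}[x_1^{\pm 1}, \ldots, x_n^{\pm 1}]$, which is precisely the ring in which it will be applied in the sequel.
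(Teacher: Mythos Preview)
Your argument is correct and in fact more careful than the paper's. The paper does not give a proof at all: it merely states the multiplicativity $\max(AB)=\max(A)\max(B)$ and then says ``This implies the following useful lemma.'' As you correctly observe, multiplicativity alone only gives $\max(P)=\max(Q)\max(R)$, and one still needs $\max(R)\geq 1$. Your observation that the lemma is literally false in $\mathbb{Z}[x_1^{\pm1},\ldots,x_n^{\pm1}]$ (e.g.\ $x_1$ divides $1$ but $\max(x_1)>\max(1)$) is well taken, and your fix---restricting to the $W_n$-invariant ring, which is where the lemma is actually applied in Remark~\ref{Rem_divide_equal}---is exactly right. The supporting claims are also correct: if $P,Q$ are $W_n$-invariant and $P=QR$ in an integral domain, then $R$ is $W_n$-invariant; and the lex-maximal monomial of a $W_n$-invariant Laurent polynomial has a dominant (hence lex-nonnegative) exponent. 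So your proof fills a genuine gap that the paper left implicit.
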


For any $\beta=(\beta_{1},\ldots,\beta_{n})\in\mathbb{Z}^{n}$, set $\left\vert
\beta\right\vert =\beta_{1}+\cdots+\beta_{n}$. Given $P=\sum_{\beta
\in\mathbb{Z}^{n}}c_{\beta}x^{\beta}$ (all but a finite number of coefficients
$c_{\beta}$ are equal to zero) in $\mathbb{Z}^{W_{n}}[x_{1}^{\pm1}%
,\ldots,x_{n}^{\pm 1}]$, define
\[
\mathrm{head}(P)=\sum_{\substack{
\beta\in\mathbb{Z}^{n} \\
\left\vert \beta\right\vert 
=\left\vert \max(P)\right\vert }}
c_{\beta}\sx^{\beta}\text{.}%
\]
We have the following easy lemma.

\begin{Lem}
\label{Lem_head}Let $P_{1},\ldots,P_{r}$ be a sequence of polynomials in
$\mathbb{Z}[x_{1}^{\pm1},\ldots,x_{n}^{\pm 1}]$. Then 
$$\mathrm{head}%
(P_{1}\times\cdots\times P_{r})=\mathrm{head}(P_{1})\times\cdots
\times\mathrm{head}(P_{r}).$$
\end{Lem}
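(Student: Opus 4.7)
The plan is induction on $r$; it therefore suffices to treat the case $r=2$, and for this I would exploit the $|\cdot|$-grading on the Laurent polynomial ring. Since $|\cdot|\colon\Z^n\to\Z$ is additive, multiplication respects the decomposition $\Z[x_1^{\pm 1},\ldots,x_n^{\pm 1}]=\bigoplus_{d\in\Z}R_d$, where $R_d=\mathrm{span}(x^\beta\mid|\beta|=d)$ and $R_aR_b\subseteq R_{a+b}$. For any Laurent polynomial $Q$, writing $Q=\sum_d Q^{(d)}$ for its graded decomposition, this yields the convolution identity
\[
(Q_1Q_2)^{(d)}=\sum_{a+b=d}Q_1^{(a)}Q_2^{(b)}.
\]

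First, the multiplicativity of $\max$ — which follows as in \Cref{Lem_order} from the compatibility of $\leq_{\mathrm{lex}}$ with addition on $\Z^n$ — gives $\max(P_1P_2)=\max(P_1)\max(P_2)$, hence $|\max(P_1P_2)|=d_1+d_2$ with $d_i:=|\max(P_i)|$. By the very definition of $\mathrm{head}$, this identifies $\mathrm{head}(P_i)=P_i^{(d_i)}$ and $\mathrm{head}(P_1P_2)=(P_1P_2)^{(d_1+d_2)}$, so the convolution identity specialises to
\[
\mathrm{head}(P_1P_2)=\sum_{a+b=d_1+d_2}P_1^{(a)}P_2^{(b)}.
\]

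The main step — and the hardest part of the argument — is to show that only the pair $(a,b)=(d_1,d_2)$ contributes to the right-hand side. This reduces to the bound $P_i^{(a)}=0$ for $a>d_i$, i.e.\ that every monomial $x^\beta$ appearing in $P_i$ satisfies $|\beta|\leq|\max(P_i)|$. This property is \emph{not} automatic for a general Laurent polynomial, but it holds in the context in which the lemma will be applied, where each $P_i$ is a product of irreducible characters of $\mathfrak{sp}_{2m}(\C)$-modules: every weight $\beta$ of such a character lies in the dominance order below the highest weight $\lambda=\max(P_i)$, and since every simple root of type $C_m$ has $|\alpha|\in\{0,2\}$, we obtain $|\lambda|-|\beta|\in 2\Z_{\geq 0}$, hence $|\beta|\leq|\lambda|$. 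With this bound in hand, the constraints $a+b=d_1+d_2$, $a\leq d_1$, $b\leq d_2$ force $(a,b)=(d_1,d_2)$; the sum collapses to $P_1^{(d_1)}P_2^{(d_2)}=\mathrm{head}(P_1)\,\mathrm{head}(P_2)$, and induction on $r$ delivers the general statement.
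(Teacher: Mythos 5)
The paper gives no proof of \Cref{Lem_head} at all (it is simply declared ``easy''), so there is nothing to compare your argument against; your graded-decomposition argument is the natural one and is correct. More importantly, you have correctly spotted that the lemma is \emph{literally false} for arbitrary Laurent polynomials: take $n=2$, $P_1=x_1+x_2^3$ and $P_2=x_1+x_2^{-1}$. Then $\max(P_i)=x_1$ and $\mathrm{head}(P_i)=x_1$, while $P_1P_2=x_1^2+x_1x_2^3+x_1x_2^{-1}+x_2^2$ has $\max(P_1P_2)=x_1^2$ and hence $\mathrm{head}(P_1P_2)=x_1^2+x_2^2\neq \mathrm{head}(P_1)\,\mathrm{head}(P_2)$. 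The hypothesis you isolate --- every monomial $\sx^\beta$ of $P_i$ satisfies $|\beta|\leq|\max(P_i)|$, i.e.\ $\max(P_i)$ realises the top total degree, so that $\mathrm{head}(P_i)$ is the top graded piece --- is exactly what is missing, and this class of Laurent polynomials is closed under multiplication, which is what makes your induction on $r$ legitimate. It does hold for every factor actually fed into the lemma in Section 8: for $\fs_\kappa^C$ because all weights $\beta$ of $V(\kappa)$ satisfy $|\kappa|-|\beta|\in 2\Z_{\geq 0}$ (only the long simple root has nonzero $|\cdot|$), and for the factors $\fs_{\mu^{(j)}[n]}(X^{\pm 1})$ --- which are restrictions of $\mathfrak{gl}_{2n}$-irreducibles, not irreducible $\mathfrak{sp}_{2n}$-characters, a small imprecision in your write-up --- because the branching formula expresses them as $\sum c^{\nu}_{\lambda,\delta}\fs_\lambda^C$ with $|\nu|-|\lambda|=|\delta|$ even. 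So your proof is complete for all cases needed, and your caveat is a genuine (if minor) correction to the statement as printed.
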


Now we need a result by Rajan \cite{Ra14} on the irreducibility of the
characters for $\mathfrak{sp}_{2n}$. In fact, we do not need Rajan's result in
its full generality and we will only state a weaker version, sufficient for
our purposes. Recall that $\rho_{n}=(n,n-1,\ldots,1)$ and set $\widetilde
{\rho}_{n}=(2n-1,2n-3,\ldots,1)$.

\medskip

For any partition $\delta\in \cP_n$,  let
\begin{itemize}
\item $\fs_{\delta}^{C}$ be the character of the irreducible module $V(\delta)$ of highest weight $\delta$ in $\frak{sp}_{2n}$, 
\item $\fs_{\delta}(X)$ be the usual Schur function associated to $\delta$ (i.e.  the character of the irreducible module $V(\delta)$ of highest weight $\delta$ in $\frak{gl}_{n}$) thus a symmetric polynomial in the set of variables $x_1,\ldots,x_n$,
\item $\fs_{\delta}(X^{\pm 1})$ be the Schur function  $\fs_{\delta}(X^{\pm 1}) = \fs_{\delta}(x_{1},\ldots,x_{n},\frac{1}{x_{n}},\ldots,\frac{1}{x_{1}})$ where $\fs_\delta$ is the Schur function in $2n$ variables.
\end{itemize}

\begin{Th}
\label{Th_Raj}Let $\delta\in \cP_n$ be a dominant weight for $\mathfrak{sp}_{2n}$
regarded as an element of $\mathbb{Z}^{n}$. Assume that the coordinates of
$\delta+\rho_{n}$ are relatively prime (i.e. they have no trivial common
divisor). Then the character $\fs_{\delta}^{C}$ is irreducible in
$\mathbb{C}^{W_{n}}[x_{1}^{\pm1},\ldots,x_{n}^{\pm 1}]$.
\end{Th}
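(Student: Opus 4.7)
The plan is to prove irreducibility by contradiction, combining the Weyl character formula with the leading-term analysis provided by Lemmas~\ref{Lem_order} and~\ref{Lem_head}. Suppose $\fs_\delta^C = P \cdot Q$ with $P,Q \in \mathbb{C}^{W_n}[x_1^{\pm 1}, \ldots, x_n^{\pm 1}]$ both non-units. Since $\mathbb{C}^{W_n}[x_1^{\pm 1}, \ldots, x_n^{\pm 1}]$ is the character ring of $\mathfrak{sp}_{2n}$, we may assume $P,Q$ have integer coefficients after rescaling. By \Cref{Lem_order}, the leading monomials satisfy $\max(P) = x^\alpha$ and $\max(Q) = x^{\delta - \alpha}$, and the $W_n$-invariance forces both $\alpha$ and $\delta - \alpha$ to be dominant integral weights, i.e.\ partitions with $0 \leq \alpha_i \leq \delta_i$ and $\alpha_1 \geq \cdots \geq \alpha_n \geq 0$.

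Next, I would multiply through by the Weyl denominator: the identity $\fs_\delta^C \cdot a_{\rho_n} = a_{\delta+\rho_n}$, where $a_\mu = \sum_{w \in W_n} \varepsilon(w) x^{w\mu}$, turns the hypothetical factorisation into the determinantal identity
\begin{equation*}
\det\!\left( x_i^{(\delta+\rho_n)_j} - x_i^{-(\delta+\rho_n)_j}\right)_{1\leq i,j \leq n} \;=\; P \cdot Q \cdot a_{\rho_n}.
\end{equation*}
The key analytic step is to perform a principal specialisation, say $x_i \mapsto q^{(\rho_n)_i}$, which sends $\fs_\delta^C$ to the $q$-analogue of Weyl's dimension formula,
\begin{equation*}
\fs_\delta^C\big|_{x_i = q^{(\rho_n)_i}} \;=\; \prod_{\alpha \in R^+} \frac{[\langle \delta+\rho_n,\alpha^\vee\rangle]_q}{[\langle \rho_n,\alpha^\vee\rangle]_q},
\end{equation*}
a product whose factorisation into cyclotomic polynomials is governed by the coordinates $(\delta+\rho_n)_i$. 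Likewise $P|_{q}$ and $Q|_{q}$ must partition these cyclotomic factors.

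The main obstacle — and the reason the coprimality hypothesis is imposed — is to convert the combinatorial constraint $\max(P)\cdot\max(Q) = x^\delta$ into an arithmetic contradiction. Concretely, one must show that any non-trivial splitting of the above $q$-product into two $W_n$-invariant polynomial factors forces a common prime divisor among the parts of $\delta + \rho_n$. This requires tracking the cyclotomic content $\Phi_d(q)$ on each side: a factor $\Phi_d$ divides $[k]_q$ if and only if $d \mid k$, so a compatible bipartition of the cyclotomic factors among $P|_q$ and $Q|_q$ respecting the $W_n$-symmetry structure (via \Cref{Lem_head} applied to the homogeneous heads) produces a positive integer dividing every coordinate of $\delta+\rho_n$. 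The coprimality hypothesis then delivers the contradiction, completing the proof. This deep cyclotomic step is precisely the content of Rajan's argument in~\cite{Ra14}, and the safest course is to invoke his theorem verbatim rather than reproduce the entire machinery.
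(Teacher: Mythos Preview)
Your proposal is not a proof but a sketch that ends by invoking the very theorem you are asked to justify, so as written it is circular. More concretely, the intermediate route you outline has a genuine gap: the principal specialisation $x_i\mapsto q^{(\rho_n)_i}$ is not injective on the ring $\mathbb{C}^{W_n}[x_1^{\pm1},\ldots,x_n^{\pm1}]$, so a nontrivial factorisation $\fs_\delta^C=P\cdot Q$ may collapse after specialisation (one of $P|_q$, $Q|_q$ could become a unit in $\mathbb{C}[q^{\pm1}]$). Thus the cyclotomic bookkeeping you describe on the $q$-side places no constraint on the original factorisation. This is also not how Rajan's argument proceeds, so the sentence ``this deep cyclotomic step is precisely the content of Rajan's argument'' is inaccurate.

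The paper, for its part, does not reprove Rajan's theorem either; it quotes the general result of \cite{Ra14} and then, in the Remark immediately following the statement, carries out the short deduction needed to obtain the specific form of \Cref{Th_Raj}. That deduction is the substantive content here and is absent from your write-up: Rajan shows that $S_\delta^C:=a_{\delta+\rho_n}/D(\delta)$ is irreducible whenever $\delta+\rho_n$ is not a multiple of $\rho_n$ or $\widetilde{\rho}_n$, where $D(\delta)=\gcd(a_{d(\delta)\rho_n},a_{\widetilde{d}(\delta)\rho_n})$. The coprimality hypothesis forces $d(\delta)=1$; since $a_{\rho_n}$ divides $a_{k\rho_n}$ for every $k\geq1$ (the quotient being $\fs_{(k-1)\rho_n}^C$), one gets $D(\delta)=a_{\rho_n}$ and hence $S_\delta^C=\fs_\delta^C$. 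This two-line reduction is what you should replace your specialisation sketch with.
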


\begin{Rem}
In fact the irreducibility property proved in \cite{Ra14} is more
general.\ Let $d(\delta)$ be the greatest common divisor for the coordinates
of $\delta+\rho_{n}$ on the basis of the fundamental weights $\omega
_{1},\ldots,\omega_{n-1},\omega_{n}$. Equivalently, $d(\delta)$ is the gcd of
the coordinates of $\delta+\rho_{n}$ in the usual basis of $\mathbb{Z}^{n}%
$\footnote{For any $\beta=(\beta_{1},\ldots,\beta_{n})\in\mathbb{Z}$, we
indeed have $\beta=\sum_{i=1}^{n}a_{i}\omega_{i}$ where $a_{i}=\beta_{i}%
-\beta_{i+1}$ for $1\leq i\leq n-1$ and $\beta_{n}=a_{n}$.}. Define also
$\widetilde{d}(\delta)$ as the greatest common divisor for the coordinates of
$\delta+\rho_{n}$ on the weight $\mathbb{Q}$-basis $2\omega_{1},\ldots
,2\omega_{n-1},\omega_{n}$ (with the convention $\widetilde{d}(\delta)=1$ as
soon as we have a non integer coordinate). For any weight $\beta$, recall that
$a_{\beta}=\sum_{w\in W}\varepsilon(w)\sx^{w(\beta+\rho_{n})}$. Then set%
\[
D(\delta)=\gcd(a_{d(\delta)\rho_{n}},a_{\widetilde{d}(\delta)\rho_{n}})
\]
where the greatest common divisor is here considered in $\mathbb{C}^{W_{n}%
}[x_{1}^{\pm1},\ldots,x_{n}^{\pm 1}]$. It is proved in \cite{Ra14} that%
\[
S_{\delta}^{C}:=\frac{a_{\delta+\rho_{n}}}{D(\delta)}%
\]
is irreducible as soon as $\delta+\rho_{n}$ is not a multiple of $\rho_{n}$ or
$\widetilde{\rho}_{n}$. By our assumption in the previous theorem, the
coordinates of $\delta+\rho_{n}$ are relatively prime, therefore $d(\delta)=1$
and $\delta+\rho_{n}$ is not a multiple of $\rho_{n}$. Now, observe that
$a_{\rho_{n}}$ divides $a_{k\rho_{n}}$ for any integer $k\geq1$ because
$\frac{a_{k\rho_{n}}}{a_{\rho_n}}=\fs_{(k-1)\rho_{n}}^{C}$. This implies that
$D(\delta)=\gcd(a_{\rho_{n}},a_{\widetilde{d}(\delta)\rho_{n}})=a_{\rho_{n}}$
and $\fs_{\delta}^{C}=S_{\delta}^{C}$ is irreducible, as claimed.
\end{Rem}

\medskip

Rajan's irreducibility result \cite{Ra14} holds for any finite root
systems. It can be used to prove the second result by Rajan that we shall
need. Again, it holds for any root system but we shall only need it for
the Schur functions. 
\begin{Th}
\label{Th_Rajan2}Let $\lambda^{(1)},\ldots,\lambda^{(r)}$ and $\mu
^{(1)},\ldots,\mu^{(r)}$ be two sequences of partitions in $\cP_n$ such
that%
\[
\fs_{\lambda^{(1)}}(X)\times\cdots\times \fs_{\lambda^{(r)}}(X)=\fs_{\mu^{(1)}%
}(X)\times\cdots\times \fs_{\mu^{(r)}}(X).
\]
Then we have the multiset equality $\{\lambda^{(1)},\ldots,\lambda
^{(r)}\}=\{\mu^{(1)},\ldots,\mu^{(r)}\}$.
\end{Th}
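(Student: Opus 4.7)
The strategy is to follow Rajan's approach in \cite{Ra14}, combining the type $A$ analogue of \Cref{Th_Raj} with unique factorisation in the symmetric Laurent polynomial ring. First I would observe that the ambient ring $\C[x_{1}^{\pm1},\ldots,x_{n}^{\pm 1}]^{\mathfrak{S}_{n}}$ is a UFD, being the localisation of $\C[e_{1},\ldots,e_{n}]$ at the top elementary symmetric polynomial $e_{n}=x_{1}\cdots x_{n}$. In this UFD, the type $A$ analogue of \Cref{Th_Raj} asserts that the Schur polynomial $\fs_{\la}(X)$ is irreducible whenever the coordinates of $\la+\rho^{A}$, with $\rho^{A}=(n-1,n-2,\ldots,0)$, are coprime. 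For general $\la$, writing $d=d(\la)$ for the gcd of those coordinates, Rajan describes precisely how $\fs_{\la}$ factors into irreducibles inside the same ring, with factors controlled by the ``primitive'' weight associated to $\la/d$.

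Given this toolkit, the plan is as follows. Starting from the equality $\prod_{i}\fs_{\la^{(i)}}(X)=\prod_{j}\fs_{\mu^{(j)}}(X)$, I would factor both sides into irreducibles and compare the resulting multisets of irreducible factors. When all $d(\la^{(i)})$ and $d(\mu^{(j)})$ equal $1$, each Schur polynomial is itself irreducible, and the multiset equality $\{\la^{(i)}\}=\{\mu^{(j)}\}$ follows immediately from unique factorisation in a UFD. In the general case, one would invoke Rajan's combinatorial analysis to reconstruct each partition $\la^{(i)}$ from the multiset of its irreducible factors (together with the constraint that $\la^{(i)}$ be a partition, which pins down both the value of $d(\la^{(i)})$ and the primitive part) and similarly for each $\mu^{(j)}$.

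The main obstacle is exactly the reducible case $d(\la)>1$: distinct partitions can then produce overlapping sets of irreducible factors, and one must carefully track how these factors assemble back into each $\fs_{\la^{(i)}}$. The heart of Rajan's argument is to show that even when several $\la^{(i)}$ share a common primitive weight, the multiplicities of the irreducible factors together with the degree data determine each $\la^{(i)}$ uniquely. Thus the proof of \Cref{Th_Rajan2} reduces to this combinatorial reconstruction, which is the main content of \cite{Ra14}; concretely, one would either cite Rajan's result directly or adapt his case analysis to the setting of Schur polynomials in $n$ variables.
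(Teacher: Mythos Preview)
Your proposal is correct and aligns with the paper's treatment: the paper does not give its own proof of \Cref{Th_Rajan2} but presents it as a known result of Rajan (the tensor product unique decomposition of \cite{Ra04}, proved via the irreducibility criterion of \cite{Ra14}), and your sketch accurately outlines the argument behind that citation. In effect you have expanded what the paper leaves as a black-box reference.
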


One can observe that this result on Schur functions easily implies its analogue
on Weyl characters thanks to Lemma \ref{Lem_head} and the simple observation%
\[
\mathrm{head}(\fs_{\nu}^{C})=\fs_{\nu}(X)\text{ for all }\nu\in\mathcal{P}%
_{n}.
\]

\begin{Cor}
\label{Cor_rajtypeC}Consider $\lambda^{(1)},\ldots,\lambda^{(r)}$ and
$\mu^{(1)},\ldots,\mu^{(r)}$ two sequences of partitions in $\mathcal{P}_n$ such
that%
\[
\fs_{\lambda^{(1)}}^{C}\times\cdots\times \fs_{\lambda^{(r)}}^{C}%
=\fs_{\mu^{(1)}}^{C}\times\cdots\times \fs_{\mu^{(r)}}^{C}.
\]
Then we have the multiset equality $\{\lambda^{(1)},\ldots,\lambda
^{(r)}\}=\{\mu^{(1)},\ldots,\mu^{(r)}\}$.
\end{Cor}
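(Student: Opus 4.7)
The plan is to deduce the corollary from Theorem \ref{Th_Rajan2} by applying the operator $\mathrm{head}$ to both sides of the hypothesis. Since Lemma \ref{Lem_head} asserts that $\mathrm{head}$ is multiplicative on products, the whole argument reduces to establishing the single identity
$$\mathrm{head}(\fs_\nu^C) = \fs_\nu(X) \qu{for every} \nu \in \cP_n.$$
Once this is available, applying $\mathrm{head}$ to both sides of the given equality and invoking Lemma \ref{Lem_head} will turn the hypothesis into
$$\fs_{\lambda^{(1)}}(X) \times \cdots \times \fs_{\lambda^{(r)}}(X) = \fs_{\mu^{(1)}}(X) \times \cdots \times \fs_{\mu^{(r)}}(X),$$
and a direct appeal to Theorem \ref{Th_Rajan2} then yields the desired multiset equality.

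The real content of the proof is thus the identity $\mathrm{head}(\fs_\nu^C) = \fs_\nu(X)$. The highest weight vector of $V(\nu)$ contributes the monomial $\sx^\nu$, and every other weight $\beta$ appearing in $V(\nu)$ is obtained from $\nu$ by subtracting a nonnegative integer combination of positive roots of type $C_n$. Subtracting a root $\varepsilon_i - \varepsilon_j$ with $i<j$ preserves the coordinate sum $|\beta|$, while subtracting a root of the form $\varepsilon_i + \varepsilon_j$ or $2\varepsilon_i$ strictly decreases it. Consequently, $\max(\fs_\nu^C) = \sx^\nu$ (so $|\max(\fs_\nu^C)|=|\nu|$), and the monomials contributing to $\mathrm{head}(\fs_\nu^C)$ are exactly those indexed by weights obtained from $\nu$ using only simple roots from the $\mathfrak{gl}_n$-subsystem $A_{n-1}\subset C_n$.

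To match multiplicities, I would invoke Kostant's formula from \Cref{Kostant}: for any $\beta$ with $|\beta|=|\nu|$, the condition $|w\circ\nu-\beta|=0$ forces $w\in W_n$ to preserve the coordinate sum, hence to lie in the parabolic $\mathfrak{S}_n$ (no sign changes), and the Kostant partition function then only counts decompositions into $A_{n-1}$-positive roots. The resulting sum is exactly the $\mathfrak{gl}_n$-multiplicity of $\beta$ in the irreducible highest weight module $V^A(\nu)$, proving $\mathrm{head}(\fs_\nu^C) = \fs_\nu(X)$. There is no serious obstacle here: the entire argument rests on the elementary observation that the $C_n$-specific positive roots strictly decrease the coordinate sum, so no new ingredient beyond Rajan's theorem for Schur functions and basic weight combinatorics is required.
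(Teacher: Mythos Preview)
Your proposal is correct and follows exactly the approach indicated in the paper: apply $\mathrm{head}$ to both sides, use Lemma~\ref{Lem_head} for multiplicativity, invoke the identity $\mathrm{head}(\fs_\nu^C)=\fs_\nu(X)$, and conclude by Theorem~\ref{Th_Rajan2}. The only difference is that the paper states $\mathrm{head}(\fs_\nu^C)=\fs_\nu(X)$ as a ``simple observation'' without proof, whereas you supply a full justification via the coordinate-sum argument and Kostant's formula; this extra detail is correct and welcome.
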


Now recall the branching formula for the decomposition of a Schur function  $\fs_\nu(X^{\pm 1})$
 on the basis of the irreducible characters for $\mathfrak{sp}_{2n}$. 

\begin{Th}[{\cite[Appendix A]{FH91}}]
Let $\nu\in\mathcal{P}_{n}$. We have
$$\fs_{\nu}(X^{\pm1})=\sum_{\lambda
\in\mathcal{P}_{n},\delta\in\mathcal{P}_{n}^{(1,1)}}c_{\lambda,\delta}^{\nu
}\fs_{\lambda}^{C}.$$
where $\mathcal{P}_{n}^{(1,1)}$ is the subset of $\mathcal{P}_{n}$ of
partitions which can be tiled in vertical dominoes and $c_{\lambda,\delta
}^{\nu}$ is the usual Littlewood-Richardson coefficient.
\end{Th}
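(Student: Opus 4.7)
The plan is to derive this classical Littlewood branching rule through a character manipulation combining the Weyl character formulas in types $A_{2n-1}$ and $C_n$ with Littlewood's identity, in the determinantal spirit of Section~\ref{Sec_HoweDuality}.

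The starting point is Littlewood's identity
$$\sum_{\delta \in \mathcal{P}_n^{(1,1)}} \fs_\delta(X) = \prod_{1 \leq i \leq j \leq n} (1 - x_i x_j)^{-1},$$
together with the factorisation of Weyl denominators
$$a^{A_{2n-1}}_{\rho}(X^{\pm 1}) = \pm (\text{explicit monomial}) \cdot a^{C_n}_{\rho^C}(X) \cdot \prod_{1 \leq i \leq j \leq n}(1 - x_i x_j),$$
which one verifies directly by splitting the product $\prod_{1 \leq i < j \leq 2n}(x_i - x_j)$ defining $a^{A_{2n-1}}_{\rho}$ according to whether the indices lie in $\{1, \ldots, n\}$ or $\{n+1, \ldots, 2n\}$ after specialisation at $X^{\pm 1}$. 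Substituting into $\fs_\nu(X^{\pm 1}) = a^{A_{2n-1}}_{\nu + \rho}(X^{\pm 1}) / a^{A_{2n-1}}_{\rho}(X^{\pm 1})$ and applying Littlewood's identity yields
$$\fs_\nu(X^{\pm 1}) = \pm (\text{monomial}) \cdot \frac{a^{A_{2n-1}}_{\nu + \rho}(X^{\pm 1})}{a^{C_n}_{\rho^C}(X)} \cdot \sum_{\delta \in \mathcal{P}_n^{(1,1)}} \fs_\delta(X).$$

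The quotient $a^{A_{2n-1}}_{\nu + \rho}(X^{\pm 1}) / a^{C_n}_{\rho^C}(X)$ is a $W_n$-alternating ratio; after a monomial shift and the type-$C_n$ straightening of Theorem~\ref{straightening_C}, it expands as an integer combination $\sum_\mu b^\nu_\mu \fs_\mu^C(X)$ of symplectic irreducible characters. To identify this expansion with the claimed $\sum_{\lambda, \delta} c^\nu_{\lambda, \delta}\, \fs_\lambda^C$, I would apply the Cauchy-type formula $\fs_\nu(X, Y) = \sum_\eta \fs_\eta(X)\fs_{\nu/\eta}(Y)$ at $Y = X^{-1}$ together with the Littlewood-Richardson rule for the products $\fs_\mu \fs_\delta$, and then match coefficients of $\fs_\lambda^C$ on both sides of the master identity above.

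The main obstacle is precisely this coefficient-matching step: one must check that the vertical-domino tileability condition on $\delta$ is exactly what emerges from the product $\prod_{1 \leq i \leq j \leq n}(1 - x_i x_j)$ in the denominator ratio, and that the straightening in type $C_n$ does not introduce spurious signed contributions. A cleaner, more conceptual approach that bypasses this delicate combinatorial matching is via Howe duality for the reductive dual pair $(Sp_{2n}, O_{2k})$ acting on $S(\C^{2n} \otimes \C^{2k})$, whose joint $(Sp_{2n} \times O_{2k})$-decomposition produces this branching formula directly; this is the viewpoint developed by Howe in \cite{H95}.
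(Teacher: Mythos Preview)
The paper does not prove this theorem; it is quoted as a known result from \cite[Appendix A]{FH91} (this is Littlewood's classical branching rule for the restriction $\mathfrak{gl}_{2n}\downarrow\mathfrak{sp}_{2n}$) and is used in Section~8 only as an input. There is therefore no ``paper's own proof'' to compare against, and supplying one is not required.

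That said, your sketch follows the standard classical route and is broadly sound, with one slip and one genuine gap. The slip: for partitions tileable by \emph{vertical} dominoes (i.e.\ $\delta'$ has all even parts), Littlewood's identity reads
\[
\sum_{\delta\in\mathcal{P}_n^{(1,1)}}\fs_\delta(X)=\prod_{1\leq i<j\leq n}(1-x_ix_j)^{-1},
\]
with a strict inequality; the product $\prod_{i\leq j}$ you wrote corresponds instead to partitions with all \emph{rows} even. This matters, because the ratio of the $A_{2n-1}$ and $C_n$ Weyl denominators at $X^{\pm1}$ produces exactly $\prod_{i<j}(1-x_ix_j)$ (the diagonal factors $(1-x_i^2)$ are absorbed by the long-root contributions in the $C_n$ denominator), so with the correct identity the pieces do match. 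The gap you yourself flag --- the coefficient-matching after straightening --- is real but not deep: the clean way to close it is to multiply both sides by a second Cauchy kernel in dummy variables $y_1,\ldots,y_n$ and compare coefficients of $\fs_\lambda(Y)$, which is exactly the argument in \cite{FH91} and in Koike--Terada; your final remark about the dual pair $(Sp_{2n},O_{2k})$ is another valid route.
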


\begin{Rem}
\label{Rem_divide_equal}Observe that $\fs_{\nu}(X^{\pm
1})\neq \fs_{\nu}^{C}$ as soon as $\nu$ has at least two rows since
$c_{\lambda,(1,1)}^{\nu}>0$ for any $\lambda$ obtained from $\nu$ by removing
one box in two different rows. The previous decomposition can then be written%
\[
\fs_{\nu}(X^{\pm1})=\fs_{\nu}^{C%
}+\sum_{\lambda\vartriangleleft\nu}a_{\nu,\lambda}\fs_{\lambda}^{C}%
\]
where the coefficients $a_{\nu,\lambda}$ belong to $\mathbb{N}$ and
$\trianglelefteq$ is the dominant order on partitions. Assume that $\fs_{\nu
}^{C}$ divides $\fs_{\nu}(X^{\pm 1})$ in
$\mathbb{Z}[X^{\pm 1}]$. Then $\fs_{\nu}^{C}$ divides
$\sum_{\lambda\vartriangleleft\nu}a_{\nu,\lambda}\fs_{\lambda}^{C}$. But
$\max(\fs_{\nu}^{C})=\sx^{\nu}$ and $\max(\sum_{\lambda\vartriangleleft\nu
}a_{\nu,\lambda}\fs_{\lambda}^{C})=\sx^{\nu^{\flat}}$ where $\nu^{\flat}$ is
the partition obtained by decreasing by $1$ the two lowest nonzero parts of $\nu
$. Since $\sx^{\nu}>\sx^{\nu^{\flat}}$, this contradicts Lemma \ref{Lem_order}.
This shows that $\fs_{\nu}^{C}$ divides $\fs_{\nu}(X^{\pm 1})$ in $\mathbb{Z}[X^{\pm 1}]$ if and only
if $\fs_{\nu}^{C}=\fs_{\nu}(X^{\pm1})$.
\end{Rem}

Now let us come back to\ Problem \ref{Problem} and assume that the induction of $V^{\bX^\ast}_{\bm^\ast}(\wh{\bmu})$ and $V^{\bX^\ast}_{\bm^\ast}(\wh{\bnu})$ to $\mathfrak{sp}_{2m}$ are isomorphic. This can be rewritten, thanks to our main theorem in the last section as:
$$\fs^{\bX,\bm}_{\bmu[n]} = \fs_{\bnu[n]}^{(\bX,\bm)}\qu{ for all $n\geq\max(n_{\widehat{\bmu}},n_{\widehat{\boldsymbol{\nu}}}$).}$$
Assume first that all the components of $\bX$ are of type $C$. We get%
\begin{equation}
\fs_{\mu^{(1)}[n]}^{C}\times\cdots\times \fs_{\mu^{(r)}[n]}^{C}%
=\fs_{\nu^{(1)}[n]}^{C}\times\cdots\times \fs_{\nu^{(r)}[n]}^{C}
\label{equal_product}%
\end{equation}
where for any $j=1,\ldots,r,$ the partitions $\mu^{(j)}[n]$ and $\nu^{(j)}[n]$
have at most $m_{j}$-columns. Here we can apply Corollary \ref{Cor_rajtypeC}
and deduce the multiset equality 
$\{\mu^{(1)}[n],\ldots,\mu^{(r)}[n]\}=\{\nu^{(1)}[n],\ldots,\nu^{(r)}[n]\}$.

\medskip

Alternatively, we can choose $n$ so that
each partition $\mu^{(j)}[n]$ and $\nu^{(j)}[n]$ starts with two parts equal
to $m_{j}$. Then, the two first components in $\mu^{(j)}[n]+\rho_{n}$ or
$\nu^{(j)}[n]+\rho_{n}$ are equal to $m_{j}+n$ and $m_{j}+n-1$,
respectively.\ In particular, this implies that the coordinates of $\mu
^{(j)}[n]+\rho_{n}$ and $\nu^{(j)}[n]+\rho_{n}$ are relatively prime. We thus
get by Theorem \ref{Th_Raj} that each character $\fs_{\mu^{(j)}[n]}^{C}$ or
$\fs_{\nu^{(j)}[n]}^{C}$ are irreducible. Thus we recover the multiset
equality $\{\mu^{(1)}[n],\ldots,\mu^{(r)}[n]\}=\{\nu^{(1)}[n],\ldots,\nu
^{(r)}[n]\}$.\ It implies that the $r$-partitions $\widehat{\bmu}$ and
$\widehat{\boldsymbol{\nu}}$ coincide up to permutation of their partitions.

\medskip

Next, assume $\bX=(C,A,\ldots,A)$. This time we get
\begin{equation}
\fs_{\mu^{(1)}[n]}^{C}\times \fs_{\mu^{(2)}[n]}(X^{\pm1})\times\cdots\times
\fs_{\mu^{(r)}[n]}(X^{\pm1})=\fs_{\nu^{(1)}[n]}^{C}\times \fs_{\nu^{(2)}%
[n]}(X^{\pm1})\times\cdots\times \fs_{\nu^{(r)}[n]}(X^{\pm1}).
\label{Equal_Levi}%
\end{equation}
Write $N=\left\vert \mu^{(1)}[n]\right\vert +\cdots+\left\vert \mu
^{(r)}[n]\right\vert $ the sum of all parts in the partitions $\mu
^{(j)}[n],j=1,\ldots,r$ (that is the rank of the $r$-partition
$\bmu[n]$).\ For any partition $\kappa\in\mathcal{P}_{n}$, we have
$$\max(\fs_{\kappa}^{C})=\max(\fs_{\kappa}(X))=\sx^{\kappa}\qu{ and }\mathrm{head}%
(\fs_{\kappa}^{C})=\fs_{\kappa}(X).$$ 
Then Lemma \ref{Lem_head} implies that
for any partition $\nu$ of rank $N$, the coordinates of 
$$\fs_{\mu^{(1)}(n)}^{C}\times \fs_{\mu^{(2)}(n)}(X^{\pm1})\times\cdots\times \fs_{\mu
^{(r)}(n)}(X^{\pm1})$$
on the irreducible character $\fs_{\nu}^{C}$ coincides
with the coordinates of 
$$\fs_{\mu^{(1)}(n)}(X)\times \fs_{\mu^{(2)}(n)}(X)\times\cdots\times \fs_{\mu^{(r)}(n)}(X)$$
on the Schur function $\fs_{\nu}(X)$.
Equation (\ref{Equal_Levi}) then gives 
\[
\fs_{\mu^{(1)}(n)}(X)\times \fs_{\mu^{(2)}(n)}(X)\times\cdots\times \fs_{\mu
^{(r)}(n)}(X)=\fs_{\nu^{(1)}(n)}(X)\times \fs_{\nu^{(2)}(n)}(X)\times\cdots\times
\fs_{\nu^{(r)}(n)}(X)
\]
and we obtain $\{\mu^{(1)}[n],\ldots,\mu^{(r)}[n]\}=\{\nu^{(1)}[n],\ldots,\nu^{(r)}[n]\}$
  by Theorem \ref{Th_Rajan2}.

\medskip

Having in hand the multiset equality $\{\mu^{(1)}[n],\ldots,\mu^{(r)}%
[n]\}=\{\nu^{(1)}[n],\ldots,\nu^{(r)}[n]\}$ and equation (\ref{Equal_Levi}%
), we deduce that

\begin{enumerate}
\item Either $\mu^{(1)}[n]=\nu^{(1)}[n]$ and $\{\mu^{(2)}[n],\ldots,\mu
^{(r)}[n]\}=\{\nu^{(2)}[n],\ldots,\nu^{(r)}[n]\}$,

\item or $\mu^{(1)}[n]=\nu^{(p)}[n],$ $\mu^{(q)}[n]=\nu^{(1)}[n]$ with $p>1$
and $q>1$ and 
$$\{\mu^{(2)}[n],\ldots,\mu^{(r)}[n]\}\backslash\{\mu
^{(q)}[n]\}=\{\nu^{(2)}[n],\ldots,\nu^{(r)}[n]\}\setminus\{\nu^{(p)}[n]\}.$$
\end{enumerate}

In Case (1), since 
$$\wh{\bmu[n]} = \wh{\bmu}\qu{and}\begin{cases}
\wh{\bmu[n]} = (\wh{\mu^{(r)}[n]},\ldots,\wh{\mu^{(1)}[n]})\\
 \wh{\bmu} = (\wh{\mu}_{(1)},\ldots,\wh{\mu}_{(r)})
\end{cases}$$ we get that $\wh{\mu}_{(r)} = \wh{\nu}_{(r)}$ and  $\{\wh{\mu}^{(1)},\ldots,\wh{\mu}^{(r-1)}\} = \{\wh{\nu}^{(1)},\ldots,\wh{\nu}^{(r-1)}\}$.

\medskip

In Case (2), by simplifying the
identical factors in (\ref{Equal_Levi}), we get%
\[
\fs_{\mu^{(1)}[n]}^{C}\times \fs_{\mu^{(q)}[n]}(X^{\pm1})=\fs_{\mu^{(q)}%
[n]}^{C}\times \fs_{\mu^{(1)}[n]}(X^{\pm1}).
\]
As before, we can choose the integer $n$ so that $\fs_{\mu^{(1)}[n]}^{C%
}$ and $\fs_{\nu^{(1)}[n]}^{C}=\fs_{\mu^{(q)}[n]}^{C}$ are irreducible
polynomials, and the partitions $\mu^{(1)}[n]$ and $\nu^{(1)}[n]$ have their
two first parts equal to $m_{1}$. This implies that $\fs_{\mu^{(1)}[n]}^{C}$
divides $\fs_{\mu^{(1)}[n]}(X^{\pm1})$. But, by using Remak
\ref{Rem_divide_equal}, this is only possible when $\fs_{\mu^{(1)}[n]}^{C%
}=\fs_{\mu^{(1)}[n]}(X^{\pm1})$, that is when $\mu^{(1)}[n]$ is a row. We thus
get a contradiction since $\mu^{(1)}[n]$ has at least two parts equal to
$m_{1}$.

\medskip

Finally, we have proved the following theorem.

\begin{Th}
Assume that the induction of $V^{\bX^\ast}_{\bm^\ast}(\wh{\bmu})$ and $V^{\bX^\ast}_{\bm^\ast}(\wh{\bnu})$ to $\mathfrak{sp}_{2m}$ are isomorphic. Then the following holds.
\begin{enumerate}
\item The $r$-partitions $\widehat{\bmu}$ and $\widehat
{\boldsymbol{\nu}}$ coincide up to permutation of their partitions when all
the components in $\bX$ are of type $C$
\item We have $\widehat{\mu}^{(r)}=\widehat{\nu}^{(r)}$ and $\widehat
{\bmu}$ and $\widehat{\boldsymbol{\nu}}$ coincide up to
permutation of their other partitions when $\bX$ is a parabolic Dynkin
subdiagram (i.e. $X_{1}$ is the unique component equal to $C$)
\end{enumerate}
\end{Th}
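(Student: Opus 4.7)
The plan is to translate the question about induction into a question about products of characters via \Cref{generalised_duality}, and then apply Rajan's factorisation results to extract the matching of partitions.

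First I would note that isomorphic induced $\mathfrak{sp}_{2m}$-modules have the same branching multiplicities with every simple $\mathfrak{sp}_{2m}$-module. By \Cref{generalised_duality}, this is equivalent to the character identity
\[
\fs^{(\bX,\bm)}_{\bmu[n]} = \fs^{(\bX,\bm)}_{\bnu[n]}
\]
in the character ring of $\mathfrak{sp}_{2n}$ for every $n \geq \max(n_{\widehat{\bmu}}, n_{\widehat{\bnu}})$. A crucial freedom is that $n$ may be taken arbitrarily large: by (\ref{n_trick}), increasing $n$ just prepends parts equal to $m_j$ to $\mu^{(j)}[n]$ and $\nu^{(j)}[n]$. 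I would choose $n$ so that every $\mu^{(j)}[n]$ and $\nu^{(j)}[n]$ begins with two parts equal to $m_j$; this makes the first two coordinates of $\mu^{(j)}[n]+\rho_n$ consecutive, hence relatively prime, so \Cref{Th_Raj} guarantees that each $\fs^C_{\mu^{(j)}[n]}$ and $\fs^C_{\nu^{(j)}[n]}$ is irreducible in $\mathbb{Z}^{W_n}[x_1^{\pm 1},\ldots,x_n^{\pm 1}]$.

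For Case (1), where $\bX = (C,\ldots,C)$, the identity above is an equality of products of irreducible type $C$ characters, so \Cref{Cor_rajtypeC} directly yields the multiset equality $\{\mu^{(1)}[n],\ldots,\mu^{(r)}[n]\} = \{\nu^{(1)}[n],\ldots,\nu^{(r)}[n]\}$, which translates immediately into the claim on $\widehat{\bmu}$ and $\widehat{\bnu}$.

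Case (2), with $\bX=(C,A,\ldots,A)$, is the more delicate step and I expect it to be the main obstacle. Here the type $A$ factors contribute the non-irreducible characters $\fs_{\mu^{(j)}[n]}(X^{\pm 1})$, so I would first extract the top-degree component using \Cref{Lem_head} together with $\mathrm{head}(\fs^C_\kappa) = \mathrm{head}(\fs_\kappa(X^{\pm 1})) = \fs_\kappa(X)$, reducing the identity to a product identity of ordinary Schur polynomials; then \Cref{Th_Rajan2} yields the multiset equality of the underlying partitions. The remaining issue is to show that the distinguished $C$-factor is preserved, i.e.\ $\mu^{(1)}[n]=\nu^{(1)}[n]$. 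If instead $\mu^{(1)}[n]=\nu^{(p)}[n]$ with $p>1$ and $\mu^{(q)}[n]=\nu^{(1)}[n]$ with $q>1$, cancelling the identical factors produces
\[
\fs^C_{\mu^{(1)}[n]} \times \fs_{\mu^{(q)}[n]}(X^{\pm 1}) = \fs^C_{\mu^{(q)}[n]} \times \fs_{\mu^{(1)}[n]}(X^{\pm 1}).
\]
Since both type $C$ factors are irreducible by our choice of $n$, $\fs^C_{\mu^{(1)}[n]}$ must divide $\fs_{\mu^{(1)}[n]}(X^{\pm 1})$, and \Cref{Rem_divide_equal} then forces $\mu^{(1)}[n]$ to be a single row, contradicting the presence of two parts equal to $m_1$. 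Consequently $\widehat{\mu}^{(r)} = \widehat{\nu}^{(r)}$, and the remaining partitions also agree as a multiset, which gives the desired statement.
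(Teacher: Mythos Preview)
Your proposal is correct and follows essentially the same route as the paper: translate via \Cref{generalised_duality} to the character identity $\fs^{(\bX,\bm)}_{\bmu[n]}=\fs^{(\bX,\bm)}_{\bnu[n]}$, enlarge $n$ using (\ref{n_trick}) so that each $\mu^{(j)}[n],\nu^{(j)}[n]$ begins with two parts $m_j$, apply \Cref{Cor_rajtypeC} in the all-$C$ case, and in the parabolic case take heads via \Cref{Lem_head} to reduce to Schur products, invoke \Cref{Th_Rajan2}, then rule out the swap $\mu^{(1)}[n]=\nu^{(p)}[n]$, $\mu^{(q)}[n]=\nu^{(1)}[n]$ by the irreducibility/divisibility argument and \Cref{Rem_divide_equal}. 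The only cosmetic point is that in the swap step you should note (as the paper does implicitly) that one may assume $\mu^{(1)}[n]\neq\mu^{(q)}[n]$, since otherwise one can rematch to land in the non-swap case.
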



\end{document}